\theoremstyle{plain}
\newtheorem{thm}{Theorem}[section]
\newtheorem{theorem}[thm]{Theorem}
\newtheorem{lemma}[thm]{Lemma}
\theoremstyle{definition}
\newtheorem{remark}[thm]{Remark}
\newtheorem{definition}[thm]{Definition}
\newtheorem{defn-thm}[thm]{Definition-Theorem}
\begin{document}
	\title{A module-theoretic interpretation of quantum expansion formula}
	\author{Yutong Yu}
	\address{Department of Mathematical Sciences\\
		Tsinghua University\\
		Beijing 100084, P.~R.~China}
	\email{yyt20@mails.tsinghua.edu.cn(Y. Yu)}
\begin{abstract}
	We provide a module-theoretic interpretation of the expansion formula given by Huang (2022), which defines a map on perfect matchings to compute the expansion of quantum cluster variables in quantum cluster algebras arising from unpunctured surfaces. In addition, we present a multiplication formula for string modules with one-dimensional extension space, derived using the skein relations. For the Kronecker type, an alternative expansion formula was given in Çanakçı and Lampe (2020), and we show that the two expansion formulas coincide.
\end{abstract}

\maketitle

\textbf{Keywords:} Quantum cluster algebras; string modules; perfect matchings; canonical submodules; multiplication formulas.

\textbf{MSC (2020):} 13F60; 16G20.

\section{INTRODUCTION}
\par Cluster algebras were introduced by Fomin and Zelevinsky around 2000 in \cite{Fomin2001ClusterAI}. Later, Berenstein and Zelevinsky introduced quantum cluster algebras in \cite{BERENSTEIN2005405}. The theory of cluster algebras is related to many other branches of mathematics, such as Lie theory, representation theory of algebras, Teichmüller theory and mathematical physics.
\par Cluster algebras coming from marked surfaces were studied in \cite{10.1007/s11511-008-0030-7}. In this paper, we study surfaces without punctures. Consider a compact connected oriented 2-dimensional bordered Riemann surface $S$ and a finite set of marked points $M$ lying on the boundary of $S$ with at least one marked point on each boundary component. In \cite{10.1007/s11511-008-0030-7}, a cluster algebra $\mathcal{A}(S,M)$ corresponds to the marked surface $(S,M)$ with a triangulation $\Gamma$. The main result is that there is a bijection $\gamma \mapsto x_\gamma$ between the internal arcs in $S$ and the cluster variables in $\mathcal{A}(S,M)$, furthermore triangulations are in bijection with clusters and flips correspond to mutations. Later, \cite{Schiffler2007OnCA} and \cite{SCHIFFLER20101885} invented the notion of a complete path and gave an expression of $x_\gamma$ for the cluster algebra $\mathcal{A}(S,M)$ with principal coefficients. Later \cite{Musiker2008ClusterEF} gave an expansion formula using perfect matchings which is essentially the same. The expansion formula was then generalized to marked surfaces in \cite{MUSIKER20112241}. The quantum cluster algebras arising from a marked surface are natural, and all the results have their counterparts in the quantum case. Then Huang gave the expansion formula for quantum cluster variables in \cite{Huang2021AnEF}.
\par The Jacobian algebras associated with the quivers with potential given by the triangulations of surfaces are gentle algebras. The module categories of gentle algebras are studied in \cite{10.2140/ant.2010.4.201} and the cluster category $C$ of a quiver with potential is studied in \cite{Amiot2008ClusterCF}. A correspondence between (closed) curves on surfaces and objects in cluster categories was given by \cite{zbMATH06000314}. Specifically, the string objects in $C$ are indexed by curves and can be identified with string modules in the gentle algebra associated with a triangulation. Palu \cite{Palu2008} defines a cluster character for any object in some 2-Calabi-Yau category. Later, Qin \cite{Qin_2012} gives quantum Laurent polynomials for all cluster variables using Serre polynomials.
\par Then the connection between expansion formulas for arcs and cluster characters of modules should be studied as they both provide Laurent expansion formulas of cluster variables in the same cluster algebra. This question is addressed through a comparison between perfect matchings and canonical submodules in \cite{CANAKCI2021102094}. In the quantum case, Huang introduced an expansion formula for arcs, which is defined recursively on the lattice of perfect matchings of a snake graph. The connection between perfect matchings and canonical submodules still holds. The problem is how to interpret both the recursions themselves and the values arising from them in terms of canonical submodules.
\par The aim of this paper is to provide a module-theoretic interpretation of Huang's expansion formula. The key point is to find a way to define the function $v_\gamma$ on canonical submodules. Huang defined a function $v$ on perfect matchings by an initial condition and a recursive formula. His formula depends on counting the number of edges with some fixed label. The correspondence between perfect matchings and canonical submodules in \cite{CANAKCI2021102094} could give a natural way to define $v_\gamma$ by counting the vertices of canonical submodules with some fixed label. Specifically, let $\gamma$ be an arc in $(S,M)$ with string $w$. Let $M(w)$ be the corresponding string module and $CS(M(w))$ be the set of canonical submodules.
\begin{theorem}
	There exists a map $v_\gamma:CS(M(w))\rightarrow \mathbb{Z}$ such that
	\begin{enumerate}
		\item $v_\gamma(0)=0$.
		\item If two submodules $N$ and $N'$ satisfy $N=N'\cup {x_j}$ for some index $j$, then
		\begin{equation*}
			v_\gamma(N)-v_\gamma(N')=\Omega'(x_j,N)
		\end{equation*}
	\end{enumerate}
\end{theorem}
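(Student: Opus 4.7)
The plan is to transfer Huang's function on perfect matchings to canonical submodules via the bijection of Çanakçı--Schroll in \cite{CANAKCI2021102094}, and then recast Huang's recursion (formulated in terms of edges of the snake graph) in the vertex-language of canonical submodules.

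First I would recall the bijection $\phi:\mathrm{PM}(G_\gamma)\xrightarrow{\sim} CS(M(w))$, where $G_\gamma$ is the snake graph of $\gamma$. Under $\phi$, the minimal matching $P_-$ corresponds to the zero submodule, and the covering relations in $CS(M(w))$ (adjoining a single vertex $x_j$) correspond bijectively to twists of a perfect matching at a single tile $G_j$ of the snake graph. With this dictionary in hand, I would simply set $v_\gamma := v\circ\phi^{-1}$, where $v$ is Huang's map from \cite{Huang2021AnEF}. The initial condition $v_\gamma(0)=0$ is then immediate from $v(P_-)=0$.

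The substance of the theorem is the recursion. For $N=N'\cup\{x_j\}$, write $P=\phi^{-1}(N)$ and $P'=\phi^{-1}(N')$. By the previous paragraph these two matchings differ by a twist at the single tile $G_j$, and Huang's recursion expresses $v(P)-v(P')$ as a signed count of edges incident to $G_j$ carrying prescribed labels from the initial triangulation. What has to be shown is that this edge count equals $\Omega'(x_j,N)$, the quantity defined on the module side from the local structure of $M(w)$ at $x_j$ and from which predecessors and successors of $x_j$ belong to $N$.

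The main obstacle will be carrying out this translation case by case. The edge count on Huang's side depends on the shape of the snake graph near $G_j$ (whether the neighbouring tiles are glued horizontally or vertically, and on the orientations of the shared diagonals) together with how $P'$ matches the boundary edges of these neighbouring tiles. Each such local configuration corresponds, under $\phi$, to a specific arrangement of arrows at the vertex $x_j$ in the string $w$ and to a specific pattern of adjacent vertices lying in $N$ or not. A finite case analysis, organised by this local geometry and already implicit in the compatibility between tile-twists and vertex-additions proved in \cite{CANAKCI2021102094}, should match each labelled edge of $G_j$ on Huang's side with the corresponding summand of $\Omega'(x_j,N)$ on the module side, thereby establishing the recursion and completing the proof.
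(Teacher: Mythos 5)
Your overall strategy---transporting Huang's map $v$ through the matching--submodule bijection, so that existence and well-definedness of $v_\gamma$ come for free, and then verifying that Huang's twist increment agrees with $\Omega'$---is exactly the route the paper takes, and the dictionary you invoke is correct: if $N=N'\cup\{x_j\}$ then the corresponding matchings differ by a twist at the tile with diagonal $x_j$, the minimal matching corresponds to the zero submodule, and membership $j\in P$ is equivalent to the condition on the edges labelled $a_{2_j},a_{4_j}$ that fixes the sign in Huang's $\Omega$.

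The gap is in your description of what actually has to be compared. Huang's increment $\Omega(p_j,P)$ is not ``a signed count of edges incident to $G_j$'': by Definition \ref{d1} it involves $n^{\pm}_{p_j}(\tau_{i_j},P)$, the number of edges labelled $\tau_{i_j}$ of $P$ lying anywhere in the subgraphs $G^{\pm}_{p_j}$ formed by \emph{all} tiles after/before $G_j$, together with the counts $m^{\pm}_{p_j}$ of diagonals labelled $\tau_{i_j}$ in those subgraphs. Edges labelled $\tau_{i_j}$ occur on every tile whose diagonal is $\tau_{i_j}$ itself or one of its quiver neighbours, and such tiles can be arbitrarily far from $G_j$; correspondingly $\Omega'(x_j,N)$ contains the summands $n(\tau_{i_j},i,P)$ for \emph{every} position $i$, not just $i=j\pm1$. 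So a case analysis ``organised by the local geometry near $G_j$'' cannot yield the identity $\Omega(j,P)=\Omega'(x_j,N)$. What is needed---and what the paper's sequence of lemmas does---is to run the local analysis at every position of the string where a $\tau_{i_j}$-labelled edge can appear (the four direct/inverse configurations at an interior occurrence of $\tau_{i_j}$, the configuration where two consecutive vertices are quiver neighbours of $\tau_{i_j}$, and the special first/last-tile cases), express each local count purely in terms of which nearby indices lie in the index set, and then sum to obtain $N^{\pm}(\tau_{i_j},j,P)=n^{\pm}_{p_j}(\tau_{i_j},P)$ and $M^{\pm}(\tau_{i_j},j,w)=m^{\pm}_{p_j}(\tau_{i_j},\gamma)$. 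Your plan becomes the paper's proof once the case analysis is globalised in this way, but as stated the key step would fail.
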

We show that the map $v_\gamma$ coincides with the map $v$ in \cite{Huang2021AnEF}. As a consequence, we obtain the following result.
\begin{theorem}
	The quantum cluster variable associated with $M(w)$ can be expressed by
	\begin{equation*}
		X_{M(w)}=\sum_{N\in CS(M(w))}q^{\frac{v_\gamma(N)}{2}}X^\Gamma(N)
	\end{equation*}
\end{theorem}
\par Cluster multiplication formulas play an important role in cluster theory and have attracted significant attention. 
In the context of acyclic cluster algebras, Sherman and Zelevinsky~\cite{zbMATH02150997} first established such formulas for rank~2 cluster algebras of finite and affine types, which were later extended to the affine type $A_2^{(1)}$ by Cerulli~\cite{zbMATH06119057}. 
Caldero and Keller~\cite{CalderoKeller2008} then constructed cluster multiplication formulas for finite types. 
This result was subsequently generalized to acyclic types by Xiao and Xu~\cite{XiaoXu2010}, as well as Xu~\cite{zbMATH05678490}. 
In the framework of acyclic quantum cluster algebras, Ding and Xu~\cite{zbMATH06102811} first obtained cluster multiplication formulas for the Kronecker case. 
Later, Bai, Chen, Ding, and Xu~\cite{zbMATH07074193} extended this result to the affine type $A_2^{(2)}$.

\par The relationship between skein algebras and quantum cluster algebras was established in \cite{Muller2016}. In this framework, the multiplication formulas admit a natural interpretation via the bijection between arcs and cluster variables. We explicitly describe the multiplication formulas by identifying the strings of the modules appearing in the resulting expressions.
\par For the Kronecker case, \cite{CANAKCI2020105132} provided an expansion formula for quantum cluster variables in the corresponding quantum cluster algebra using a different approach. They introduce a direct method to compute the map $v$ via symmetric difference. Although the two maps are not identical, it can be shown that they yield the same expansion formula.
\section{REVIEW OF CLUSTER ALGEBRA}
We will work over a fixed algebraically closed field $K$. Let $q$ be a formal variable, $\widetilde{B}=(b_{ij})$ be a $m\times n$ matrix with full rank, and $\Lambda$ be a skew-symmetric matrix satisfying
\begin{equation}
	\Lambda \widetilde{B}=-\begin{bmatrix}
		D \\0
	\end{bmatrix} \nonumber
\end{equation}
where $D$ is a diagonal matrix with positive entries. We call $(\widetilde{B},\Lambda)$ a compatible pair. The submatrix $B=(b_{ij})$ with ${1\leq i\leq n,1\leq j\leq n}$ is called the principal part of $\widetilde{B}$.
\begin{definition}
	Let $L$ be a lattice of rank $m$ with a basis $\left\{e_i|1\leq i\leq m \right\}$ with a skew-symmetric bilinear form $\Lambda$. The $quantum \ torus$ $\mathcal{T}=\mathcal{T}(L,\Lambda)$ is the $\mathbb{Z}[q^{\pm \frac{1}{2}}]$-algebra generated by $X^g$, $g\in L$ subject to the relation
	\begin{equation*}
		X^gX^h=q^{\Lambda(g,h)/2}X^{g+h}
	\end{equation*}
\end{definition}
\begin{remark}
	The quantum torus admits an involutive $\mathbb{Z}[q^{\pm \frac{1}{2}}]$-algebra automorphism, called the $\mathbf{bar \ involution}$, induced by $q\mapsto q^{-1}$ and $X^a\mapsto X^a$ for all $a\in L$. We say an element $f$ is $\mathbf{bar}$-$\mathbf{invariant}$ if $f$ is invariant under bar involution.
\end{remark}
\par An initial $quantum \ seed$ is a triple $(\Lambda,\widetilde{B},X)$ such that the pair $(\widetilde{B},\Lambda)$ is compatible and $X=\left\{X_1,...,X_m\right\}$ where $X_i=X^{e_i}$ for $1\leq i\leq m$.
\par For any $1\leq k\leq n$, we can define the $mutation$ of the quantum seed in direction $k$. Denoted by $\mu _k(\Lambda,\widetilde{B},X)=(\Lambda^{\prime},\widetilde{B}^{\prime},X^{\prime})$, where
\begin{enumerate}
	\item \begin{equation*}
		\Lambda'_{ij}=\left\{\begin{aligned}
			&\Lambda_{ij},      &\mbox{if $i,j\neq k$}\\
			&\Lambda(e_i,-e_k+\sum_{l}{[b_{lk}]_+}e_l), &i\neq k=j \\
		\end{aligned}
		\right.
	\end{equation*}
	\item \begin{equation*}
		b^{\prime}_{ij}=\begin{cases}
			-b_{ij} &\mbox{if $i=k$ or $j=k$}\\
			b_{ij}+[b_{ik}]_+b_{kj}+b_{ik}[-b_{kj}]_+ &otherwise
		\end{cases}
	\end{equation*}
	where $\widetilde{B}=(b_{ij})$ and $\widetilde{B}^{\prime}=(b^{\prime}_{ij})$.
	\item $X'=\left\{X'_1,...,X'_m\right\}$ is given by
	\begin{equation*}
		X'_k=X^{-e_k+\sum_{i}{\left[b_{ik}\right]_+e_i}}+X^{-e_k+\sum_{i}{\left[-b_{ik}\right]_+e_i}}
	\end{equation*}
	where $X'_i=X_i$ for $i\neq k$.
\end{enumerate}
It is proved in \cite{BERENSTEIN2005405} that the mutation here can be applied on the new seed on any direction. If we collect all the seeds which can be obtained from a finite sequence of mutations on the initial seed, we can build an $n$-regular tree $\mathbb{T}_n$ where the vertices correspond to the quantum seeds and edges correspond to mutations in some direction. We denote $t_0$ the vertex of the initial seed. For any vertex $t$ in $\mathbb{T}_n$, the quantum seed $X(t)=(X_1(t),\cdots,X_m(t))$ can be obtained by a finite sequence of mutations in direction $k_1,\cdots,k_s$ and we denote by $t=\mu_{k_s}\cdots \mu_{k_1}(t_0)$.
\begin{definition}
	The quantum cluster algebra $\mathcal{A}_q$ is a subalgebra of $\mathcal{T}$ generated by $X_i(t)$ for any $1\leq i\leq m$ and $t\in \mathbb{T}_n$.
\end{definition}
By definition, all $X_i(t)$ are equal for any $n+1\leq i\leq m$ and $t\in \mathbb{T}_n$ and let $X_{n+1},\cdots,X_m$ denote the common value.
\begin{enumerate}
	\item $X_i(t)$ is called a quantum cluster variable for $1\leq i\leq n$ and $t\in \mathbb{T}_n$.
	\item $X_i$ is called a frozen variable for $n+1\leq i\leq m$.
	\item $X(t)$ is called a cluster.
	\item For any $t\in \mathbb{T}_n$ and $a\in \mathbb{N}^m$, 
	\begin{equation*}
		X(t)^a=q^\frac{-\sum_{1\leq i<j\leq m}\Lambda(a_ie_i,a_je_j)}{2}X_1(t)^{a_1}X_2(t)^{a_2}\cdots X_m(t)^{a_m}
	\end{equation*}
	is called a quantum cluster monomial.
\end{enumerate}
\begin{remark}
	When $q=1$, the above recovers the classical cluster algebra in \cite{Fomin2001ClusterAI}. We replace $X$ with $x$ to serve as the counterpart in classical cluster algebra.
\end{remark}
\begin{theorem}[\cite{BERENSTEIN2005405}]
	Any quantum cluster variable is a Laurent polynomial in $X_i(t), 1\leq i\leq m$ for some fixed $t\in \mathbb{T}_n$.
\end{theorem}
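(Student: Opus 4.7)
The plan is to adapt the classical proof of the Laurent phenomenon from \cite{Fomin2001ClusterAI} to the quantum setting, following the strategy carried out in \cite{BERENSTEIN2005405}. Fix the initial seed $(\Lambda,\widetilde{B},X)$ at $t_0$, and write $\mathcal{L}:=\mathbb{Z}[q^{\pm 1/2}][X_1^{\pm 1},\ldots,X_m^{\pm 1}]$ for the quantum Laurent polynomial ring over the initial variables. The goal is to prove $X_i(t)\in \mathcal{L}$ for every $t\in\mathbb{T}_n$ and every $1\leq i\leq m$.

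I would proceed by induction on the distance $d(t_0,t)$ in $\mathbb{T}_n$. The cases $d(t_0,t)\leq 1$ are immediate from the mutation formula, which writes each mutated variable $X'_k$ as a binomial in $\mathcal{L}$. For the inductive step the main tool is a quantum caterpillar lemma: at a vertex $t$ of distance $s+1$, choose a shortest path $t_0,t_1,\ldots,t_{s+1}=t$ and consider, alongside each cluster $X(t_j)$, the clusters obtained from $t_j$ by one further mutation in each direction. The inductive hypothesis gives Laurent expressions along this "caterpillar" of overlapping clusters, and by combining the pairwise exchange relations one pins down $X_i(t)$ as an element of $\mathcal{L}$. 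The compatibility condition $\Lambda\widetilde{B}=-\begin{bmatrix} D\\ 0\end{bmatrix}$ is what makes the combination work: it forces the two monomial summands in each exchange relation to $q$-commute up to a controlled power of $q$, so the right-hand side of every mutation relation is bar-invariant and the substitutions glue together without introducing fractional $q^{1/2}$-powers.

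The main obstacle is managing the non-commutativity of the quantum torus: every time one reorders a product, a factor of $q^{\Lambda(\cdot,\cdot)/2}$ appears, and the inductive argument must verify that all of these corrections are absorbed into the integer $q^{\pm 1/2}$ coefficients permitted in $\mathcal{L}$. The compatibility hypothesis is precisely what forces these corrections to align. A cleaner way to organise the bookkeeping is to introduce the \emph{quantum upper cluster algebra} $\mathcal{U}_q=\bigcap_{t'\in\mathbb{T}_n} \mathbb{Z}[q^{\pm 1/2}][X_i(t')^{\pm 1}]$ and first show that $\mathcal{A}_q\subseteq \mathcal{U}_q$; then, exploiting the full-rank assumption on $\widetilde{B}$, reduce the intersection to one taken only over the finitely many clusters adjacent to $t_0$. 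The resulting two-step verification bypasses the combinatorial complexity of iterated mutations and delivers the desired inclusion into $\mathcal{L}$.
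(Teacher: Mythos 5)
The paper offers no argument for this statement at all; it is quoted directly from \cite{BERENSTEIN2005405}, so the only meaningful comparison is with Berenstein--Zelevinsky's actual proof, which your sketch is meant to reproduce. At that level your plan points in the right direction (quantum torus bookkeeping, compatibility and full rank, upper bounds), but as written it has a genuine gap: the one step carrying all the content is asserted rather than proved. In your caterpillar paragraph, the sentence ``by combining the pairwise exchange relations one pins down $X_i(t)$ as an element of $\mathcal{L}$'' is exactly where Fomin--Zelevinsky need nontrivial coprimality/gcd lemmas for the exchange binomials, and where Berenstein--Zelevinsky instead prove that the quantum upper bound of a seed --- the intersection of the based quantum torus of that seed with those of its $n$ adjacent seeds --- is invariant under mutation, with the compatibility/full-rank hypothesis guaranteeing the required coprimality of seeds automatically. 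Nothing in your sketch supplies either ingredient, and bar-invariance of the exchange relation does not do this job: fractional powers of $q$ are not the obstruction (the coefficient ring $\mathbb{Z}[q^{\pm 1/2}]$ already allows them); the obstruction is explaining why iterated substitution of binomial exchange relations produces no denominators other than monomials in the initial variables.

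Your ``cleaner'' reorganization is moreover circular as stated. With $\mathcal{U}_q$ defined as the intersection of the quantum Laurent rings over \emph{all} seeds, the inclusion $\mathcal{A}_q\subseteq\mathcal{U}_q$ already contains the factor indexed by $t_0$, so this inclusion \emph{is} the Laurent phenomenon (in its strong form); declaring it as ``step one'' hides the whole theorem in an unproven claim, and your ``step two'' (reducing the intersection to the clusters adjacent to $t_0$) is then not even needed for the conclusion. The correct order is the reverse, and it is Berenstein--Zelevinsky's: define the upper bound attached to a single seed and its adjacent seeds, prove its invariance under one-step mutation (the hard part, where full rank enters by forcing coprimality of seeds --- not by ``reducing the intersection''), and only then conclude that each $X_i(t)$, lying trivially in the upper bound of its own seed, lies in the based quantum torus at $t_0$. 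To make your proposal a proof you would need to state and prove that invariance (or a genuine quantum caterpillar/gcd lemma); as it stands, neither route is carried out.
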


\section{GENTLE ALGEBRA FROM SURFACE}
We first recall some results from \cite{10.2140/ant.2010.4.201}. Let $S$ be an oriented surface with boundary $\partial S$, and $M$ be a non-empty finite set of points on $\partial S$ intersecting each connected component of the boundary $\partial S$. The pair $(S,M)$ is referred to as an unpunctured bordered surface with marked points. 
\par An arc in $(S,M)$ is a curve $\gamma$ in $S$ such that
\begin{itemize}
	\item the endpoints of $\gamma$ are marked points in $M$.
	\item $\gamma$ does not intersect itself, except that its endpoints may coincide.
	\item $\gamma$ intersects the boundary only in its endpoints
	\item $\gamma$ does not cut out a monogon (that is, $\gamma$ is not contractible into a point of $M$).
\end{itemize}
We call an arc $\gamma$ a $boundary \ arc$ if it cuts out a digon (that is, $\gamma$ is homotopic to a curve on the boundary that intersects $M$ only on its endpoints). Otherwise, $\gamma$ is said to be an $internal \ arc$. Each arc is considered up to homotopy in the class of such curves. A $triangulation$ of $(S,M)$ is a maximal collection $\Gamma$ of arcs that does not intersect in the interior of $S$. We only consider marked surfaces that admit a triangulation.
\begin{definition}
	$Q(\Gamma)$ is the quiver whose set of points is the set of internal arcs of $\Gamma$, and the arrows are defined as follows: whenever there is a triangle $T$ in $\Gamma$ containing two arcs $a$ and $b$, then $Q(\Gamma)$ contains an arrow $a\rightarrow b$ if $b$ is a predecessor of $a$ with respect to clockwise orientation at the joint vertex of $a$ and $b$ in $T$.
\end{definition}
A triangle $T$ in $\Gamma$ is called an $internal \ triangle$ if all edges of $T$ are internal arcs. Every internal triangle $T$ in $\Gamma$ gives rise to an oriented cycle $\alpha_T\beta_T\gamma_T$ in $Q(\Gamma)$. We define 
\begin{equation*}
	W(\Gamma)=\sum_T\alpha_T\beta_T\gamma_T
\end{equation*}
where the sum runs over all internal triangles $T$ of $\Gamma$. Then we define $A(\Gamma)$ to be the Jacobian algebra of the quiver with potential $(Q(\Gamma),W(\Gamma))$.
\begin{lemma}
	[\cite{10.2140/ant.2010.4.201}]
	The algebra $A(\Gamma)$ is a gentle algebra.
\end{lemma}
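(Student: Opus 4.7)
The plan is to verify directly the four defining conditions of a gentle algebra for the Jacobian algebra $A(\Gamma)=KQ(\Gamma)/I(\Gamma)$, where $I(\Gamma)$ is the Jacobian ideal of $W(\Gamma)$. Recall that $A=KQ/I$ is \emph{gentle} when (G1) at each vertex at most two arrows start and at most two arrows end, (G2) for each arrow $\beta$ there is at most one $\alpha$ with $\alpha\beta\notin I$ and at most one $\gamma$ with $\beta\gamma\notin I$, (G3) the same with $\notin$ replaced by $\in$, and (G4) $I$ is generated by paths of length~$2$.

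The first step is a local analysis of $Q(\Gamma)$ at a vertex $i$ corresponding to an internal arc $a$. Since $\Gamma$ triangulates an unpunctured surface, $a$ bounds at most two triangles, and each such triangle $T$ contributes, by the clockwise-predecessor rule in the definition of $Q(\Gamma)$, at most one arrow starting at $i$ and at most one arrow ending at $i$ (exactly one of each when the other two sides of $T$ are both internal, giving the $3$-cycle $\alpha_T\beta_T\gamma_T$). Summing over the triangles containing $a$ then yields (G1).

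Next I would compute the Jacobian ideal. For each internal triangle $T$, the cyclic derivatives of $\alpha_T\beta_T\gamma_T$ produce the three length-two paths $\beta_T\gamma_T$, $\gamma_T\alpha_T$, $\alpha_T\beta_T$. Summing over $T$ shows that $I(\Gamma)$ is generated by length-two paths, establishing (G4). Moreover, one reads off that a length-two path $\delta\epsilon$ at a vertex $i$ lies in $I(\Gamma)$ precisely when $\delta$ and $\epsilon$ are the two arrows at $i$ contributed by a single internal triangle incident to $a$.

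To close (G2) and (G3), I would fix an arrow $\beta$ ending at $i$. By (G1) there are at most two arrows $\alpha$ starting at $i$; the description of $I(\Gamma)$ above forces at most one such $\alpha$ (namely the one lying in the same internal triangle as $\beta$, if it exists) to satisfy $\alpha\beta\in I(\Gamma)$, and at most one to satisfy $\alpha\beta\notin I(\Gamma)$. The symmetric argument at the source of $\beta$ takes care of the remaining halves of (G2) and (G3). The only step requiring genuine care is the bookkeeping of local configurations at an internal arc -- one or two adjacent internal triangles, with boundary arcs filling in the remaining sides -- and checking that in every case the count of arrows and the structure of relations match the gentle axioms; once this case analysis is laid out the verification is routine, and I do not expect any deeper obstacle.
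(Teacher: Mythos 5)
This lemma is not proved in the paper at all; it is quoted from the reference on gentle algebras arising from surface triangulations (Assem--Br\"ustle--Charbonneau-Jodoin--Plamondon), so there is no in-paper argument to compare against. Your outline reproduces, in essence, the proof given in that reference: a local analysis at each internal arc (at most two adjacent triangles, each contributing at most one incoming and one outgoing arrow, which gives condition (G1)), the observation that the cyclic derivatives of $W(\Gamma)=\sum_T\alpha_T\beta_T\gamma_T$ are exactly the length-two paths inside internal triangles (giving (G4) and the explicit description of $I(\Gamma)$), and then the matching of relations to triangles to get (G2) and (G3). The argument is correct as sketched; the points you defer to ``bookkeeping'' are genuinely the only places needing care, and they do go through: since the surface is unpunctured there are no self-folded triangles, each arrow of $Q(\Gamma)$ is attached to a unique triangle (even when two arcs bound two triangles, the two triangles give distinct arrows), so a length-two path lies in $I(\Gamma)$ iff its two arrows come from one internal triangle, and because $I(\Gamma)$ is generated by monomial (path) relations of length two it is admissible and no completion issues arise in forming the Jacobian algebra. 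With those remarks made explicit, your proposal is a complete and standard proof of the quoted lemma.
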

\par Recall that a finite dimensional algebra $A=KQ/I$ is $gentle$ if the following conditions are satisfied
\begin{enumerate}
	\item At each vertex of $Q$, at most two arrows start and at most two arrows stop.
	\item The ideal $I$ is generated by paths of length 2.
	\item For each arrow $\beta$ there is at most one arrow $\alpha$ and at most one arrow $\gamma$ such that $\alpha\beta\in I$ and $\beta\gamma\in I$.
	\item For each arrow $\beta$ there is at most one arrow $\alpha$ and at most one arrow $\gamma$ such that $\alpha\beta\notin I$ and $\beta\gamma\notin I$.
\end{enumerate}
\par Let $Q_0$ be the set of points in $Q$ and $Q_1$ be the set of arrows in $Q$. For an arrow $a\in Q_1$, $s(a)$ denotes the start point and $e(a)$ denotes the end point. $a^{-1}$ denotes the formal inverse of $a$ which means $s(a^{-1})=e(a)$ and $e(a^{-1})=s(a)$. The set of formal inverse arrows in $Q_1$ is denoted by $Q_1^{-1}$. A word $w=a_1a_2\cdots a_n$ is a $string$ if either $a_i$ or $a_i^{-1}$ is an arrow in $Q_1$, $a_{i+1}\neq a_i^{-1}$, $e(a_i)=s(a_{i+1})$ for $1\leq i\leq n-1$ and no subword of $w$ or its inverse is in $I$.
\par Given a string $w$, we denote by $M(w)$ the corresponding string module over $A$: the underlying vector space is obtained by replacing each vertex of $w$ by a copy of the field $K$, and the action of an arrow $a\in Q_1$ on $M(w)$ is the identity morphism if $a$ or its inverse is a letter of $w$, and zero otherwise.
\begin{remark}
	Note that by definition $M(w)=M(w^{-1})$. For each arc $\gamma$ in $S$, the corresponding string consists of the vertices where $\gamma$ crosses $\Gamma$, listed in order. The string module corresponding to a trivial string given by a vertex $i$ in $Q$ is the simple $A$-module corresponding to $i$. All the modules mentioned below are string modules.
\end{remark}
\begin{definition}
	The submodule $N$ of $M$ is a canonical embedding if the injective map $N\rightarrow M$ is induced by the identity on the non-zero component of $N$. In this way, $N$ is called a canonical submodule of $M$. Denote by $CS(M)$ the set of canonical submodules of $M$. 
\end{definition}
\par In \cite{Brstle2011AMI}, the authors provided a more direct way to define the canonical submodules. That is, for a string $w$
\begin{equation*}
	v_1-v_2-\cdots-v_s
\end{equation*}
a subword $w_I$ of $w$ indexed by an interval $I=[i,j]=\left\{i,i+1,\cdots,j\right\}$ is a string given by
\begin{equation*}
	v_i-v_{i+1}-\cdots-v_j
\end{equation*}
If in addition $M(w_I)$ is a submodule of $M(w)$, we call $w_I$ a $substring$. More generally, for any subset $I\subset \left\{1,2,\cdots,s\right\}$, we can uniquely write $I$ as a disjoint union of intervals of maximal length $I=I_1\cup I_2\cup \cdots \cup I_t$ such that
\begin{enumerate}
	\item $I_l$ is an interval for $1\leq l\leq t$.
	\item $max\left\{i|i\in I_l\right\}+2\leq min\left\{i|i\in I_{l+1}\right\}$ for each $1\leq l\leq t-1$.
	\item $I_j\cap I_k=\emptyset$ if $j\neq k$.
\end{enumerate}
\par Then for any subset $I$ with the decomposition $I=I_1\cup I_2\cup\cdots \cup I_t$, consider the string module
\begin{equation*}
	M_I(w)=\bigoplus^t_{l=1}M(w_{I_l})
\end{equation*}
Set
\begin{equation*}
	S(w)=\left\{I\subset \left\{1,2,\cdots ,s\right\}|M_I(w) \ is \ a \ submodule \ of \ M(w)\right\}
\end{equation*}
It is clear that each substring induces a canonical submodule of $M(w)$. Moreover, since the decomposition is disjoint with maximal length, the supports of the substring modules are pairwise disjoint. So any subset of $S(w)$ induces a canonical submodule of $M(w)$. The following lemma shows that canonical submodules correspond to elements of $S(w)$.
\begin{lemma}
	For any string $w$ in $Q$, there is a bijection 
	\begin{equation*}
		f:S(w)\rightarrow CS(M(w))
	\end{equation*}
\end{lemma}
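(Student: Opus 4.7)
My plan is to define $f$ by sending $I\in S(w)$, with maximal-interval decomposition $I=I_1\cup\cdots\cup I_t$, to the submodule $M_I(w)=\bigoplus_{l}M(w_{I_l})\subset M(w)$ via the canonical componentwise embedding. Bijectivity will then be established by reading off $I$ from the support of the image.

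The injective direction is immediate. By construction the image of $M_I(w)\hookrightarrow M(w)$ is the subspace spanned by the basis vectors $v_i$ with $i\in I$, so $f(I)\in CS(M(w))$ and the support of $f(I)$ recovers $I$; different subsets thus produce different submodules.

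For surjectivity I would take $N\in CS(M(w))$, set $I=\{i\mid v_i\in N\}$, and write $I$ in its unique decomposition $I_1\cup\cdots\cup I_t$ into maximal intervals of consecutive integers. Maximality yields $\max I_l+2\le \min I_{l+1}$ automatically. The core verification is that each $M(w_{I_l})$ is a submodule of $M(w)$, so that $I\in S(w)$. For this I would use the explicit arrow action on string modules: for $x\in Q_1$, one has $x\cdot v_i=v_{i+1}$ exactly when the letter $a_i=x$ in $w$, and $x\cdot v_i=v_{i-1}$ exactly when $a_{i-1}=x^{-1}$. Fix $I_l=[a,b]$. If $b<s$, the submodule property of $N$ together with $b\in I$ and $b+1\notin I$ forces $a_b$ not to be a direct arrow, hence to be an inverse letter; symmetrically, if $a>1$, then $a_{a-1}$ must be a direct arrow. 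These are precisely the sink/source orientations that make $M(w_{I_l})$ closed under the $M(w)$-action. Because consecutive intervals are separated by a gap of at least two, no arrow can act between basis vectors in different summands, so $M_I(w)=\bigoplus_l M(w_{I_l})$ is genuinely a submodule of $M(w)$. Finally, $M_I(w)$ and $N$ are canonical submodules of $M(w)$ sharing the same support, hence they coincide and $f(I)=N$.

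The only nontrivial step is this endpoint analysis: translating the submodule property of $N$ into orientation constraints on the letters $a_{a-1}$ and $a_b$ at the boundary of each maximal interval. Everything else reduces to bookkeeping on supports.
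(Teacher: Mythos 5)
Your proof is correct and follows essentially the same route as the paper: define $f$ by $I\mapsto M_I(w)$ and invert it by reading off the support $I_N=\{i\mid v_i\in N\}$ of a canonical submodule. Your endpoint analysis (translating closure under the arrow action into orientation constraints on $a_{a-1}$ and $a_b$ for each maximal interval) simply makes explicit the step the paper states without proof, namely that each direct summand of $N$ is given by a substring of $w$.
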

\begin{proof}
	We construct the inverse of the map $f$. For any canonical submodule $N\hookrightarrow M(w)$, let
	\begin{equation*}
		I_N=\left\{i|v_i \in N\right\}
	\end{equation*}
	Then if we have $N=N(w_1)\oplus N(w_2)\oplus\cdots\oplus N(w_t)$, then each $w_i$ must be a substring of $w$. So $I_N$ has decomposition $I_1\cup I_2\cup\cdots \cup I_t$ with $w_{I_i}=w_i$ for any $1\leq i\leq t$. Therefore $f(I_N)=N$. $I_{f(I)}=I$ is obviously true for any $I\in S(w)$.
\end{proof}
From now on, we will identify a canonical submodule with its index set. Sometimes, we will use the index set to represent a string module (which need not necessarily be a submodule) whose support is the index set, provided there is no ambiguity.
\section{CLUSTER EXPANSION FORMULA}
In this section, we recall from \cite{10.1007/s11511-008-0030-7} the cluster algebra coming from an unpunctured surface $(S,M)$. Given a triangulation $\Gamma$ of a surface, let $B(\Gamma)$ denote the matrix associated with the quiver $Q(\Gamma)$. That is, $B(\Gamma)=(b_{ij})$ is an $n\times n$ matrix where:
\begin{equation*}
	b_{ij}=\mbox{number of arrows from $j$ to $i$}-\mbox{number of arrows from $i$ to $j$}
\end{equation*}
\par We say that a quantum cluster algebra $\mathcal{A}_q$ is coming from $(S,M)$ if there exists a triangulation $\Gamma$ such that $B(\Gamma)$ is the principal part of the matrix of a seed of $\mathcal{A}_q$. We let $\mathcal{A}_q(S,M)$ denote the quantum cluster algebra coming from $(S,M)$.
\par The snake graph is a significant tool in \cite{MUSIKER20112241} to prove positivity for cluster algebras from surfaces, and in \cite{Huang2021AnEF} for the quantum version. Here we recall the key result in their paper.
\subsection{Snake graph and perfect matchings}
Let $\Gamma=\left\{\tau_1,\tau_2,\cdots,\tau_n,\cdots,\tau_m\right\}$ where the $\tau_i$ are internal arcs for $1\leq i\leq n$ and $\tau_i$ are boundary arcs for $n+1\leq i\leq m$. Let $\gamma$ be an arc in $(S,M)$. Let $p_0$ be the starting point of $\gamma$ and $p_{d+1}$ be its endpoint. Assume $\gamma$ crosses $\Gamma$ at $p_1,\cdots,p_d$ in order. Let $\tau_{i_j}$ be the arc in $\Gamma$ containing $p_j$. Let $\Delta_{j-1}$ and $\Delta_{j}$ be the two triangles in $\Gamma$ on either side of $\tau_{i_j}$.
\par For each $p_j$, we associate a $tile$ $G(p_j)$ as follows. Define $\Delta^j_1$ and $\Delta^j_2$ to be two triangles with edges labeled as in $\Delta_{j-1}$ and $\Delta_{j}$, further, the orientations of $\Delta^j_1$ and $\Delta^j_2$ both agree with those of $\Delta_{j-1}$ and $\Delta_{j}$ if $j$ is odd; the orientations of $\Delta^j_1$ and $\Delta^j_2$ both disagree with those of $\Delta_{j-1}$ and $\Delta_{j}$ if $j$ is even. We glue $\Delta^j_1$ and $\Delta^j_2$ at the edge labeled $\tau_{i_j}$, so that the orientation of $\Delta^j_1$ and $\Delta^j_2$ both either agree or disagree with those of $\Delta_{j-1}$ and $\Delta_{j}$. We call the edge labeled $\tau_{i_j}$ the diagonal of $G(p_j)$.
\par The two arcs $\tau_{i_j}$ and $\tau_{i_{j+1}}$ form two edges of the triangle $\Delta$. Denote the third edge of $\Delta$ by $\tau_{[\gamma_j]}$. After gluing the tiles $G(p_j)$ and $G(p_{j+1})$ at the edge labeled $\tau_{[\gamma_j]}$ for $1\leq j\leq d-1$ step by step, we obtain a graph, denoted by $\overline{G_{\Gamma,\gamma}}$. Let $G_{\Gamma,\gamma}$ be the graph obtained from $\overline{G_{\Gamma,\gamma}}$ by removing the diagonal of each tile.
\par In particular, when $\gamma\in \Gamma$, let $G_{\Gamma,\gamma}$ be the graph with only one edge
labeled $\gamma$.
\begin{definition}
	\begin{enumerate}
		\item We call a snake graph $G$ with tiles $G_1,\cdots,G_s$ a $zigzag$ if for all $i$ either $G_i$ is glued on top of $G_{i-1}$ and $G_{i+1}$ is glued to the right of $G_i$ or if $G_i$ is glued to the right of $G_{i-1}$ and $G_{i+1}$ is glued on top of $G_i$.
		\item We call a snake graph $G$ with tiles $G_1,\cdots,G_s$ a $straight \ piece$ if for all $i$ either $G_i$ is glued to the right of $G_{i-1}$ and $G_{i+1}$ is glued to the right of $G_i$ or if $G_i$ is on top of $G_{i-1}$ and $G_{i+1}$ is glued on top of $G_i$.
	\end{enumerate}
\end{definition}
\begin{definition}
	A $perfect$ $matching$ of a graph $G$ is a subset $P$ of the edges of $G$ such that each vertex of $G$ is incident to exactly one edge of $P$. Denote the set of all perfect matchings of $G$ by $\mathcal{P}(G)$.
\end{definition}
\begin{lemma}[\cite{Huang2021AnEF}]
	Let $G_i$ and $G_{i+1}$ be two consecutive tiles of $G_{\Gamma,\gamma}$ sharing the same edge a. If $b$ and $c$ are edges of $G_i$ and $G_{i+1}$, respectively, which are incident to $a$, then $b$ and $c$ can not simultaneously be in a perfect matching of $G$.
\end{lemma}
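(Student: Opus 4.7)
Let $u$ and $v$ denote the two endpoints of $a$. Since $b$ is an edge of the quadrilateral tile $G_i$ incident to $a$, it shares exactly one endpoint with $a$, and likewise for $c \subset G_{i+1}$. My plan is first to dispatch the easy case and then to handle the substantive case by a local analysis together with a propagation along the snake graph. If $b$ and $c$ are incident to the same endpoint of $a$, say $u$, then any perfect matching $P$ containing both $b$ and $c$ would cover $u$ twice, immediately contradicting the definition of $P$.

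In the substantive case, $b$ is incident to $u$ and $c$ to $v$ (after relabeling). Label the vertices of $G_i$ so that its sides are $a = uv$, $b = ux$, $xy$, and $vy$. Supposing $b, c \in P$: the edge $b$ covers $x$, so $xy \notin P$; the edge $c$ covers $v$, so $vy \notin P$; and $a \notin P$ since $v$ is already covered. Consequently both edges of $G_i$ incident to $y$ are absent from $P$, forcing $y$ to be covered by an edge of some neighboring tile, which can only be $G_{i-1}$ with $y$ lying on the shared edge $G_{i-1} \cap G_i$. If this fails — for instance if $G_i$ is the first tile of $G_{\Gamma,\gamma}$, or if the local geometry makes $y$ a free corner of $G_i$ — the contradiction is immediate. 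A symmetric analysis starting from $c \in G_{i+1}$ produces an analogous blocked vertex $x'$ of $G_{i+1}$ adjacent to $u$, which must be covered by an edge of $G_{i+2}$.

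Otherwise the blocking propagates backward: the forced matching pattern in $G_{i-1}$ reproduces the same configuration at a corner opposite to $G_{i-2} \cap G_{i-1}$, and the argument iterates through $G_{i-2}, G_{i-3}, \dots$; symmetrically the block at $x'$ propagates forward through $G_{i+2}, G_{i+3}, \dots$. Since the snake graph has only finitely many tiles, both chains terminate at boundary tiles of $G_{\Gamma,\gamma}$, where the blocked vertex is a unique corner with no available covering edge, yielding the contradiction. \emph{The main obstacle} is making this propagation step uniform, since the identity of the blocked corner at each iteration depends on whether the snake graph is locally a straight piece or a zigzag turn; I would formalize it as an induction on the distance from $G_i$ (respectively $G_{i+1}$) to the nearest end of the snake graph, with the inductive step verifying that the blocking constraint transfers consistently one tile further until no tiles remain.
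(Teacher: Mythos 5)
Note first that the paper does not prove this lemma at all: it is quoted from the cited reference, so your argument has to stand on its own. As written it has a genuine gap exactly where you flag "the main obstacle". Your easy case (both $b$ and $c$ at the same endpoint of $a$) and the local analysis inside $G_i$ (the fourth vertex $y$ has both of its $G_i$-edges excluded) are correct. But the assertion that $y$ can only be covered by an edge of $G_{i-1}$, with $y$ on the shared edge $G_{i-1}\cap G_i$, and that the blocked configuration then reproduces itself one tile further, fails at a zigzag turn: there $y$ can be the corner vertex common to $G_{i-2}$, $G_{i-1}$ and $G_i$, and the edges still available at $y$ are the shared edge $G_{i-2}\cap G_{i-1}$ \emph{and} the outer edge of $G_{i-2}$ at that corner, which lies in $G_{i-2}$ only. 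So the matching edge covering $y$ may skip $G_{i-1}$ altogether, the invariant "same configuration in the next tile" does not transfer, and one is left with a branching case analysis rather than a forced chain. Concretely, take four tiles with $G_2$ glued on top of $G_1$, $G_3$ to the right of $G_2$, $G_4$ on top of $G_3$, let $a=G_3\cap G_4$, let $b$ be the edge of $G_3$ at the far endpoint of $a$ and $c$ the edge of $G_4$ at the other endpoint: the blocked vertex of $G_3$ is the triple corner of $G_1,G_2,G_3$ and can be matched by an edge belonging only to $G_1$. Your proposal acknowledges this difficulty but does not resolve it, and the inductive step as stated is false, so the proof is incomplete.

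The gap is fixable, and there is a cleaner route that avoids the propagation entirely. Suppose $b$ and $c$ lie at opposite endpoints $u$ and $v$ of $a$ and both belong to a perfect matching $P$, and let $H$ be the subgraph of $G_{\Gamma,\gamma}$ formed by the tiles $G_1,\dots,G_i$. Then $H$ has $2i+2$ vertices (each tile after the first contributes exactly two new vertices), and the only vertices of $H$ that are incident to edges of $G_{\Gamma,\gamma}$ not contained in $H$ are $u$ and $v$, since any tile $G_k$ with $k\geq i+1$ meets $H$ only in endpoints of $a$. In the assumed situation $u$ is matched by $b$, which lies in $H$, while $v$ is matched by $c$, which is an edge of $G_{i+1}$ other than $a$ and hence not an edge of $H$. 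Therefore the edges of $P$ contained in $H$ match exactly $2i+2-1=2i+1$ vertices of $H$, an odd number, which is impossible for a matching. Combined with your same-endpoint case this proves the lemma uniformly, with no distinction between straight pieces and zigzag pieces and no induction along the snake graph.
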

\begin{definition}
	Let $a_1$ and $a_2$ be the two edges of $G_{\Gamma,\gamma}$ which lie in the counterclockwise direction from the diagonal of $G(p_1)$. Then the $minimal$ $matching$ $P_-(G_{\Gamma,\gamma})$ is defined as the unique matching which contains only boundary edges and does not contain edges $a_1$ or $a_2$. The $maximal$ $matching$ $P_+(G_{\Gamma,\gamma})$ is the other matching with only boundary edges.
\end{definition}
\begin{lemma}[\cite{Huang2021AnEF}]
	Let $a$ be an edge of the tile $G(p_j)$. If $a$ is in the maximal/minimal perfect matching of $G_{\Gamma,\gamma}$, then $a$ lies in the counterclockwise/clockwise direction from the diagonal of $G(p_j)$ when $j$ is odd and lies in the clockwise/counterclockwise direction from the diagonal of $G(p_j)$ when $j$ is even.
\end{lemma}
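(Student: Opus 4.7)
The plan is to proceed by induction on the tile index $j$, with the preceding lemma (incident edges of two consecutive tiles cannot simultaneously appear in a perfect matching) as the main tool for passing from $j$ to $j+1$. Before starting, I fix the convention that each tile $G(p_j)$ has four non-diagonal edges, split by the diagonal $\tau_{i_j}$ into a CCW pair and a CW pair, each pair consisting of two vertex-disjoint edges. For the base case $j=1$: by the very definition of $P_-(G_{\Gamma,\gamma})$, the two CCW edges $a_1, a_2$ are excluded, so any edge of $G(p_1)$ belonging to $P_-$ must lie on the CW side of the diagonal. A complementary analysis of how the unshared vertices of $G(p_1)$ can be covered, together with the characterization of $P_+$ as the unique other boundary-edge matching, forces any edge of $G(p_1)$ in $P_+$ to lie on the CCW side. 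Since $j=1$ is odd, both assertions match the statement.

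For the inductive step, assume the claim for $G(p_j)$ and let $e$ be an edge of $G(p_{j+1})$ belonging to $P_-$ (the argument for $P_+$ is symmetric). The preceding lemma applied at each endpoint of the shared edge $\tau_{[\gamma_j]}$ forbids exactly those edges of $G(p_{j+1})$ that share a vertex with an edge of $G(p_j)$ already in $P_-$. The inductive hypothesis pins down the side of $\tau_{i_j}$ on which the $P_-$-edges of $G(p_j)$ adjacent to $\tau_{[\gamma_j]}$ sit, and translating this constraint across the gluing determines on which side of $\tau_{i_{j+1}}$ the edge $e$ may lie. The essential geometric input is the parity-dependent orientation rule in the construction of the tiles: the orientations of $\Delta^j_1, \Delta^j_2$ agree with those of $\Delta_{j-1}, \Delta_j$ when $j$ is odd and disagree when $j$ is even, which is precisely what swaps the CCW/CW label between $G(p_j)$ and $G(p_{j+1})$ after the gluing along $\tau_{[\gamma_j]}$.

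The main obstacle I expect is the bookkeeping in this inductive step: depending on the orientation of the surface triangle $\Delta_j$ and on which of the four positions the shared edge $\tau_{[\gamma_j]}$ occupies in $G(p_j)$ (and correspondingly in $G(p_{j+1})$), one must verify case by case that an edge labeled CCW from the diagonal in $G(p_j)$ genuinely corresponds to an edge labeled CW from the diagonal in $G(p_{j+1})$ through the combined gluing and orientation flip. Once this geometric dictionary is established over the (small finite number of) configurations, the inductive conclusion follows mechanically from the preceding lemma applied to each endpoint of $\tau_{[\gamma_j]}$.
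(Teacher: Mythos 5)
The paper does not prove this lemma at all (it is quoted from Huang's paper \cite{Huang2021AnEF}), so your proposal has to stand on its own, and as written the inductive step has a genuine gap. Your inductive hypothesis is only the conditional statement ``if an edge of $G(p_j)$ lies in $P_-$, then it lies on a prescribed side of the diagonal''; it does not tell you which edges of $G(p_j)$ actually belong to $P_-$, and in general a tile may contain only one, or even no, edge of $P_-$. Consequently the preceding lemma may impose no constraint whatsoever on $G(p_{j+1})$. Already for two tiles glued horizontally (first tile with vertices $A,B,C,D$, second tile $B,E,F,C$, shared edge $BC$) the minimal matching is $\{DA,\,BE,\,FC\}$: the unique $P_-$-edge of the first tile, $DA$, is not incident to the shared edge, so the exclusion lemma forbids nothing about the second tile, and your step cannot rule out the wrong-side edge $EF$. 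What in fact forces $BE,FC\in P_-$ and $EF\notin P_-$ is the covering condition at the vertices $B$ and $C$ together with the fact that $P_-$ consists of boundary edges only --- ingredients your outline never invokes. Worse, in a three-tile straight piece the middle tile meets one of $P_\pm$ in no edge at all, so the hypothesis you propagate can be vacuous exactly when you need it.

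To make an induction work you must strengthen what is carried along: keep track of the actual restriction of $P_-$ to the tiles seen so far, or equivalently of which endpoints of the shared edge $\tau_{[\gamma_j]}$ are already matched by edges of earlier tiles. Then there are two cases: if both endpoints are already covered, the matching property (your exclusion lemma) removes the pair of edges of $G(p_{j+1})$ adjacent to the shared edge; if they are not covered, they must be matched by edges of $G(p_{j+1})$ (or of $G(p_{j+2})$ at a turning vertex, a configuration your outline also does not address), which forces those edges into $P_-$ and excludes the opposite pair. More cleanly, note that every vertex of a snake graph lies on its boundary cycle, so there are exactly two perfect matchings by boundary edges and they alternate around that cycle; $P_\pm$ are these two matchings, and the stated parity rule is then a direct verification of how this alternation interacts with the orientation convention (tile orientations agreeing with the surface for odd $j$ and disagreeing for even $j$). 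Your identification of that orientation rule as the source of the parity flip is correct; the missing piece is the mechanism that pins down the matching on each tile, which is not the consecutive-tile exclusion lemma but the alternation of the boundary matching.
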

\begin{remark}
	This property is actually a standard criterion for determining whether a boundary edge lies in the minimal or maximal matching. For example, if $a$ is a boundary edge lying in the clockwise direction from the diagonal of $G(p_j)$ when $j$ is odd, then $a$ must belong to the minimal matching.
\end{remark}
\begin{definition}
	A perfect matching $P$ can $twist$ on a tile $G(p)$ if $G(p)$ has two edges belonging to $P$. In such a case we define the $twist$ $\mu_p(P)$ of $P$ on $G(p)$ to be the perfect matching obtained from $P$ by replacing the edges in $G(p)$ by the remaining two edges.
\end{definition}
\begin{lemma}
	For any two perfect matchings $P,Q\in G_{\Gamma,\gamma}$, $Q$ can be obtained from $P$ by a sequence of twists.
\end{lemma}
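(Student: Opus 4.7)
The plan is to reduce the statement to the special case where one of the matchings is the minimal matching $P_-$. Since a twist is an involution, showing that every perfect matching $P$ can be transformed into $P_-$ by a sequence of twists immediately implies that $Q$ can be reached from $P$: first twist $P$ down to $P_-$, then invert the sequence of twists converting $Q$ into $P_-$. The argument will proceed by induction on the size of the symmetric difference $|P \triangle P_-|$.

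First, I would study the subgraph $D = P \triangle P_-$ of $G_{\Gamma,\gamma}$. Because both $P$ and $P_-$ are perfect matchings, every vertex is incident to exactly one edge of each, so in $D$ every vertex has degree $0$ or $2$. Consequently $D$ decomposes into a disjoint union of simple cycles, whose edges alternate between $P$ and $P_-$. The snake graph $G_{\Gamma,\gamma}$ is obtained by gluing tiles along shared edges in a strip-like fashion, so any simple cycle in it must enclose a nonempty connected union of tiles; this follows from planarity of the snake graph together with the fact that cycles in a tree-like gluing must bound regions covered by tiles.

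Next, from a cycle $C$ in $D$ I want to locate a tile $G(p_j)$ among those enclosed by $C$ on which $P$ admits a twist, i.e.\ a tile having two of its four edges in $P$. An "extreme" tile at an end of the enclosed strip is a natural candidate: on such a tile the minimal matching $P_-$ uses the two boundary edges in the clockwise direction from the diagonal (for $j$ odd, and symmetrically for $j$ even, by the lemma characterising $P_-$), while along the cycle $C$ the matching $P$ must instead occupy the two counterclockwise edges in order to close up. The earlier lemma forbidding incident edges across a glued edge from appearing simultaneously in a matching ensures that the transition between $P$ and $P_-$ across consecutive tiles of $C$ is forced, and therefore that such a tile exists. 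Applying the twist $\mu_{p_j}(P)$ replaces these two edges of $P$ by the two edges of $P_-$ on $G(p_j)$, and so strictly shrinks the symmetric difference with $P_-$. By induction the process terminates at $P_-$.

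The main obstacle is the geometric/combinatorial step of guaranteeing that an enclosed "extreme" tile yielding a valid twist actually exists; the rest of the argument is a straightforward induction. This requires using the shape of a snake graph and the lemma about incident edges sharing a glue-edge to show that alternating cycles cannot evade having a tile with two edges in $P$. Once this is established, invertibility of twists, together with the inductive reduction to $P_-$, finishes the proof.
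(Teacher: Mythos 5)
The paper itself states this lemma without proof (it is recalled from the literature on snake graphs), so there is no in-paper argument to measure you against; your proposal therefore has to stand on its own, and as written it has a genuine gap at exactly the step you flag as the main obstacle. Your reduction to $P_-$ via involutivity of twists and the decomposition of $P\triangle P_-$ into alternating cycles enclosing runs of tiles are both fine and standard. But the claim that an ``extreme'' tile of the enclosed region always carries two edges of $P$ is false. Along the boundary cycle the three consecutive boundary edges of an end tile alternate, and the pattern may be $P_-,P,P_-$, in which case that end tile contains only one edge of $P$. Concretely, take an enclosed straight run of three tiles on which the restriction of $P$ is the boundary matching consisting of the two end (short) edges together with the two parallel edges of the middle tile, while $P_-$ restricts to the complementary boundary matching using the two parallel edges of each end tile: then neither end tile of the enclosed region is twistable in $P$; only the middle tile is. Moreover, your induction measure also fails there: twisting the middle tile replaces its two boundary edges (which lie in $P\setminus P_-$) by the two glued interior edges, which lie in neither matching, so $|P\triangle P_-|$ does not decrease at all. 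Thus both the existence claim and the termination argument break.

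The skeleton is repairable, but it needs a different selection of tile and a different measure. Since every vertex of the enclosed sub-snake-graph lies on the alternating cycle, the restriction of $P$ to that region is a perfect matching of it using only its boundary edges; it has (number of enclosed tiles)$+1$ edges, and each boundary edge belongs to exactly one tile, so by pigeonhole some enclosed tile contains two (necessarily opposite) edges of $P$ and hence admits a twist --- this replaces your unsupported clockwise/counterclockwise argument. One should then induct not on $|P\triangle P_-|$ but on the number of tiles enclosed by the symmetric difference (the ``area'' between $P$ and $P_-$), checking that a twist at such a tile removes that tile from the enclosed region; alternatively, one can simply invoke the known lattice structure on perfect matchings of snake graphs, with $P_-$ the unique minimal element, which is how this connectivity statement is usually justified in the sources the paper is quoting.
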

\subsection{Quantum Laurent expansions}
For any $1\leq s\leq d$, denote by $G^+_{p_s}$ the subgraph of $G_{\Gamma,\gamma}$ formed by the tiles $G(p_{s+1}),\cdots,G(p_d)$ and $G^-_{p_s}$ the subgraph of $G_{\Gamma,\gamma}$ formed by the tiles $G(p_1),\cdots,G(p_{s-1})$.
\begin{definition}\label{d1}
	\begin{enumerate}
		\item Define
		\begin{center}
			$m^{\pm}_{p_s}(\tau_{i_s},\gamma)$=Number of diagonals labeled $\tau_{i_s}$ of $G^\pm_{p_s}$.
		\end{center}
		\item For any perfect matching $P$ that can twist on the tile $G(p_s)$, define
		\begin{center}
			$n^{\pm}_{p_s}(\tau_{i_s},P)$=Number of edges labeled $\tau_{i_s}$ of $P\cap E(G_{p_s}^\pm)$.
		\end{center}
		where $E(G_{p_s}^\pm)$ is the edge set of $G^\pm_{p_s}$.
	\end{enumerate}
\end{definition}
Fix $s$. Assume that $a_{1_s}$, $a_{4_s}$, $\tau_{i_s}$ and $a_{2_s}$, $a_{3_s}$, $\tau_{i_s}$ are triangles in $\Gamma$ such that $a_{1_s}$, $a_{3_s}$ are clockwise to $\tau_{i_s}$ and $a_{2_s}$, $a_{4_s}$ are counterclockwise to $\tau_{i_s}$. Note that $a_{i_s}$, for $i\in 1,2,3,4$ may be boundary arcs.
\begin{definition}\label{omega}
	Suppose that $P$ can twist on $G(p_s)$. If the edges labeled $a_{2_s}$, $a_{4_s}$ of $G(p_s)$ are in $P$, define
	\begin{equation*}
		\Omega(p_s,P)=n^+_{p_s}(\tau_{i_s},P)-m^+_{p_s}(\tau_{i_s},\gamma)-n^-_{p_s}(\tau_{i_s},P)+m^-_{p_s}(\tau_{i_s},\gamma)
	\end{equation*}
	otherwise, define
	\begin{equation*}
		\Omega(p_s,P)=-[n^+_{p_s}(\tau_{i_s},P)-m^+_{p_s}(\tau_{i_s},\gamma)-n^-_{p_s}(\tau_{i_s},P)+m^-_{p_s}(\tau_{i_s},\gamma)]
	\end{equation*}
\end{definition}
In the following, we aim to establish a connection between cluster algebras and surfaces. Here is a key result in \cite{10.1007/s11511-008-0030-7}, \cite{Huang2021AnEF}, \cite{mandel2023braceletsbasesthetabases}.
\begin{theorem}
	There is a bijection $\gamma\leftrightarrow X_\gamma$ between the internal arcs in $(S,M)$ and the cluster variables in $\mathcal{A}$. In particular, the arcs in $\Gamma$ are in one-to-one correspondence with the cluster variables in the initial seed. That is, $X_{\tau_i}=X^{e_i}$.
\end{theorem}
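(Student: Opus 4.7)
The plan is to prove the theorem by reducing it to the combinatorics of the flip graph of triangulations together with the quantum mutation rules recalled in Section 2. The proof naturally splits into three steps: constructing the map $\gamma \mapsto X_\gamma$, verifying well-definedness, and establishing bijectivity.

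First I would fix $\Gamma = \{\tau_1,\dots,\tau_m\}$ as the initial triangulation and set $X_{\tau_i} = X^{e_i}$, producing the initial quantum seed $(\Lambda, \widetilde{B}(\Gamma), X)$; existence of a compatible $\Lambda$ for $\widetilde{B}(\Gamma)$ is supplied by \cite{Huang2021AnEF} and \cite{mandel2023braceletsbasesthetabases}. For any internal arc $\gamma \notin \Gamma$, I would extend $\gamma$ to a triangulation $\Gamma'$ and choose a sequence of flips connecting $\Gamma$ to $\Gamma'$, using the fact that the flip graph of an unpunctured marked surface is connected. At the matrix level, flipping an internal edge $\tau_k$ transforms $\widetilde{B}$ exactly by the mutation rule in direction $k$, so each flip lifts to a quantum mutation of the seed. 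I would then define $X_\gamma$ as the cluster variable sitting in the position formerly occupied by the flipped arc in the final seed.

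The main obstacle is well-definedness: if $\gamma$ appears in two triangulations reached by different flip sequences, the resulting Laurent polynomials in $\mathcal{T}$ must agree. The cleanest way to handle this is to invoke Huang's explicit expansion formula \cite{Huang2021AnEF}, which expresses $X_\gamma$ as a sum over perfect matchings of the snake graph $G_{\Gamma,\gamma}$ with weights built from the quantities $\Omega(p_s,P)$ of Definition \ref{omega}. Since this sum depends only on the pair $(\Gamma,\gamma)$ and not on the chosen mutation path, well-definedness follows, and $X_\gamma$ is simultaneously seen to lie in $\mathcal{A}_q$.

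For bijectivity, surjectivity is immediate: any quantum cluster variable belongs to some seed reachable from the initial seed by mutations, and retracing those mutations as flips produces a triangulation in which the variable corresponds to an internal arc. Injectivity follows from the explicit expansion as well: non-homotopic arcs yield non-isomorphic snake graphs, and the extremal monomials indexed by the minimal and maximal matchings already distinguish them, so the assignment $\gamma \mapsto X_\gamma$ is injective and the theorem follows.
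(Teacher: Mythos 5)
The paper itself does not prove this statement: it is quoted as a known result from Fomin--Shapiro--Thurston \cite{10.1007/s11511-008-0030-7} and its quantum counterparts \cite{Huang2021AnEF}, \cite{mandel2023braceletsbasesthetabases}, so any self-contained argument has to reprove (or carefully reduce to) those theorems. Measured against that, your sketch has a genuine circularity at its central step. Huang's expansion formula is a theorem \emph{about} the already-defined quantum cluster variable $X_\gamma$; its statement and proof presuppose exactly the arc--variable correspondence you are trying to establish (it is one of the sources cited for this very theorem). You therefore cannot invoke it as a black box to get well-definedness of $X_\gamma$. The non-circular version of your idea is to \emph{define} $X_\gamma$ by the matching sum and then prove by induction on flips that this expression satisfies the quantum exchange relation at every flip (this is the actual content of the Musiker--Schiffler--Williams/Huang proofs), or alternatively to get path-independence from the fact that the arc complex is not merely connected but simply connected, so that any two flip sequences are related by square and pentagon moves which are matched by the corresponding mutation identities; connectivity of the flip graph alone, which is all you use, does not give independence of the chosen mutation path.

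The bijectivity argument also has a gap on the injective side. ``Non-homotopic arcs yield non-isomorphic snake graphs'' is not justified and, as stated, is about the wrong invariant: two distinct arcs can have abstractly isomorphic snake graphs, and what separates the variables is the labelling by arcs of $\Gamma$, i.e.\ ultimately the Laurent data. The standard route is to show that distinct arcs have distinct $g$-vectors (equivalently, distinct minimal-matching monomials, or distinct crossing/denominator vectors with respect to $\Gamma$) and then use that distinct quantum cluster variables are separated by their extremal terms; the first half of this is a statement about curves on surfaces that needs its own proof and is simply asserted in your sketch. Surjectivity via retracing mutations as flips is fine in the unpunctured case (every arc of a triangulation is flippable, so the $n$-regular tree of seeds is covered by flip sequences), but it too only becomes available after the well-definedness step has been closed.
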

\begin{lemma}
	Let $P\in \mathcal{P}(G_{\Gamma,\gamma})$. The $symmetric \ difference$
	\begin{equation*}
		(P_-(G_{\Gamma,\gamma})\cup P)\backslash(P_-(G_{\Gamma,\gamma})\cap P)
	\end{equation*}
	is a set of boundary edges of a (possibly disconnected) subgraph of $G_{\Gamma,\gamma}$, which is a union of cycles.
\end{lemma}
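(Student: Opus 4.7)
The plan is to combine a local degree-count argument with the planarity of the snake graph. First, I would show that the symmetric difference $P_{-}\triangle P$, viewed as a spanning subgraph of $G_{\Gamma,\gamma}$, has every vertex of degree $0$ or $2$, hence is a disjoint union of simple cycles. Then I would use that $G_{\Gamma,\gamma}$, as a planar region built out of its tile faces, is simply connected, in order to realise those cycles as the boundary edges of a union of tiles.

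For the degree count, fix any vertex $v$ of $G_{\Gamma,\gamma}$. Since $P$ and $P_{-}$ are both perfect matchings, exactly one edge of each is incident to $v$. These two edges are either equal, in which case $v$ is incident to no edge of $P_{-}\triangle P$, or distinct, in which case $v$ is incident to exactly two edges of $P_{-}\triangle P$, one contributed by each matching. A finite graph in which every vertex has degree $0$ or $2$ is automatically a vertex-disjoint union of simple cycles $C_{1},\dots,C_{k}$; moreover each such cycle alternates $P_{-}$-edges and $P$-edges, hence has even length.

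For the second step, I would use the natural planar embedding of $G_{\Gamma,\gamma}$ in which the bounded faces are exactly the tiles $G(p_{1}),\dots,G(p_{d})$. Filling in the tiles produces a finite $2$-dimensional CW-complex homeomorphic to a disk, hence contractible, so its cellular first homology over $\mathbb{Z}/2$ vanishes. The degree condition established above is precisely the statement that $P_{-}\triangle P$ is a $1$-cycle in this complex; vanishing of $H_{1}$ then produces a $2$-chain $H$, i.e.\ a collection of tiles, whose cellular boundary equals $P_{-}\triangle P$. By definition of the boundary map this consists of exactly the edges of $G_{\Gamma,\gamma}$ incident to exactly one tile of $H$, which is precisely the set of boundary edges of the subgraph $H\subseteq G_{\Gamma,\gamma}$.

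The main (and rather mild) obstacle is the bookkeeping that ensures the cycles $C_{i}$ assemble consistently as the boundary of a single union of tiles. An interior edge of $G_{\Gamma,\gamma}$ shared by two tiles may lie in $P_{-}\triangle P$, and one must confirm that such an edge separates a tile of $H$ from a tile outside $H$; the homological argument above handles this automatically, but an equivalent combinatorial route is to orient each $C_{i}$ in the plane and declare a tile to belong to $H$ iff it is enclosed by an odd number of the $C_{i}$, and then verify directly that the resulting subgraph has boundary $C_{1}\cup\cdots\cup C_{k}$.
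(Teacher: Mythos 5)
Your argument is correct. Note that the paper itself states this lemma without proof, recalling it from the snake-graph literature (Musiker--Schiffler--Williams, Huang), where the standard justification is exactly the one you give: since $P$ and $P_-$ are perfect matchings, every vertex meets one edge of each, so the symmetric difference has all degrees $0$ or $2$ and is a disjoint union of (even, alternating) cycles; and because the snake graph with its tiles filled in is a disk, each such $\mathbb{Z}/2$ $1$-cycle bounds a $2$-chain of tiles, so the symmetric difference is precisely the set of edges incident to exactly one enclosed tile, i.e.\ the boundary edges of the corresponding subgraph. Your homological phrasing and the alternative ``odd winding'' bookkeeping are both fine ways to make the enclosure step rigorous, so there is nothing to correct.
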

\begin{definition}
	\begin{enumerate}
		\item If $\gamma$ is an arc and $\tau_{i_1},\cdots,\tau_{i_d}$ is the sequence of arcs in $\Gamma$ which $\gamma$ crosses, then we define the $crossing$ $monomial$ of $\gamma$ with respect to $\Gamma$ to be
		\begin{equation*}
			c(\gamma,\Gamma)=\prod_{j=1}^{d}x_{\tau_{i_j}}
		\end{equation*}
		\item Let $P\in \mathcal{P}(G_{\Gamma,\gamma})$. If the edges of $P$ are labeled $\tau_{j_1},\dots,\tau_{j_r}$, then we define the $weight$ $w(P)$ of $P$ to be
		\begin{equation*}
			w(P)=x_{\tau_{j_1}}\cdots x_{\tau_{j_r}}
		\end{equation*}
	\end{enumerate}
	Note that $x_\alpha=1$ if $\alpha$ is a boundary arc.
\end{definition}
\begin{definition}
	The cluster monomial $x(P)$ associated with $P$ is
	\begin{equation*}
		x(P)=\frac{w(P)}{c(\gamma,\Gamma)}
	\end{equation*} 
\end{definition}
\begin{remark}
	For any $P\in \mathcal{P}(G_{\Gamma,\gamma})$, we have an element $x(P)=x^{a(P)}$ where $a(P)\in \mathbb{Z}^n$. Let $X(P)=X^{a(P)}$ be the unique quantum cluster monomial in $\mathcal{A}_q$.
\end{remark}
\begin{theorem}\label{mapv}
	There exists a unique valuation map $v:\mathcal{P}(G_{\Gamma,\gamma})\rightarrow \mathbb{Z}$ such that
	\begin{enumerate}
		\item[(a)] (initial conditions) $v(P_-(G_{\Gamma,\gamma}))=v(P_+(G_{\Gamma,\gamma}))=0$
		\item[(b)] (iterated relation) If $P\in \mathcal{P}(G_{\Gamma,\gamma})$ can twist on a tile $G(p)$, then
		\begin{equation*}
			v(P)-v(\mu_p(P))=\Omega(p,P)
		\end{equation*}
	\end{enumerate}
\end{theorem}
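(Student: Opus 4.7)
The uniqueness part of the statement is essentially free: by the lemma that any two perfect matchings of $G_{\Gamma,\gamma}$ are connected by a sequence of twists, the initial value $v(P_-(G_{\Gamma,\gamma}))=0$ together with the iterated relation (b) determines $v(P)$ for every $P\in\mathcal{P}(G_{\Gamma,\gamma})$. So the real content of the theorem is existence: the prescribed increments must be \emph{consistent}, meaning that any two twist-sequences from $P_-$ to a given $P$ must produce the same integer, and moreover the value at $P_+$ produced this way must equal zero.

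The plan for existence is as follows. First I would set $v(P_-(G_{\Gamma,\gamma}))=0$ and, for any matching $P$, pick a sequence of twists $P_-\to P^{(1)}\to\cdots\to P^{(k)}=P$ on tiles $G(p_{s_1}),\dots,G(p_{s_k})$ and \emph{define}
\begin{equation*}
v(P)=\sum_{l=1}^{k}\Omega(p_{s_l},P^{(l-1)}).
\end{equation*}
Well-definedness then reduces to showing that the sum vanishes on closed loops in the twist graph whose vertices are $\mathcal{P}(G_{\Gamma,\gamma})$. By a standard argument, it suffices to verify this on the generators of the fundamental groupoid of that graph, i.e.\ (i) on the obvious two-cycles $P\to\mu_p(P)\to P$, and (ii) on the ``commuting square'' cycles $P\to\mu_p(P)\to\mu_q\mu_p(P)=\mu_p\mu_q(P)\to\mu_q(P)\to P$ when $P$ can twist on two different tiles $G(p)$ and $G(q)$ simultaneously.

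Case (i) is immediate from Definition \ref{omega}: if after one twist the edges $a_{2_s},a_{4_s}$ of $G(p_s)$ appear in the matching, then after twisting back they do not, so the sign in front of the bracket flips while the quantities $n^{\pm}_{p_s}(\tau_{i_s},\cdot)$ and $m^{\pm}_{p_s}(\tau_{i_s},\gamma)$ are unchanged on $G^{\pm}_{p_s}$. Case (ii), the commuting square, is the main obstacle. Here I would split into subcases according to the relative positions of the tiles $G(p)$ and $G(q)$: if $|s_p-s_q|\ge 2$ the tiles are disjoint and $\Omega(q,\cdot)$ is unaffected by a twist at $p$ (the edges that change lie outside $G^{\pm}_q$); if $|s_p-s_q|=1$ the two tiles share an edge, and one has to track how the shared edge contributes to $n^{\pm}$ at either tile, using the criterion in the remark after the previous lemma about boundary edges in the minimal/maximal matching. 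In both subcases the goal is a local identity
\begin{equation*}
\Omega(p,P)+\Omega(q,\mu_p(P))=\Omega(q,P)+\Omega(p,\mu_q(P)),
\end{equation*}
which is a finite check of the terms $n^{\pm}_{p_s}(\tau_{i_s},\cdot)-m^{\pm}_{p_s}(\tau_{i_s},\gamma)$ on the four matchings involved.

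Finally, to verify the second initial condition $v(P_+(G_{\Gamma,\gamma}))=0$, I would use a canonical twist-sequence from $P_-$ to $P_+$, for example the one that twists tiles in increasing order of index (each tile becoming twistable once its predecessor has been). Along this path the $n^{\pm}_{p_s}$ and $m^{\pm}_{p_s}$ terms can be paired and telescoped; the alternating sign convention in Definition \ref{omega}, together with the fact noted in the remark that maximal/minimal boundary edges switch sides of the diagonal as parity of $j$ changes, forces the total sum to collapse to zero. Combining these three steps gives a well-defined $v$ satisfying (a) and (b), completing the proof.
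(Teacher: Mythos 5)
First, note that the paper does not actually prove Theorem \ref{mapv}: it is recalled from \cite{Huang2021AnEF} (the section explicitly says it is recalling the key results of that paper), so there is no in-text proof to match your argument against; your proposal has to stand on its own, and as written it has real gaps. The uniqueness half is fine and is exactly what the connectivity lemma (any two matchings are related by twists) gives. For existence, however, your reduction of well-definedness to two-cycles and ``commuting squares'' is asserted only via ``a standard argument''; for an arbitrary graph the cycle space is not generated by $2$- and $4$-cycles, so this step needs the specific structure of $\mathcal{P}(G_{\Gamma,\gamma})$ (e.g.\ its description as a distributive lattice of order ideals via the symmetric difference with $P_-$, or an induction on the number of tiles). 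That structural input is the crux of the existence statement and is missing.

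Second, your treatment of the square case contains an incorrect claim. When $|s_p-s_q|\ge 2$ the tiles are disjoint, but the tile $G(p)$ lies inside $G^{+}_{q}$ or $G^{-}_{q}$, and the quantities $n^{\pm}_{q}(\tau_{i_q},\cdot)$ count edges labeled $\tau_{i_q}$ over all of $G^{\pm}_{q}$; since labels recur along the snake graph (for instance when $\gamma$ crosses $\tau_{i_q}$ several times, or when $\tau_{i_q}$ is a side of the triangles forming the distant tile), a twist at $G(p)$ can change $n^{\pm}_{q}(\tau_{i_q},\cdot)$, so $\Omega(q,\cdot)$ is \emph{not} automatically unaffected. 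Hence the exchange identity $\Omega(p,P)+\Omega(q,\mu_p(P))=\Omega(q,P)+\Omega(p,\mu_q(P))$ requires a genuine case analysis in the ``disjoint'' case too, not only when the tiles are adjacent, and you do not carry out either computation. Finally, the second initial condition $v(P_+(G_{\Gamma,\gamma}))=0$ is an additional compatibility (once $v(P_-)=0$ and connectivity force uniqueness, it cannot simply be imposed), and your ``pair and telescope'' claim along a canonical twist sequence is not substantiated; this is precisely the kind of bookkeeping with $n^{\pm}$, $m^{\pm}$ and the parity behaviour of minimal/maximal edges that has to be done explicitly. In short, the skeleton (define $v$ along twist sequences, check loop consistency, check $P_+$) is reasonable, but the three load-bearing steps are either unproved or, in the disjoint-tile subcase, based on a false assertion.
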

\begin{theorem}[\cite{Huang2021AnEF}]
	If $\gamma$ is an arc in $(S,M)$, then the quantum Laurent expansion formula of $X_\gamma$ with respect to $\Gamma$ is
	\begin{equation*}
		X_\gamma=\sum_{P\in \mathcal{P}(G_{\Gamma,\gamma})}q^{\frac{v(P)}{2}}X(P)
	\end{equation*}
\end{theorem}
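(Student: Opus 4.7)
The plan is to prove the expansion by induction on the number $d$ of transverse intersections of $\gamma$ with $\Gamma$. The base case $d=0$ is immediate: if $\gamma=\tau_i\in\Gamma$, then $G_{\Gamma,\gamma}$ consists of a single edge labeled $\gamma$ with a unique perfect matching $P$, and by the initial condition of Theorem~\ref{mapv} we have $v(P)=0$; moreover $c(\gamma,\Gamma)=1$ and $w(P)=x_{\tau_i}$, so $X(P)=X^{e_i}=X_\gamma$. If $\gamma$ is a boundary arc, both sides reduce to $1$ by convention.

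For $d\ge 1$, I would fix an arc $\tau=\tau_{i_k}\in\Gamma$ that $\gamma$ crosses and let $\Gamma^{*}$ be the triangulation obtained by flipping $\tau$ to a new arc $\tau^{*}$. Since flips of an unpunctured triangulation correspond to quantum mutations of the seed at the vertex labeled $\tau$ in $Q(\Gamma)$ \cite{10.1007/s11511-008-0030-7,BERENSTEIN2005405}, the quantum exchange relation takes the explicit form
\begin{equation*}
    X_{\tau}X_{\tau^{*}}=q^{\alpha_{+}}X^{m_{+}}+q^{\alpha_{-}}X^{m_{-}},
\end{equation*}
where $m_{\pm}$ are the two monomials read from the quadrilateral containing $\tau$. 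The arc $\gamma$ crosses $\Gamma^{*}$ strictly fewer times than it crosses $\Gamma$, so the induction hypothesis gives the claimed expansion of $X_{\gamma}$ as a quantum Laurent polynomial in the initial variables attached to $\Gamma^{*}$.

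The combinatorial core of the proof is a bijection between perfect matchings before and after the flip. Following the classical analysis of \cite{MUSIKER20112241}, one decomposes $\mathcal{P}(G_{\Gamma^{*},\gamma})$ into two sub-families according to which local smoothing of $\gamma$ near $\tau$ is chosen; each sub-family pairs up with those matchings of $\mathcal{P}(G_{\Gamma,\gamma})$ that are related by a twist on the tile $G(p_k)$. Substituting $X_{\tau^{*}}=X_{\tau}^{-1}(q^{\alpha_{+}}X^{m_{+}}+q^{\alpha_{-}}X^{m_{-}})$ into the $\Gamma^{*}$-expansion and regrouping via this bijection produces a sum indexed by $\mathcal{P}(G_{\Gamma,\gamma})$. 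It then remains to verify that (i) the underlying classical monomial $X(P)$ is correct, which is exactly the Musiker--Schiffler--Williams identity, and (ii) the quantum exponents coincide, which reduces to the local identity $v(P)-v(\mu_{p_k}(P))=\Omega(p_k,P)$ of Theorem~\ref{mapv} combined with the values of $\alpha_{\pm}$ coming from the exchange relation.

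The hard part is the $q$-bookkeeping in step (ii). A flip at $\tau$ changes the orientation conventions for the tiles on one side of $G(p_k)$, alters the diagonal counts $m^{\pm}_{p_s}$ at tiles sharing the label $\tau$, and shifts the matching counts $n^{\pm}_{p_s}$. One must show that these contributions telescope — matchings away from $p_k$ remain naturally indexed, while the local change at $p_k$ is precisely what $\Omega(p_k,\cdot)$ records — so that after summing over all matchings the quantum corrections assemble into the factors $q^{\alpha_{\pm}}$ demanded by the mutation. As consistency checks, setting $q=1$ recovers the classical expansion of \cite{MUSIKER20112241}, and the bar-invariance of the right-hand side (forced by $v(P_{+})=v(P_{-})=0$ together with the antisymmetry of $\Omega$ under twisting) matches the known bar-invariance of $X_{\gamma}$.
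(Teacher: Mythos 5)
First, note that the paper you are working from does not prove this theorem at all: it is quoted verbatim from \cite{Huang2021AnEF} as background (the paper's contribution is to reinterpret the map $v$ module-theoretically, not to reprove the expansion formula). So there is no internal proof to compare with, and your proposal has to stand on its own as a proof of Huang's theorem. As it stands it is a plan rather than a proof, and the places where it is vague are exactly the places where the real work lies.

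Concretely: (a) the induction step assumes that flipping an arc $\tau$ crossed by $\gamma$ strictly decreases the number of crossings; this is false for an arbitrary crossed arc, and even the existence of a crossing-reducing flip is a nontrivial lemma that you neither state nor prove, so the induction is not set up correctly. (b) The ``combinatorial core'' is asserted, not established: the relation between $\mathcal{P}(G_{\Gamma^{*},\gamma})$ and $\mathcal{P}(G_{\Gamma,\gamma})$ under a flip is not a simple pairing by twists at $G(p_k)$ --- after substituting the exchange relation one gets more terms than matchings and must control cancellations (this is the content of snake-graph calculus in the classical setting), so ``regrouping via this bijection'' hides the main combinatorial argument even at $q=1$. (c) The entire quantum content --- that the exponents $v(P)$, defined via Theorem \ref{mapv} with respect to $\Gamma$, transform under the flip in a way compatible with the exponents $\alpha_{\pm}$ of the quantum exchange relation, and that the corrections ``telescope'' --- is precisely what has to be computed, and you explicitly defer it; moreover the substitution $X_{\tau^{*}}=X_{\tau}^{-1}(q^{\alpha_{+}}X^{m_{+}}+q^{\alpha_{-}}X^{m_{-}})$ in the quantum torus requires tracking $q$-commutation with every other factor $X^{a(P)}$, which your bookkeeping does not address. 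Finally, the closing claim that bar-invariance of the right-hand side is ``forced'' by $v(P_{\pm})=0$ and the antisymmetry of $\Omega$ needs an actual involution on matchings realizing $v\mapsto -v$ with $X(P)$ fixed; it is true but not automatic. In short, the strategy is reasonable and resembles arguments in the literature, but steps (a)--(c) are genuine gaps rather than routine verifications.
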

\subsection{Snake graphs and string modules} The theory of snake graphs is closely related to string modules (see \cite{Brstle2011AMI}, \cite{CANAKCI2021102094}). Here we recall some results in their papers. More specifically, there exists a bijection between string modules and snake graphs, up to a certain equivalence.
\begin{itemize}
	\item (From strings to snake graphs) Given a string $w$. If $w$ is a vertex, then the corresponding snake graph is a single tile. Otherwise, suppose $w=a_1\cdots a_n$ with $a_i\in \left\{\rightarrow,\leftarrow\right\}$.
	\begin{equation*}
		x_1\stackrel{a_1}{-}x_2\stackrel{a_2}{-}\cdots x_s\stackrel{a_s}{-}x_{s+1}
	\end{equation*}
	Then, for each vertex $x_i$, there is a tile $G(i)$ with diagonal $x_i$. If $a_1=\rightarrow$, then $G(2)$ is to the right of $G(1)$; otherwise, it is above $G(1)$. If $a_{i+1}$ and $a_{i}$ have the same direction, then $G(i),G(i+1),G(i+2)$ are zigzag. Otherwise, they are straight. Gluing all the $G(i)$, we get the snake graph $G(w)$.
	\item (From snake graphs to strings) For any snake graph $G$ with tiles $G(1),\cdots,G(s)$, the string $w$ is the concatenation of the $a_i$ connecting the diagonals of $G(i)$ and $G(i+1)$. If $G(2)$ is to the right of $G(1)$, then $a_1=G(1)\rightarrow G(2)$. Otherwise, $a_1=G(1)\leftarrow G(2)$. If $G(i),G(i+1),G(i+2)$ are zigzag, then $a_i$ and $a_{i+1}$ have the same direction.  Otherwise, they are inverses to each other.
\end{itemize}
In addition, if there exists an arc $\gamma$ in $(S,M)$ whose string is $w$, the snake graph $G(w)$ and $G_{\Gamma,\gamma}$ coincide. Moreover, this construction induces a bijection between the canonical submodules of $M(w)$ and perfect matchings of $G(w)$. More specifically, any perfect matching $P$ determines an index set in the following way: the symmetric difference $(P_-(G_{w})\cup P)\backslash(P_-(G_{w})\cap P)$ forms a set of enclosed tiles of $G(w)$. The index set corresponding to these tiles is the index set of the canonical submodule $M(P)$.
\begin{theorem}[\cite{CANAKCI2021102094}]
	The map $P\mapsto M(P)$ is a bijection from $\mathcal{P}(G(w))$ to $CS(M(w))$. 
\end{theorem}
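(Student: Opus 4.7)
The plan is to establish the bijection by factoring through the index-set description of canonical submodules given by the previous lemma. Define $\psi: \mathcal{P}(G(w)) \to S(w)$ by sending $P$ to the index set of tiles enclosed by the cycles of the symmetric difference $(P_-(G(w)) \cup P) \setminus (P_-(G(w)) \cap P)$; then the proposed map $P \mapsto M(P)$ factors as $f \circ \psi$, where $f: S(w) \to CS(M(w))$ is the bijection from the previous lemma. So it suffices to show $\psi$ is a bijection onto $S(w)$.

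First I would verify well-definedness: show $\psi(P) \in S(w)$. By the symmetric difference lemma, the symmetric difference is a disjoint union of cycles, and in a snake graph each such cycle bounds a connected ``block'' of consecutive tiles, say indexed by an interval $[i,j]$. Because distinct cycles bound disjoint tile regions and the tiles separating two blocks cannot themselves touch (otherwise the cycles would share a vertex), the maximal intervals arising from $\psi(P)$ automatically satisfy the gap condition $\max I_l + 2 \le \min I_{l+1}$. For the interval block itself, the key computation is that along the boundary cycle, the zigzag/straight pattern of the snake graph, which by the snake-graphs-to-strings dictionary records the arrow directions $a_i$ of $w$, forces the substring $w_{[i,j]}$ to be of a ``submodule-compatible'' type: the arrows at the two endpoints point inward, so no forbidden pattern (an outgoing arrow from a vertex in $I$ to a vertex outside $I$) can occur at the boundary of the block. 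This is the central step and requires a careful case analysis of how the edges of the two tiles $G(i-1), G(i)$ (respectively $G(j), G(j+1)$) meet and which of those edges lie in $P_-$.

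Next I would construct the inverse. Given $I = I_1 \sqcup \cdots \sqcup I_t \in S(w)$, form $P$ from $P_-(G(w))$ by toggling, for each $I_l$, the matching along the boundary cycle of the sub-snake-graph spanned by the tiles indexed by $I_l$. The gap condition on the $I_l$ ensures these toggles happen on disjoint regions, so the result is again a perfect matching; by construction $\psi(P) = I$. Injectivity of $\psi$ is then immediate: if $\psi(P) = \psi(Q)$, both symmetric differences are the same union of boundary cycles, so $P \triangle Q = \emptyset$ and $P = Q$.

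The main obstacle is the well-definedness step, because it is where the combinatorial data of the snake graph (a union of cycles bounding tile blocks) must be translated into the algebraic data of a submodule (no arrow exits $I$ against its direction). This amounts to checking, at each of the two endpoints of a block and at each gap between consecutive blocks, that the local snake-graph gluing pattern dictated by the ambient string $w$ is exactly the one that places the relevant edges of $P_-$ so that toggling the enclosed region gives a matching and that the boundary behavior matches the submodule criterion recalled after the construction of $S(w)$. Once this local analysis is done, the global bijection follows formally.
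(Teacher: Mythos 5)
The paper does not prove this statement: it is imported from \cite{CANAKCI2021102094}, with only the construction of $M(P)$ via the symmetric difference with $P_-(G(w))$ recalled beforehand, so there is no internal proof to measure yours against; the comparison can only be with the standard argument of the cited reference, which your outline indeed follows. The global skeleton you give is sound: factoring through the lemma identifying $CS(M(w))$ with $S(w)$, reading off the tiles enclosed by the cycles of $(P_-(G(w))\cup P)\setminus(P_-(G(w))\cap P)$, deducing the gap condition $\max I_l+2\le\min I_{l+1}$ from the fact that two cycles enclosing adjacent blocks would both have to pass through the edge shared by the last tile of one block and the first tile of the next (contradicting that the symmetric difference of two matchings has all vertices of degree at most $2$), recovering injectivity because a connected block of tiles determines its boundary cycle, and producing the inverse by toggling $P_-$ along the block boundaries.

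The caveat is that essentially all of the content of the theorem sits in the local analysis you defer. That case check must deliver two things, and you should state both explicitly: (i) for a maximal enclosed interval $[i,j]$, the gluing pattern of $G(i-1),G(i)$ and of $G(j),G(j+1)$, together with which of their edges lie in $P_-$, forces the closure condition making $w_{[i,j]}$ a substring whose module embeds in $M(w)$ --- and here the specific convention in the definition of $P_-$ (the boundary matching avoiding the two edges counterclockwise from the first diagonal) is what decides that one obtains submodules rather than the dual factor modules, so the orientation bookkeeping cannot be waved away; and (ii) conversely, for $I\in S(w)$ the boundary cycle of each block is $P_-$-alternating (every vertex of the block is matched by $P_-$ along that cycle), which is exactly what makes your toggle return a perfect matching. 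Both points reduce to finitely many local configurations of the kind this paper itself works through, with figures, in its module-interpretation section, so the check is routine --- but until it is written out, what you have is a correct plan rather than a complete proof.
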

Under this bijection, we identify perfect matchings and canonical submodules. Moreover, by the property of the $g$-vector, $x(P)$ has the following factorization
\begin{equation*}
	x(P)=x(P_-(G(w)))x^{B_\Gamma e}
\end{equation*}
where $e=(e_1,e_2,\dots,e_n)$, and $e_i$ is the number of tiles in the symmetric difference whose diagonal is labeled $\tau_i$. Also, $e$ is the dimension vector of $M(P)$. If $\gamma$ is the arc with string $w$, then the $g$-vector of $\gamma$ is given by
\begin{equation*}
	x^{g(\gamma)}=x(P_-(G(w)))
\end{equation*}
The $g$-vector is the index of $M(w)$.
\begin{equation*}
	g(\gamma)=ind_\Gamma(M(w))
\end{equation*}
Denote $x^\Gamma(P)=x^{ind_{\Gamma}(M(w))+B_\Gamma dimP}$, and let $X^\Gamma(P)$ be the corresponding element in the quantum cluster algebra, then
\begin{equation*}
	x(P)=x^\Gamma(P)
\end{equation*}
and the quantum version $X(P)=X^\Gamma(P)$.

\section{MODULE INTERPRETATION}
In this section, we provide a module interpretation of the map $v$ in \ref{mapv}. The key observation is the following:
\begin{itemize}
	\item The snake graphs correspond to the string modules.
	\item The perfect matchings correspond to canonical submodules.
	\item The number of edges labeled $\tau_k$ corresponds to the index of canonical submodules.
\end{itemize}
For any arc $\gamma$ which does not belong to $\Gamma$, let $w=a_1\cdots a_s$ be its string and $G(w)$ be its snake graph. The quantum cluster variable corresponding to $\gamma$ is
\begin{equation*}
	X_{\gamma}=\sum_{P\in \mathcal{P}(G(w))}q^{\frac{v(P)}{2}}X(P)
\end{equation*}
Denote the vertices of $w$ by $x_1,x_2,\cdots,x_{s+1}$. Let $P$ be a perfect matching of $G(w)$. The minimal perfect matching of $G(w)$ is denoted by $P_-$. The symmetric difference provides a collection of enclosed tiles, which can give an index subset of $\left\{1,2,\dots,s\right\}$. Under the equivalence above, we still denote the index set of the canonical submodule $M(P)$ by $P$. First we need to provide an algorithm for the number of edges.
\par Let $\tau_k$ be a vertex of $Q$. Then after choosing a suitable label the subquiver of the neighborhood of $\tau_k$ and the triangle in the surface are shown in Figure \ref{quiver}. Note that some vertices may be frozen.
\begin{figure}[htb]
	\centering
	\includegraphics[scale=0.2]{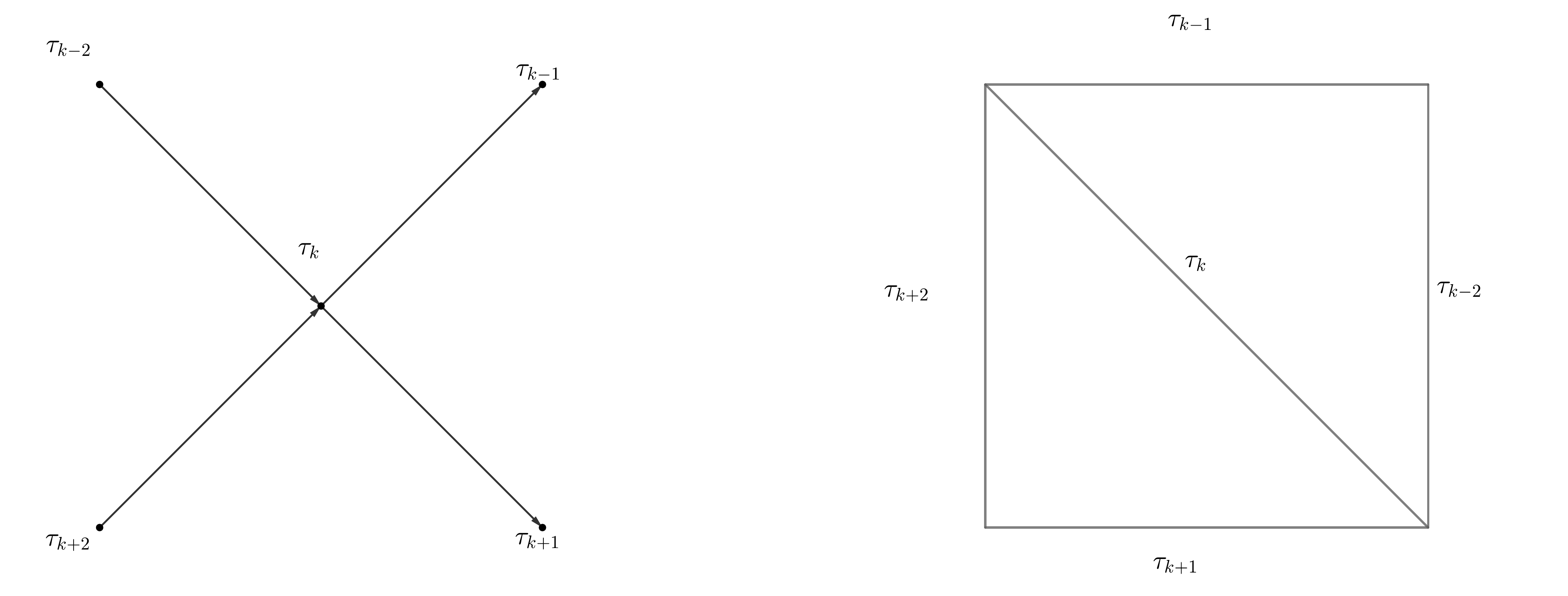}
	\caption{Neighborhood of $\tau_k$}
	\label{quiver}
\end{figure}

\begin{lemma}
	If there exists some $j$ with $2\leq j\leq s-1$ satisfying $x_j=\tau_k$ and $a_{j-1}$ and $a_j$ are both inverse, define two quantities to count the number of edges labeled $\tau_k$ in $E(G(j+1))\cap P$ and $E(G(j-1))\cap P$.
	\begin{equation*}
		n^+(\tau_k,j,P)=\left\{
		\begin{aligned}
			1 & & &if \ j+1\in P \\
			0 & & &otherwise
		\end{aligned}
		\right.  
	\end{equation*}
	\begin{equation*}
		n^-(\tau_k,j,P)=\left\{
		\begin{aligned}
			0 & & &if \ j-1\in P \\
			1 & & &otherwise
		\end{aligned}
		\right.  
	\end{equation*}
\end{lemma}
\begin{proof}
	Without loss of generality, assume $x_{j-1}=\tau_{k-1}$, $x_{j+1}=\tau_{k+2}$. The snake graph of $G(j-1),G(j),G(j+1)$ is shown in Figure \ref{2inv}. We want the notion $n^+$ (resp.$n^-$) to be the number of edges labeled $\tau_k$ belonging to $P$ on the tiles with diagonal $x_{j+1}$ (resp. $x_{j-1}$), and we only focus on the edges incident to the diagonal. We illustrate this definition when $j$ is odd(left) and the other case is similar. In this case, the blue edge always belongs to $P_-$ and the red edge cannot belong to $P_-$. Then $n^+$ just depends on whether the red edge belongs to $P$. According to the definition of symmetric difference, the red edge is in $P$ if and only if $j+1\in P$. For the same reason, the blue edge incident to $\tau_k$ belongs to $P$ if and only if $j-1\notin P$.
\end{proof}
\begin{remark}
	The quantities defined here are used to count the number of edges in an equivalence class defined in \cite{Huang2021AnEF}. There the edges incident to the same diagonal which in addition all have the same label are equivalent. So the number in this equivalence class is $n^++n^-$. The procedure requiring the count of rightward or leftward steps corresponds exactly to the definition of $G^+_{s}$ or $G^-_{s}$, respectively.
\end{remark}
\begin{figure}[htbp]
	\centering
	\includegraphics[scale=0.1]{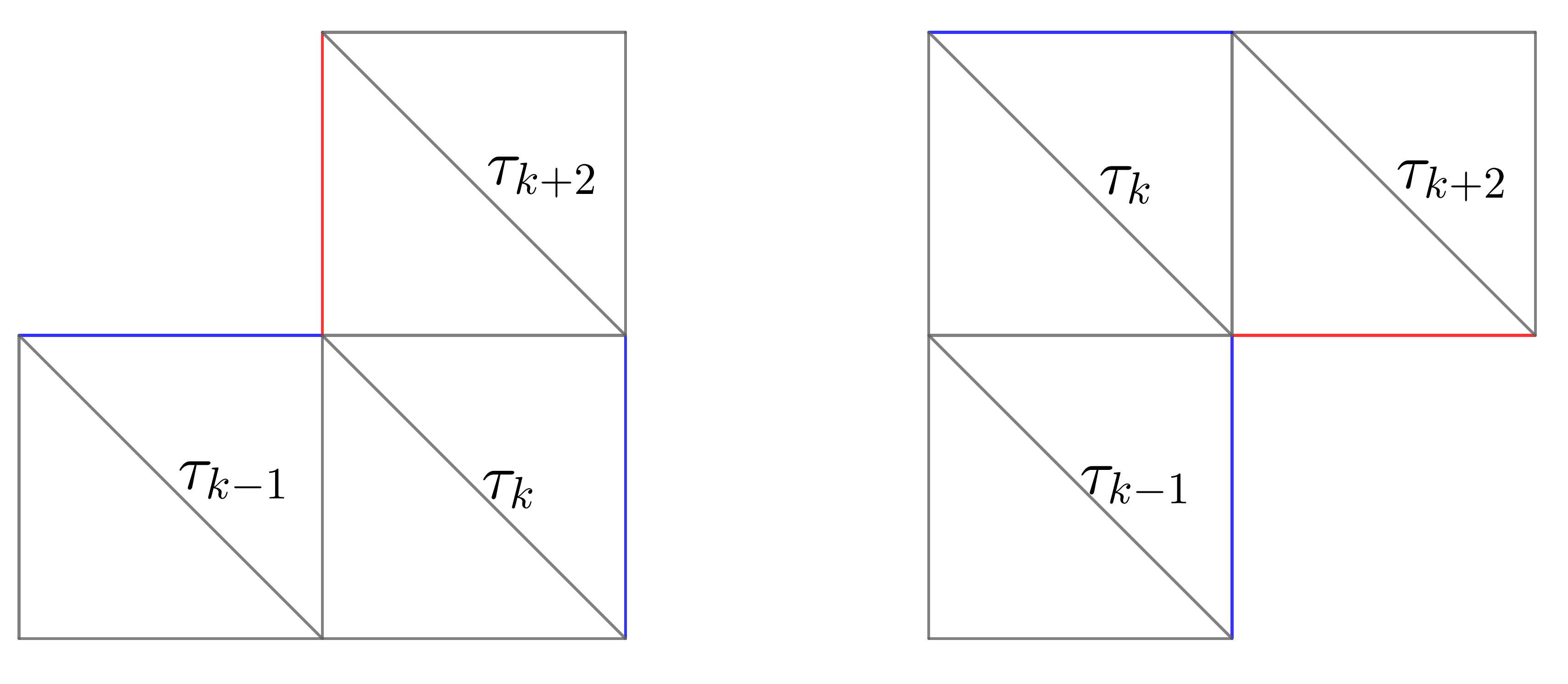}
	\caption{$a_{j-1}$ and $a_j$ both are inverse}
	\label{2inv}
\end{figure}

\begin{lemma}
	If there exists some $j$ with $2\leq j\leq s-1$ satisfying $x_j=\tau_k$ and $a_{j-1}$ and $a_j$ are both direct, define two quantities to count the number of edges labeled $\tau_k$ in $E(G(j+1))\cap P$ and $E(G(j-1))\cap P$.
	\begin{equation*}
		n^+(\tau_k,j,P)=\left\{
		\begin{aligned}
			0 & & &if \ j+1\in P \\
			1 & & &otherwise.
		\end{aligned}
		\right.  
	\end{equation*}
	\begin{equation*}
		n^-(\tau_k,j,P)=\left\{
		\begin{aligned}
			1 & & &if \ j-1\in P \\
			0 & & &otherwise
		\end{aligned}
		\right.  
	\end{equation*}
\end{lemma}
\begin{proof}
	Without loss of generality, assume $x_{j-1}=\tau_{k-2}$, $x_{j+1}=\tau_{k+1}$. The snake graph of $G(j-1),G(j),G(j+1)$ is shown in Figure \ref{2dir}. We want the notion $n^+$ (resp.$n^-$) to be the number of edges labeled $\tau_k$ belonging to $P$ on the tiles with diagonal $x_{j+1}$ (resp. $x_{j-1}$), and we only focus on the edges incident to the diagonal. We illustrate this definition when $j$ is odd (right) and the other case is similar. In this case, the blue edge always belongs to $P_-$ and the red edge cannot belong to $P_-$. Then $n^+$ just depends on whether the blue edge belongs to $P$. According to the definition of symmetric difference, the blue edge is in $P$ if and only if $j+1\notin P$. For the same reason, the red edge incident to $\tau_k$ belongs to $P$ if and only if $j-1\in P$.
\end{proof}
\begin{figure}[htbp]
	\centering
	\includegraphics[scale=0.1]{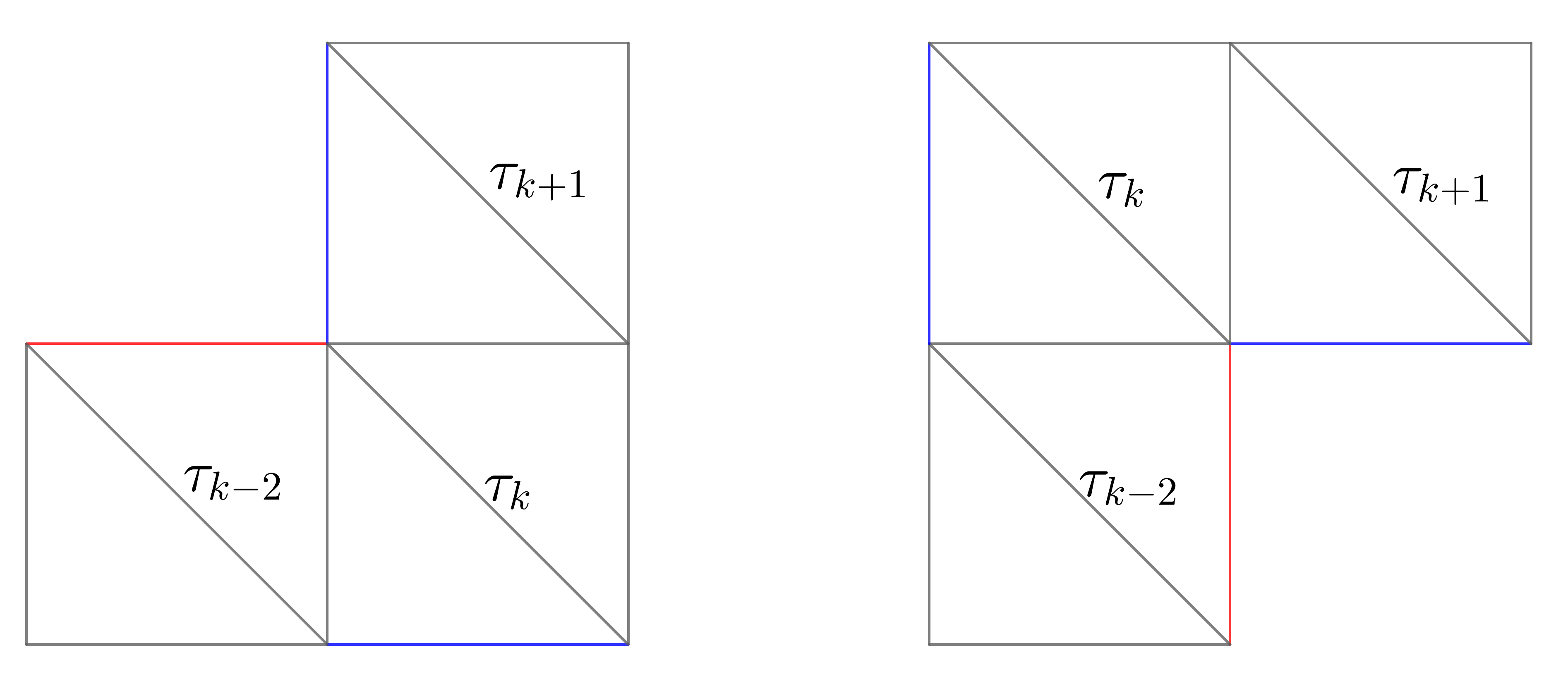}
	\caption{$a_{j-1}$ and $a_j$ both are direct}
	\label{2dir}
\end{figure}

\begin{lemma}
	If there exists some $j$ with $2\leq j\leq s-1$ satisfying $x_j=\tau_k$ and $a_{j-1}$ is inverse and $a_j$ is direct, define two quantities to count the number of edges labeled $\tau_k$ in $E(G(j+1))\cap P$ and $E(G(j-1))\cap P$.
	\begin{equation*}
		n^+(\tau_k,j,P)=\left\{
		\begin{aligned}
			0 \ \ \ &if\ \ j+1\in P \\
			1 \ \ \ &otherwise
		\end{aligned}
		\right.
	\end{equation*}
	\begin{equation*}
		n^-(\tau_k,j,P)=\left\{
		\begin{aligned}
			0 \ \ \ &if\ \ j-1\in P \\
			1 \ \ \ &otherwise
		\end{aligned}
		\right.
	\end{equation*}
\end{lemma}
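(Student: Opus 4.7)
The plan is to imitate the proof template used for the two preceding lemmas, adjusting for the fact that the local snake graph is now a \emph{straight piece} rather than a zigzag: since $a_{j-1}$ is inverse and $a_j$ is direct, they have different directions, so by the string-to-snake-graph dictionary recalled in Section 4.3 the three tiles $G(j-1),G(j),G(j+1)$ form a straight piece (either three tiles stacked vertically or three tiles glued horizontally, depending on the parity of $j$). I would first fix a concrete labeling of the neighboring arcs --- without loss of generality, say $x_{j-1}=\tau_{k-1}$ and $x_{j+1}=\tau_{k+1}$ --- and draw the corresponding three-tile straight piece, highlighting on each of the outer tiles $G(j-1)$ and $G(j+1)$ the unique edge labeled $\tau_k$ that is incident to the diagonal.

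Next I would split into the two cases $j$ odd and $j$ even (the argument in one case transfers to the other by the symmetry of the straight piece). In each case I apply the standard criterion reviewed in the remark after the definition of maximal/minimal matchings: an edge of $G(p_s)$ lies in $P_-$ iff it lies clockwise from the diagonal for $j$ odd, counterclockwise for $j$ even. Inspecting the picture of the straight piece, one finds that the $\tau_k$-edge on $G(j+1)$ incident to its diagonal and the $\tau_k$-edge on $G(j-1)$ incident to its diagonal \emph{both} satisfy this criterion, so both are blue in the terminology of the preceding proofs (i.e.\ both belong to $P_-$), and both of the other $\tau_k$-candidates are red (never in $P_-$). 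This is exactly the point at which the straight-piece geometry diverges from the zigzag geometry of the two earlier lemmas, and it is the one geometric verification that has to be done carefully.

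Once the colors are identified, the conclusion is immediate from the symmetric difference description of $P$: a blue edge on the tile $G(j\pm 1)$ belongs to $P$ iff the tile $G(j\pm1)$ is \emph{not} enclosed by $(P_-\cup P)\setminus(P_-\cap P)$, equivalently iff $j\pm 1\notin P$ under our identification of perfect matchings with index sets. Because $n^{\pm}(\tau_k,j,P)$ counts precisely whether this single blue edge lies in $P$ (there is no other $\tau_k$-labeled edge on $G(j\pm 1)$ incident to its diagonal that could contribute), we obtain
\begin{equation*}
n^{+}(\tau_k,j,P)=\begin{cases}0 & \text{if } j+1\in P,\\ 1 & \text{otherwise,}\end{cases}\qquad
n^{-}(\tau_k,j,P)=\begin{cases}0 & \text{if } j-1\in P,\\ 1 & \text{otherwise,}\end{cases}
\end{equation*}
which is exactly the claim.

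The only real obstacle is the picture-based verification that the $\tau_k$-edges incident to the diagonals of $G(j-1)$ and $G(j+1)$ both come out blue in the straight-piece case; the rest is a mechanical replay of the symmetric-difference argument already used twice above. I would therefore include a figure (analogous to Figures \ref{2inv} and \ref{2dir}) depicting the straight piece for $j$ odd with the blue/red edges marked, and refer to it in the proof.
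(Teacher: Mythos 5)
Your proposal takes essentially the same route as the paper's own proof: the same WLOG labeling $x_{j-1}=\tau_{k-1}$, $x_{j+1}=\tau_{k+1}$, the same local three-tile (straight-piece) picture showing that the unique $\tau_k$-labeled edge on each of $G(j-1)$ and $G(j+1)$ belongs to $P_-$, and the same symmetric-difference argument giving that such an edge lies in $P$ iff $j\pm 1\notin P$, which yields exactly the stated formulas for $n^{\pm}(\tau_k,j,P)$. The only difference is presentational --- you justify the ``blue'' colors via the parity (clockwise/counterclockwise) criterion with an odd/even case split, where the paper simply reads this off its figure --- so the proposal is correct.
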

\begin{proof}
	Without loss of generality, assume $x_{j-1}=\tau_{k-1}$, $x_{j+1}=\tau_{k+1}$. The snake graph of $G(j-1),G(j),G(j+1)$ is shown in Figure \ref{invdir}. The red edge always belongs to $P_-$ and $n^+$ just depends on whether the blue edge belongs to $P$. According to the definition of symmetric difference, the blue edge is in $P$ if and only if $j+1\notin P$. The description for $n^-$ is true for the same reason.
\end{proof}
\begin{figure}[htbp]
	\centering
	\includegraphics[scale=0.1]{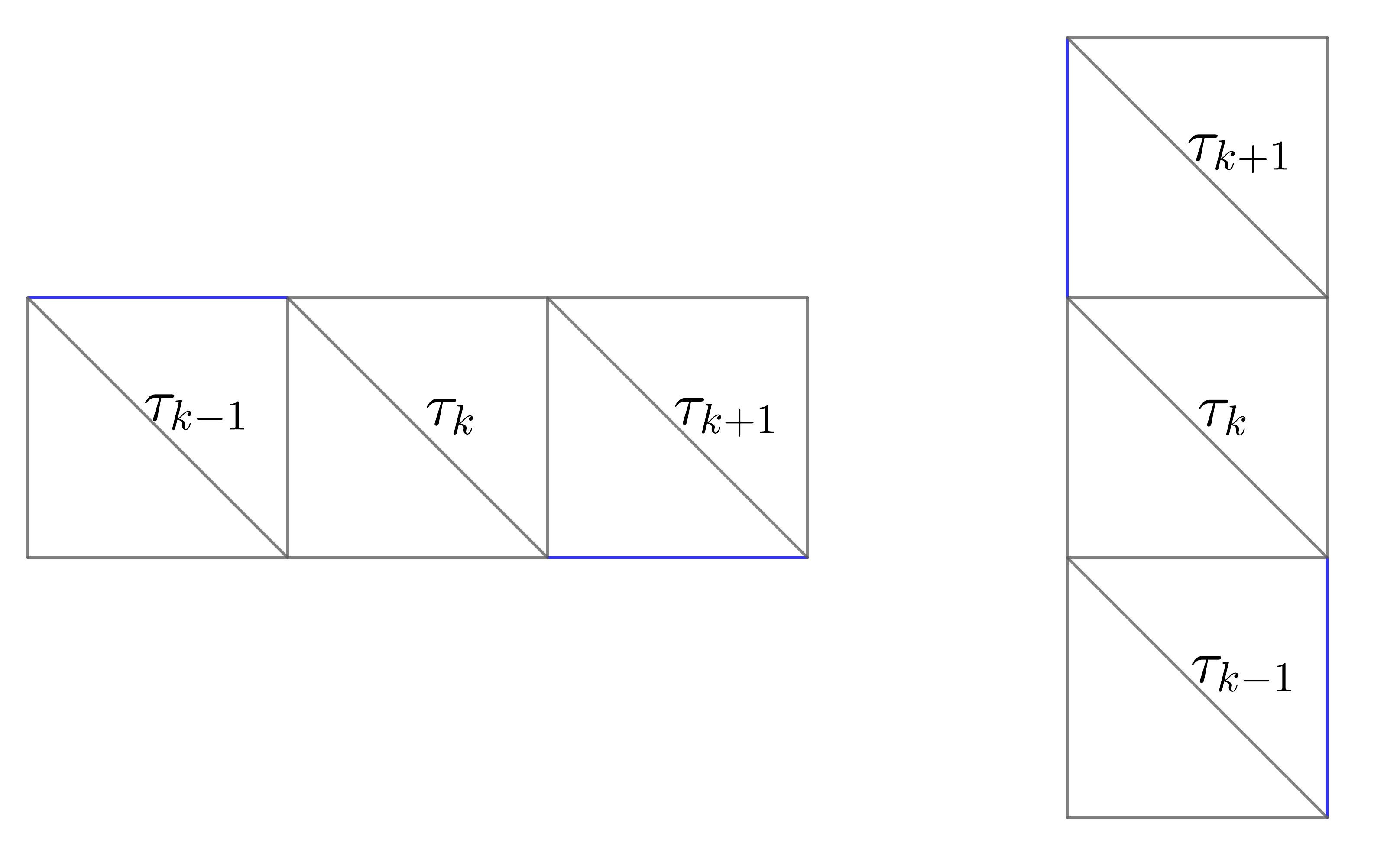}
	\caption{$x_{j-1}\leftarrow x_{j}\rightarrow x_{j+1}$}
	\label{invdir}
\end{figure}

\begin{lemma}
	If there exists some $j$ with $2\leq j\leq s-1$ satisfying $x_j=\tau_k$ and $a_{j-1}$ is direct and $a_j$ is inverse, define two quantities to count the number of edges labeled $\tau_k$ in $E(G(j+1))\cap P$ and $E(G(j-1))\cap P$.
	\begin{equation*}
		n^+(\tau_k,j,P)=\left\{
		\begin{aligned}
			1 \ \ \ &if\ \ j+1\in P \\
			0 \ \ \ &otherwise
		\end{aligned}
		\right.
	\end{equation*}
	\begin{equation*}
		n^-(\tau_k,j,P)=\left\{
		\begin{aligned}
			1 \ \ \ &if\ \ j-1\in P \\
			0 \ \ \ &otherwise
		\end{aligned}
		\right.
	\end{equation*}
\end{lemma}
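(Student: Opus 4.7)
The plan is to follow precisely the template established in the proofs of the three preceding lemmas. First, without loss of generality, I would fix labels for the neighbouring vertices consistent with Figure \ref{quiver} (for instance $x_{j-1} = \tau_{k-2}$ and $x_{j+1} = \tau_{k-1}$, or whichever choice matches the convention already in force) so that there is a unique edge labeled $\tau_k$ in each of the tiles $G(j-1)$ and $G(j+1)$ incident to the diagonal $x_j = \tau_k$. Using the gluing convention of the snake graph, with $a_{j-1}$ direct and $a_j$ inverse, $G(j)$ is placed to the right of $G(j-1)$ and $G(j+1)$ is placed on top of $G(j)$, giving the \emph{direct-then-inverse} local piece, which is the mirror image of the inverse-then-direct configuration treated in the immediately preceding lemma. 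I would include a figure of $G(j-1), G(j), G(j+1)$ paralleling Figure \ref{invdir}.

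Next I would apply the standard parity criterion recalled in the remark after the maximal/minimal-matching lemma: a boundary edge of tile $G(p_j)$ belongs to $P_-$ iff it lies clockwise from the diagonal when $j$ is odd, and counterclockwise when $j$ is even. Reading this off the local picture for the direct-then-inverse configuration, the two edges labeled $\tau_k$ incident to the diagonal of $G(j)$ both \emph{fail} to lie in $P_-$, in contrast to the previous (inverse-then-direct) case where both such edges lay in $P_-$. The cases $j$ odd and $j$ even give mirror pictures and the same conclusion. Thus both relevant edges are $\text{``red''}$ in the colouring used in the earlier proofs, which is the combinatorial reason the statement becomes symmetric, with $n^{\pm} = 1$ on precisely the same side as $j \pm 1 \in P$.

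Finally I would invoke the symmetric-difference characterisation already used in the earlier proofs: for a $\tau_k$-labeled edge $e$ of tile $G(i)$ incident to the diagonal $x_i$, the edge $e$ lies in $P$ iff exactly one of ``$e \in P_-$'' and ``$i$ lies in the symmetric difference $(P_- \cup P) \setminus (P_- \cap P)$'' holds; under the identification of this symmetric difference with the index set of the canonical submodule $M(P)$ from the Çanakcı--Schiffler bijection, this reads ``$e \in P$ iff $e \in P_-$ XOR $i \in P$''. Since both candidate edges are not in $P_-$, the $\tau_k$-edge of $G(j+1)$ belongs to $P$ iff $j+1 \in P$, giving $n^+(\tau_k,j,P) = 1$ exactly when $j+1 \in P$, and similarly $n^-(\tau_k,j,P) = 1$ exactly when $j-1 \in P$, as claimed.

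The main obstacle is purely bookkeeping rather than conceptual: one has to check carefully that the orientation of the local snake graph, combined with the parity of $j$, really places both relevant $\tau_k$-edges outside $P_-$. Once the figure is drawn correctly, the argument is a direct translation of the three preceding proofs and introduces no new ideas.
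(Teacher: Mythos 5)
Your proposal follows essentially the same route as the paper's proof: fix the neighbouring labels without loss of generality, read off from the local three-tile snake graph that both $\tau_k$-labeled edges of $G(j-1)$ and $G(j+1)$ lie outside the minimal matching $P_-$, and then apply the symmetric-difference characterisation to conclude that each such edge belongs to $P$ exactly when $j-1$ (resp.\ $j+1$) lies in the index set, which is precisely the argument given in the paper via Figure \ref{dirinv}. The only minor slip is your tentative labeling $x_{j+1}=\tau_{k-1}$ (under the paper's convention the forced choice is $x_{j-1}=\tau_{k-2}$, $x_{j+1}=\tau_{k+2}$), but since you explicitly defer to the convention in force and the argument is independent of this choice, it does not affect the proof.
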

\begin{proof}
	Without loss of generality, assume $x_{j-1}=\tau_{k-2}$, $x_{j+1}=\tau_{k+2}$. The snake graph of $G(j-1),G(j),G(j+1)$ is shown in Figure \ref{dirinv}. The blue edges always belong to $P_-$ and the red edges do not. $n^+$ just depends on whether the red edge belongs to $P$. According to the definition of symmetric difference, the red edge is in $P$ if and only if $j+1\in P$. The description for $n^-$ is true for the same reason.
\end{proof}
\begin{figure}[htbp]
	\centering
	\includegraphics[scale=0.1]{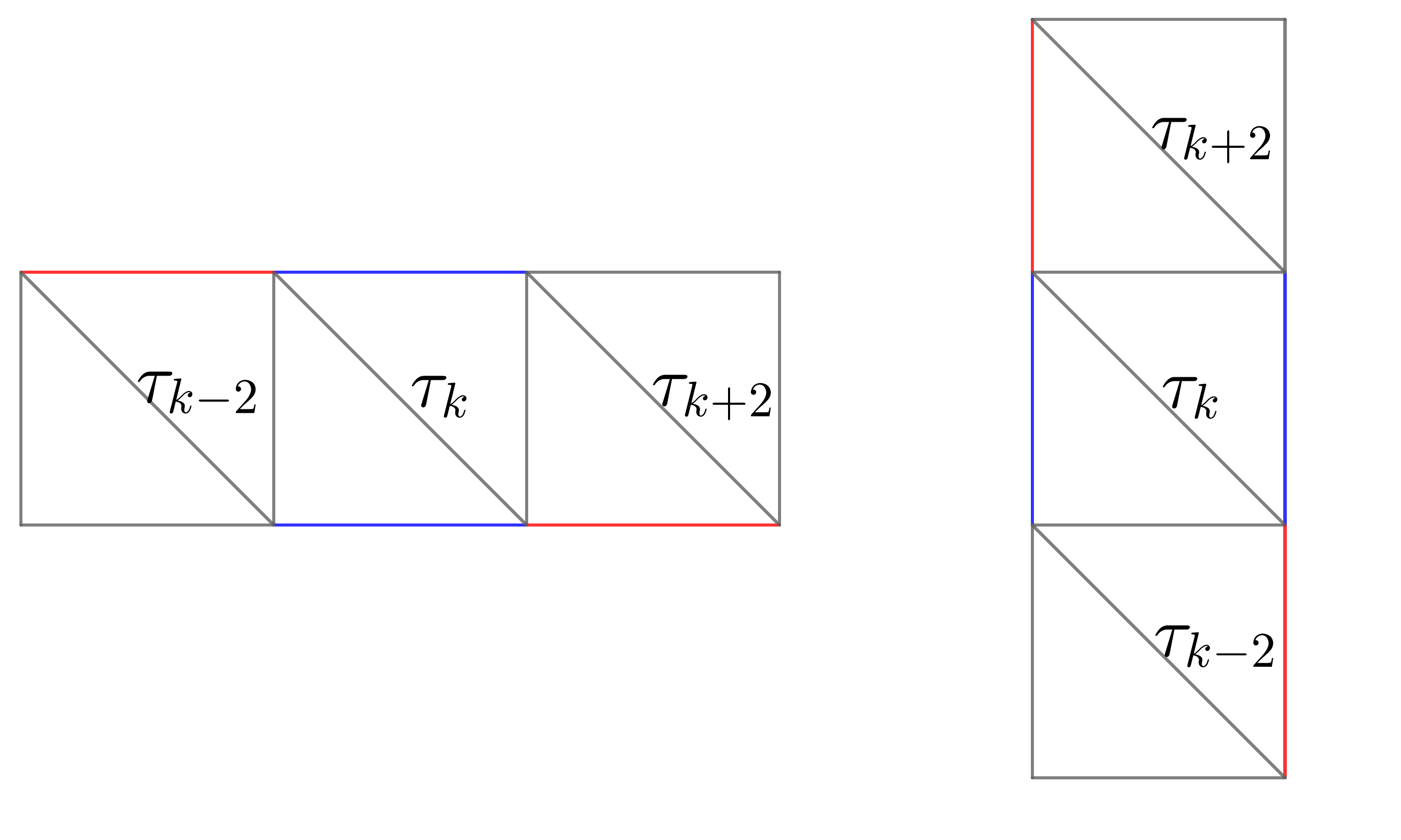}
	\caption{$x_{j-1}\rightarrow x_{j}\leftarrow x_{j+1}$}
	\label{dirinv}
\end{figure}

\begin{lemma}
	If there exists some $j$ satisfying $\left\{x_j,x_{j+1}\right\}=\left\{\tau_{k-1},\tau_{k-2}\right\}$ or $\left\{\tau_{k+1},\tau_{k+2}\right\}$, let $n(\tau_k,j,P)=1$ if exactly one of $j$ and $j+1$ belongs to $P$, otherwise $n(\tau_k,j,P)=0$. Then $n(\tau_k,j,P)$ counts the number of edges in $P$ labeled $\tau_k$ in these two tiles.
\end{lemma}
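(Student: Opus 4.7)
The plan is to reduce the claim to a single-edge verification. By symmetry I would only treat the case $\{x_j, x_{j+1}\} = \{\tau_{k-1}, \tau_{k-2}\}$; the other case will be analogous. After relabeling, say $\tau_{i_j} = \tau_{k-1}$ and $\tau_{i_{j+1}} = \tau_{k-2}$, so the triangle $\Delta_j$ in the surface containing both of these arcs has third side $\tau_k$. Consequently the edge $\tau_{[\gamma_j]}$ along which tiles $G(j)$ and $G(j+1)$ are glued in the snake graph is labeled $\tau_k$.

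The next step is to confirm that this gluing edge is the only edge labeled $\tau_k$ in the two-tile region $G(j)\cup G(j+1)$ that is not already accounted for by one of the preceding four lemmas. The other boundary edges of $G(j)$ lie on the $\Delta_{j-1}$-side and those of $G(j+1)$ lie on the $\Delta_{j+1}$-side; such an edge can carry the label $\tau_k$ only if $\tau_{i_{j-1}} = \tau_k$ or $\tau_{i_{j+2}} = \tau_k$, because otherwise the triangle $\Delta_{j-1}$ or $\Delta_{j+1}$ cannot contain the arc $\tau_k$ (two distinct triangles of a triangulation share at most one edge, and $\tau_k$ already sits inside $\Delta_j$). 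In those two exceptional configurations the incident edge will already be enumerated by $n^\pm(\tau_k, j-1, P)$ or $n^\pm(\tau_k, j+2, P)$ from the earlier lemmas, and so it does not belong to the present count.

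The rest is a direct application of the symmetric-difference description, parallel to the proofs of the previous four lemmas. Since $P_-$ contains only outer boundary edges of the snake graph, the interior gluing edge between $G(j)$ and $G(j+1)$ cannot lie in $P_-$. Hence the gluing edge lies in $P$ exactly when it lies in the symmetric difference $(P_-\cup P)\setminus(P_-\cap P)$. By the lemma identifying this symmetric difference with the boundary of the enclosed subgraph, an edge shared by $G(j)$ and $G(j+1)$ belongs to that boundary iff exactly one of these two tiles is enclosed; under the bijection with canonical submodules, tile $G(i)$ is enclosed iff $i\in P$. Therefore the shared $\tau_k$-edge is in $P$ iff exactly one of $j$ and $j+1$ belongs to $P$, matching the definition of $n(\tau_k, j, P)$.

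The step I expect to require the most care is the exhaustion in the second paragraph: one must argue that no additional $\tau_k$-labeled edge can appear in the two-tile piece without being part of a case handled elsewhere, which forces a small case analysis on the neighboring triangles $\Delta_{j-1}$ and $\Delta_{j+1}$. The rest is a routine translation through the symmetric-difference dictionary already used in the preceding lemmas.
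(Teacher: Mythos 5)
Your argument is correct and follows essentially the same route as the paper: identify the $\tau_k$-labeled edge as the interior gluing edge of $G(j)$ and $G(j+1)$, observe it can never lie in $P_-$ (so membership in $P$ equals membership in the symmetric difference), and conclude via the enclosed-tiles description that it lies in $P$ iff exactly one of $j$, $j+1$ is in the index set. The paper carries this out by inspection of Figure~\ref{2inc}; your additional paragraph verifying that no other $\tau_k$-labeled edge can occur in these two tiles (or else it is counted by the neighboring lemmas) is a detail the paper leaves implicit in the figure, and is a welcome clarification rather than a divergence.
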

\begin{proof}
	Without loss of generality, assume $x_{j}=\tau_{k-1}$, $x_{j+1}=\tau_{k-2}$. The snake graph of $G(j-1),G(j),G(j+1)$ is shown in Figure \ref{2inc}. The blue edges do not belong to $P_-$. $n(\tau_k,j,P)$ just depends on whether the blue edge belongs to $P$. According to the definition of symmetric difference, the blue edge is in $P$ if and only if exactly one of $j$ and $j+1$ belongs to $P$.
\end{proof}
\begin{figure}[htbp]
	\centering
	\includegraphics[scale=0.1]{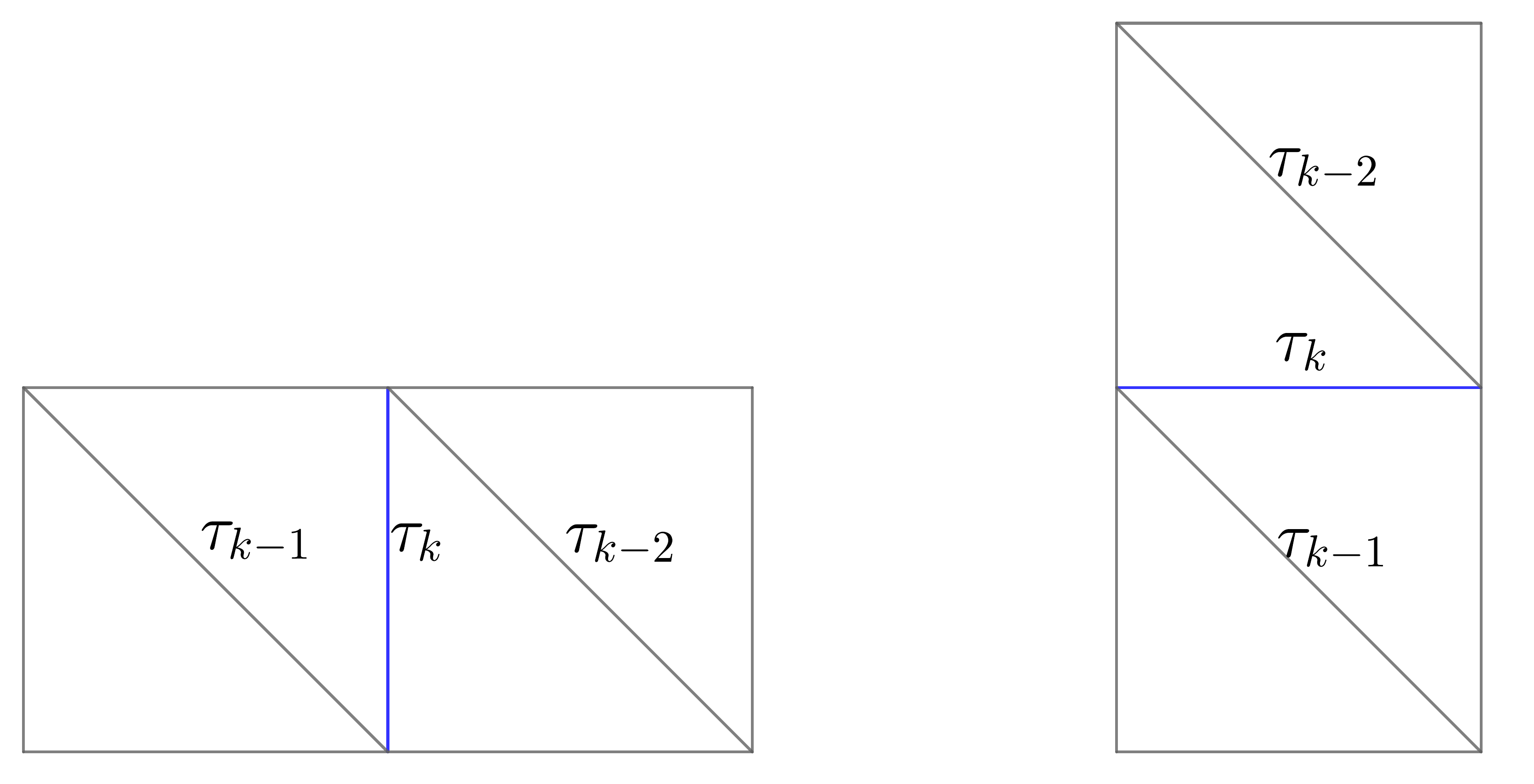}
	\caption{$x_{j}=\tau_{k-1}$, $x_{j+1}=\tau_{k-2}$}
	\label{2inc}
\end{figure}
In the following, we need to consider the first or last tile and count the number of edges labeled $\tau_k$ in these tiles. The strategy is totally the same as above. We show some examples in Figure \ref{firstnot}. For example, if the first tile has diagonal $\tau_{k-1}$ and it cannot be any case above, then the blue edge labeled $\tau_k$ belongs to $P_-$. So whether it belongs to $P$ is equivalent to whether $1$ belongs to $P$. Similarly for the other case. We provide a summary of all results in this case here.
\begin{lemma}
	If $x_1\in \left\{\tau_{k-1},\tau_{k+1}\right\}$ and $x_2\notin \left\{\tau_{k-2},\tau_{k-1},\tau_k,\tau_{k+1},\tau_{k+2}\right\}$, then let $n(\tau_k,1,P)=0$ if $1\in P$ and $n(\tau_k,1,P)=1$ if $1\notin P$. If $x_1\in \left\{\tau_{k-2},\tau_{k+2}\right\}$, then $n(\tau_k,1,P)=1$ if $1\in P$ and $n(\tau_k,1,P)=0$ if $1\notin P$. In both cases, $n(\tau_k,1,P)$ counts the number of edges labeled $\tau_k$ in $P$ in the first tile.
\end{lemma}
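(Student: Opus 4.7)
The argument runs parallel to the previous lemmas, the only novelty being that the first tile $G(1)$ has no predecessor tile to consult, and the analysis is correspondingly simpler. I would proceed in four conceptual steps.

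First, I would identify which non-diagonal edge of $G(1)$ carries the label $\tau_k$. Since the diagonal of $G(1)$ is $x_1$, the non-diagonal edges are labeled by the two remaining arcs of each of the two triangles of $\Gamma$ adjacent to $x_1$. From the standard picture of a neighborhood of $\tau_k$ (Figure \ref{quiver}), such an edge exists on $G(1)$ precisely when $x_1 \in \{\tau_{k\pm 1}, \tau_{k\pm 2}\}$, and its exact location on the tile is read off from Figure \ref{firstnot}. The hypothesis on $x_2$ in case 1 and the geometric position opposite the diagonal in case 2 together guarantee that this edge is a genuine boundary edge of the snake graph, not an edge shared with $G(2)$.

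Second, I would decide whether this $\tau_k$-edge lies in $P_-$, and then translate to membership in $P$. Since $j=1$ is odd, the criterion recalled in Section 4.1 (from \cite{Huang2021AnEF}) says that a boundary edge of $G(1)$ belongs to $P_-$ iff it lies clockwise from the diagonal $x_1$. Inspection of Figure \ref{firstnot} shows that in case 1 the $\tau_k$-edge (drawn blue) lies clockwise and is therefore in $P_-$, while in case 2 it lies counterclockwise and is therefore not in $P_-$. By the bijection recalled just before the lemma, the condition $1 \in P$ (as an index set) is precisely the condition that $G(1)$ is enclosed by the symmetric difference cycle, so since the $\tau_k$-edge is incident only to $G(1)$ we have
\begin{equation*}
(\text{edge in } P) \ \Leftrightarrow \ (\text{edge in } P_-) \oplus (1 \in P).
\end{equation*}
Specializing this equivalence to the two cases yields exactly the claimed values of $n(\tau_k, 1, P)$.

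The main obstacle is to rule out a potential overlap with the earlier ``two-tile'' lemma handling consecutive indices with $\{x_j, x_{j+1}\} = \{\tau_{k\pm 1}, \tau_{k\pm 2}\}$. When $x_1 = \tau_{k\pm 2}$ and $x_2 = \tau_{k\pm 1}$, the $\tau_k$-edge is shared between $G(1)$ and $G(2)$, hence internal; that configuration is already covered by the earlier lemma and must be understood as excluded from the present one. Once this matching of conventions is confirmed, the rest of the argument reduces to straightforward bookkeeping of the clockwise/counterclockwise position of the $\tau_k$-edge on $G(1)$, exactly as in the proofs of the preceding lemmas.
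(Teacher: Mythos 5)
Your argument is correct and is essentially the paper's own (the paper only gives the informal explanation before the lemma together with Figure \ref{firstnot}): identify the $\tau_k$-edge of $G(1)$, decide its membership in $P_-$ via the clockwise/counterclockwise criterion for the odd tile, and then flip according to whether $1$ lies in the index set via the symmetric difference. Your extra remark that the configurations with the $\tau_k$-edge shared between $G(1)$ and $G(2)$ are excluded because they fall under the earlier two-tile lemma matches the paper's convention that these lemmas apply only when no previous case does.
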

Dually, for the last tile
\begin{lemma}
	If $x_s\in \left\{\tau_{k-1},\tau_{k+1}\right\}$ and $x_{s-1}\notin \left\{\tau_{k-2},\tau_{k-1},\tau_k,\tau_{k+1},\tau_{k+2}\right\}$, then $n(\tau_k,s,P)=1$ if $s\notin P$ and $n(\tau_k,s,P)=0$ if $s\in P$. If $x_s\in \left\{\tau_{k-2},\tau_{k+2}\right\}$, then $n(\tau_k,s,P)=0$ if $s\notin P$ and $n(\tau_k,s,P)=1$ if $s\in P$. In both cases, $n(\tau_k,s,P)$ counts the number of edges labeled $\tau_k$ in $P$ in the last tile.
\end{lemma}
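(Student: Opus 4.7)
The plan is to dualize the proof of the preceding lemma for the first tile, with the role of index $1$ replaced by index $s$. First I would identify which of the four outer edges of the last tile $G(s)$ could possibly carry the label $\tau_k$. Since $G(s)$ has diagonal $x_s$, an outer edge is labeled $\tau_k$ only when $\tau_k$ and $x_s$ share a triangle in $\Gamma$; inspecting the neighborhood of $\tau_k$ in Figure~\ref{quiver}, this forces $x_s\in\{\tau_{k-2},\tau_{k-1},\tau_k,\tau_{k+1},\tau_{k+2}\}$. The standing hypotheses of the two cases in the lemma together with the assumption $x_{s-1}\notin\{\tau_{k-2},\tau_{k-1},\tau_k,\tau_{k+1},\tau_{k+2}\}$ (automatic in the second case since $\tau_{k\pm2}$ is not in a triangle with any of these) guarantee that the edge of $G(s)$ shared with $G(s-1)$ is \emph{not} labeled $\tau_k$, so exactly one outer edge $e$ of $G(s)$ bears the label $\tau_k$.

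Next I would determine whether $e$ lies in the minimal matching $P_{-}$. By the criterion in the remark following the definition of $P_{\pm}$, a boundary-like edge of the tile $G(p_s)$ belongs to $P_{-}$ if and only if it lies clockwise (for $s$ odd) or counterclockwise (for $s$ even) from the diagonal $x_s$. A short case analysis, using Figure~\ref{quiver} to place $\tau_k$ relative to $x_s$ in each of the four subcases $x_s\in\{\tau_{k-1},\tau_{k+1},\tau_{k-2},\tau_{k+2}\}$ and both parities of $s$, pins down the position of $e$ and yields: in the case $x_s\in\{\tau_{k-1},\tau_{k+1}\}$ the edge $e$ belongs to $P_{-}$, while in the case $x_s\in\{\tau_{k-2},\tau_{k+2}\}$ it does not.

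Finally I would apply the symmetric difference characterization identifying $P$ with a subset of the tile index set. The membership of the outer edge $e$ in $P$ is flipped from $P_{-}$ precisely when the unique tile incident to $e$ (namely $G(s)$) belongs to the enclosed region, i.e.\ precisely when $s\in P$. Thus $e\in P$ iff either $e\in P_{-}$ and $s\notin P$, or $e\notin P_{-}$ and $s\in P$. Substituting the result of the previous step gives: when $x_s\in\{\tau_{k-1},\tau_{k+1}\}$, $n(\tau_k,s,P)=1$ iff $s\notin P$; when $x_s\in\{\tau_{k-2},\tau_{k+2}\}$, $n(\tau_k,s,P)=1$ iff $s\in P$, which is exactly the stated formula.

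The main obstacle is purely bookkeeping, namely correctly tracking the parity of $s$ together with the orientation (``top or right'') of $G(s)$ relative to $G(s-1)$ in order to locate $e$ and decide its membership in $P_{-}$. Once these geometric pictures are laid out, the proof concludes by the symmetric difference principle in exactly the same manner as the companion statement for the first tile.
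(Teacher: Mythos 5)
Your proposal is correct and follows exactly the strategy the paper itself uses for this lemma (the paper gives no separate argument beyond ``the strategy is totally the same as above,'' referring to Figure~\ref{firstnot}): locate the unique edge of the last tile labeled $\tau_k$, decide its membership in $P_-$ by the clockwise/counterclockwise criterion, and convert to membership in $P$ via the symmetric-difference rule, which flips precisely when $s\in P$. One small inaccuracy: your parenthetical claim that the hypothesis on $x_{s-1}$ is ``automatic'' in the case $x_s\in\{\tau_{k-2},\tau_{k+2}\}$ is not literally true (e.g.\ $x_{s-1}=\tau_{k-1}$, $x_s=\tau_{k-2}$ is possible, and then the $\tau_k$-edge of $G(s)$ is the glued interior edge); that configuration is instead excluded because it falls under the earlier lemma for $\{x_j,x_{j+1}\}=\{\tau_{k-1},\tau_{k-2}\}$, the cases being the disjoint equivalence classes described in the subsequent remark.
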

\begin{figure}[htbp]
	\centering
	\includegraphics[scale=0.1]{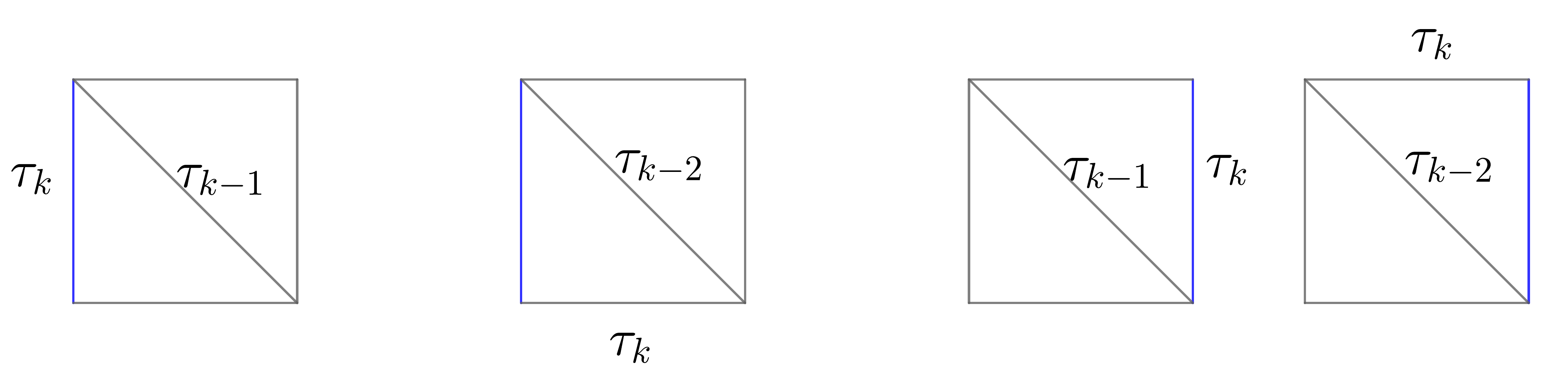}
	\caption{$s$ is odd}
	\label{firstnot}
\end{figure}

In the last case, we need to consider the first or last two tiles and count the number of edges labeled $\tau_k$ in these tiles. We show some examples in Figure \ref{firstis}. For example, if the first tile has diagonal $\tau_{k}$ and the second tile has diagonal $\tau_{k-2}$, then the blue edge belongs to $P_-$ and the red edge cannot belong to $P_-$. So whether the red edge belongs to $P$ is equivalent to whether $2$ belongs to $P$. Similarly for the other case. We provide a summary of all results in this case here.
\begin{lemma}
	If $x_1=\tau_k$ and $x_2\in \left\{\tau_{k-2},\tau_{k+2}\right\}$, then $n^+(\tau_k,1,P)=1$ if $2\in P$ and $n^+(\tau_k,1,P)=0$ if $2\notin P$. If $x_2\in \left\{\tau_{k-1},\tau_{k+1}\right\}$, then $n^+(\tau_k,1,P)=1$ if $2\notin P$ and $n^+(\tau_k,1,P)=0$ if $2\in P$. In both cases, $n(\tau_k,1,P)$ counts the number of edges labeled $\tau_k$ in $P$ in the first two tiles.
\end{lemma}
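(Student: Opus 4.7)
The plan is to proceed in complete analogy with the earlier case-by-case lemmas in this section. Since $x_1 = \tau_k$ is the diagonal of $G(1)$, the tile $G(1)$ carries no outer edge labeled $\tau_k$, and so the count reduces to examining the unique outer edge of $G(2)$ that bears the label $\tau_k$ in each sub-case. Moreover this edge lies on the outer boundary of the snake graph, so its only adjacent tile is $G(2)$ itself, which is what makes the answer depend only on whether $2 \in P$.

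First, I would draw the two-tile snake graph $G(1) \cup G(2)$ in each sub-case, reading off the data of the triangles at $\tau_k$ from Figure \ref{quiver} (the analogue of Figure \ref{firstis} for this configuration). In the sub-case $x_2 \in \{\tau_{k-2},\tau_{k+2}\}$, the gluing edge between $G(1)$ and $G(2)$ is the third edge of the relevant triangle through $\tau_k$, and the $\tau_k$-edge of $G(2)$ sits on the side of the diagonal of $G(2)$ facing $G(1)$; in the sub-case $x_2 \in \{\tau_{k-1},\tau_{k+1}\}$, the corresponding orientation check places the $\tau_k$-edge on the opposite side.

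Next, I would determine whether this $\tau_k$-edge belongs to the minimal matching $P_-$. Because $j = 2$ is even, the criterion recorded in the remark after the definition of $P_\pm$ places the edges of $P_-$ in the counterclockwise direction from the diagonal of $G(2)$. A direct check on the two pictures shows: for $x_2 \in \{\tau_{k-2},\tau_{k+2}\}$ the $\tau_k$-edge is the ``red'' edge not in $P_-$, while for $x_2 \in \{\tau_{k-1},\tau_{k+1}\}$ it is the ``blue'' edge lying in $P_-$.

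Finally, I would apply the symmetric-difference characterisation of $P$. The set $(P_- \cup P) \setminus (P_- \cap P)$ is the boundary of the enclosed region in $G(w)$, and the edge in question is an outer edge of the snake graph whose unique adjacent tile is $G(2)$. Hence this edge lies on the boundary of the enclosed region if and only if $2 \in P$. Combining with the ``red/blue'' classification: a ``red'' edge (not in $P_-$) belongs to $P$ iff $2 \in P$, giving $n^+(\tau_k,1,P)=1$ exactly when $2 \in P$; a ``blue'' edge (in $P_-$) belongs to $P$ iff $2 \notin P$, giving $n^+(\tau_k,1,P)=1$ exactly when $2 \notin P$. This is exactly the stated formula. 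The only real obstacle is diagrammatic bookkeeping, namely matching the clockwise/counterclockwise determinations at $G(2)$ to the global orientation convention fixed in Figure \ref{quiver}; no new ideas beyond the symmetric-difference principle already used in the preceding lemmas are required.
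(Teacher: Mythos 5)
Your proposal is correct and follows essentially the same route as the paper: the paper's own justification (the discussion around Figure \ref{firstis}) likewise draws the first two tiles, observes that in the sub-case $x_2\in\{\tau_{k-2},\tau_{k+2}\}$ the relevant $\tau_k$-edge is the red edge not in $P_-$ while in the other sub-case it is the blue edge in $P_-$, and then applies the symmetric-difference criterion to conclude membership in $P$ is governed by whether $2\in P$. Your parity check via the minimal-matching criterion at the even tile $G(2)$ and the boundary-edge observation are just slightly more explicit versions of the same bookkeeping.
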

Dually, for the last two tile 
\begin{lemma}
	If $x_s=\tau_k$ and $x_{s-1}\in \left\{\tau_{k-2},\tau_{k+2}\right\}$, then $n^-(\tau_k,s,P)=1$ if $s-1\in P$ and $n^-(\tau_k,s,P)=0$ if $s-1\notin P$. If $x_{s-1}\in \left\{\tau_{k-1},\tau_{k+1}\right\}$, then $n^-(\tau_k,s,P)=0$ if $s-1\in P$ and $n^-(\tau_k,s,P)=1$ if $s-1\notin P$. In both cases, $n(\tau_k,s,P)$ counts the number of edges labeled $\tau_k$ in $P$ in the last two tiles.
\end{lemma}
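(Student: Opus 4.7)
The plan is to mirror, verbatim modulo a reversal of orientation, the argument given in the immediately preceding lemma for the first two tiles $G(1),G(2)$. Reversing the string $w\mapsto w^{-1}$ swaps first and last while also swapping the roles of $P_-$ and $P_+$; since the present statement is phrased in terms of $P_-$ rather than $P_+$, I would not appeal to the reversal symmetry abstractly, but instead redo the picture analysis directly at the tail of the snake graph.

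First I would split into the two configurations of $x_{s-1}$ relative to $x_s=\tau_k$, using the neighbourhood picture in Figure \ref{quiver} to read off which edge of $G(s-1)\cup G(s)$ carries the label $\tau_k$. In the case $x_{s-1}\in\{\tau_{k-2},\tau_{k+2}\}$, exactly one edge of $G(s)$ incident to its diagonal $x_s$ is labeled $\tau_k$. Applying the standard criterion for $P_\pm$ via clockwise versus counterclockwise position with respect to the diagonal, this edge belongs to $P_+$ rather than $P_-$. By the symmetric-difference description of $P$, such an edge lies in $P$ if and only if exactly one of the tiles meeting it is enclosed by $P\bigtriangleup P_-$; since $G(s)$ is terminal (there is no tile beyond it to form a larger cycle), this condition collapses to $s-1\in P$, yielding the first formula.

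In the second configuration $x_{s-1}\in\{\tau_{k-1},\tau_{k+1}\}$, the very same local analysis instead places the $\tau_k$-labeled edge into $P_-$. Its membership in $P$ is then toggled by the symmetric difference, giving the complementary condition $s-1\notin P$ precisely as stated. Once both cases have been checked, $n^-(\tau_k,s,P)$ counts the number of edges labeled $\tau_k$ in $P$ within the last two tiles, which is the content of the lemma.

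The main obstacle will be the bookkeeping of the parity of $s$ together with the sign-choices $\tau_{k\pm 2}$ versus $\tau_{k\pm 1}$, each of which flips the relevant clockwise/counterclockwise convention. However, as in the preceding lemmas of this section, the cases are mirror images of one another and produce the same Boolean conditions; it therefore suffices to draw one representative picture (analogous to Figure \ref{firstis}) and let the remaining cases follow by symmetry.
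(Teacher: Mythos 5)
Your overall strategy is the same as the paper's (a direct case analysis at the tail of the snake graph, reading off membership in $P_-$ from the local picture and then converting to a condition on enclosed tiles via the symmetric difference), but the key geometric identification in your argument is wrong. No boundary edge of the last tile $G(s)$ is labeled $\tau_k$: the diagonal of $G(s)$ is $x_s=\tau_k$, and its four boundary edges carry the labels of the neighbouring arcs $\tau_{k\pm1},\tau_{k\pm2}$ from Figure \ref{quiver}. The edge labeled $\tau_k$ that the lemma is about lies on the \emph{previous} tile $G(s-1)$, whose diagonal $x_{s-1}$ is one of those neighbours; it is incident to that diagonal and to the gluing edge between $G(s-1)$ and $G(s)$. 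This is also why the relevant quantity is $n^-$ (which, in the spirit of Definition \ref{d1}, counts $\tau_k$-edges of $P$ in the tiles preceding $G(s)$) and why the answer is governed by $s-1$.

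Because of this misidentification, your step ``since $G(s)$ is terminal, the condition collapses to $s-1\in P$'' does not hold up: a boundary edge of the snake graph belongs to exactly one tile, and it lies in the symmetric difference $P\bigtriangleup P_-$ if and only if that single tile is enclosed — terminality of $G(s)$ plays no role. Had the edge really been a boundary edge of $G(s)$, your argument would give the condition $s\in P$, contradicting the statement. Moreover, your determination of whether the edge lies in $P_-$ or $P_+$ is made with respect to the diagonal $x_s$ of $G(s)$, whereas it must be made with respect to the diagonal $x_{s-1}$ of $G(s-1)$ (taking the parity of $s-1$ into account, exactly as in the paper's Figure \ref{firstis} analysis of the dual first-two-tiles case). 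Once the edge is correctly located on $G(s-1)$, the two cases $x_{s-1}\in\{\tau_{k\pm2}\}$ (edge not in $P_-$, hence in $P$ iff $s-1\in P$) and $x_{s-1}\in\{\tau_{k\pm1}\}$ (edge in $P_-$, hence in $P$ iff $s-1\notin P$) do give the stated formulas, so the repair is local but necessary.
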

\begin{figure}[htbp]
	\centering
	\includegraphics[scale=0.1]{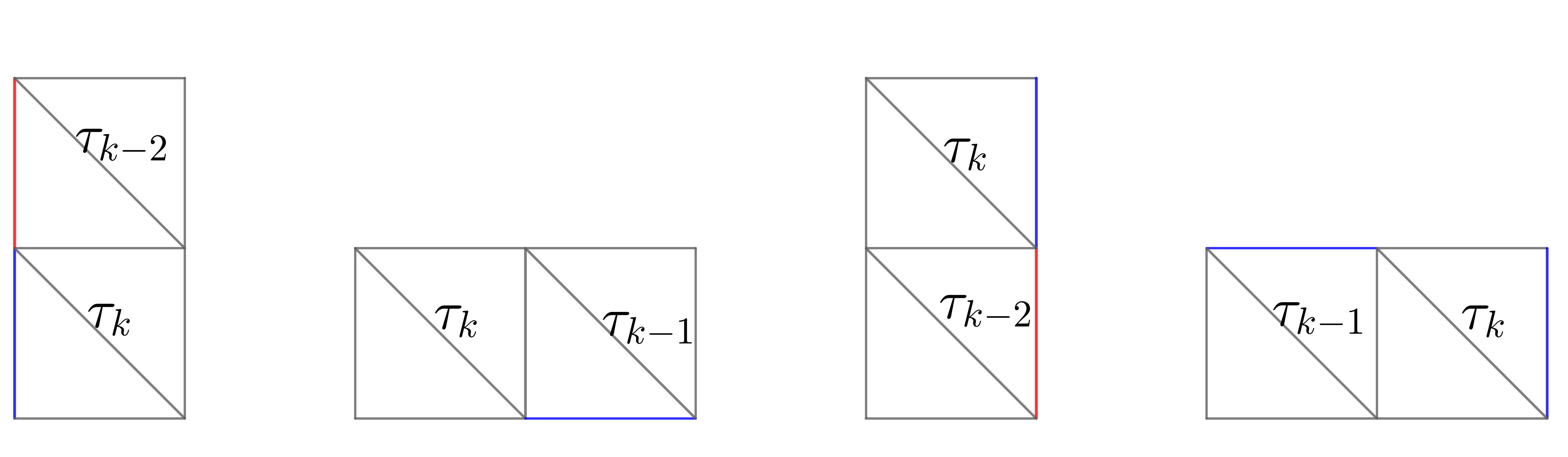}
	\caption{$s$ is odd}
	\label{firstis}
\end{figure}
\begin{remark}
	\begin{enumerate}
		\item For the cases where $n(\tau_k,j,P)$ is not present, let $n(\tau_k,j,P)=n^+(\tau_k,j,P)+n^-(\tau_k,j,P)$.
		\item In \cite{Huang2021AnEF}, the equivalence class of edges labeled $\tau_k$ must be one of the above cases. So $n(\tau_k,j,P)$ is the number of edges belonging to $P$ in the equivalence class near $x_j$.
	\end{enumerate}
\end{remark}
\begin{definition}
	Here we define numbers identical to those in Definition \ref*{d1}.
	\begin{center}
		$M^+(\tau_k,j,w)$=\#$\left\{x_i|x_i=\tau_k, i>j\right\}$ \\
		$M^-(\tau_k,j,w)$=\#$\left\{x_i|x_i=\tau_k, i<j\right\}$ \\
		$N^+(\tau_k,j,P)=n^+(\tau_k,j,P)+\sum_{i>j}n(\tau_k,i,P)$ \\
		$N^-(\tau_k,j,P)=n^-(\tau_k,j,P)+\sum_{i<j}n(\tau_k,i,P)$
	\end{center}
	Then $M^{\pm}(\tau_k,j,w)=m^{\pm}_j(\tau_k,\gamma)$, $N^{\pm}(\tau_k,j,P)=n^{\pm}_j(\tau_k,P)$.
\end{definition}
\begin{definition}
	If $j\in P$, let
	\begin{equation*}
		\Omega'(x_j,P)=[N^+(x_j,j,P)-M^+(x_j,j,w)-N^-(x_j,j,P)+M^-(x_j,j,w)]
	\end{equation*}
	Otherwise, define
	\begin{equation*}
		\Omega'(x_j,P)=-[N^+(x_j,j,P)-M^+(x_j,j,w)-N^-(x_j,j,P)+M^-(x_j,j,w)]
	\end{equation*}
\end{definition}
\begin{definition}
	Let $\gamma$ be an arc in $(S,M)$ with string $w$. Let $v_\gamma:CS(M(w))\rightarrow \mathbb{Z}$ be a map such that
	\begin{enumerate}
		\item $v_\gamma(0)=0$.
		\item If two submodules $N$ and $N'$ satisfy $N=N'\cup {x_j}$ for an index $j$, then
		\begin{equation*}
			v_\gamma(N)-v_\gamma(N')=\Omega'(x_j,N)
		\end{equation*}
	\end{enumerate}
\end{definition}
\begin{theorem}
	The quantum cluster variable associated with $M(w)$ can be expressed by
	\begin{equation*}
		X_{M(w)}=\sum_{N\in CS(M(w))}q^{\frac{v_\gamma(N)}{2}}X^\Gamma(N)
	\end{equation*}
\end{theorem}
\begin{proof}
	The arcs in the surface or their string modules are in bijection with the quantum cluster variables. Note that in Definition \ref{omega}, the edges labeled $a_{2_s}$ and $a_{4_s}$ belonging to $P$ are equivalent to $p_s\in P$. So $\Omega'(x_j,P)=\Omega(j,P)$ and the map $v_\gamma$ has the same initial condition and recursion formula. Therefore, the map $v_\gamma$ is equal to the map $v$ in Theorem \ref{mapv}. For any $P$, we also have $X(P)=X^\Gamma(P)$. Then the formula is true by the bijection between perfect matchings and canonical submodules.
\end{proof}

\section{THE SKEIN ALGEBRA}In this section, we recall some facts about skein algebras from \cite{mandel2023braceletsbasesthetabases}, \cite{Muller2016}.
\par Let $(S,M)$ be a marked surface. A multicurve is an immersion $\phi: C\rightarrow S$ of a compact unoriented 1-manifold $C$ such that the boundary of $C$ maps to $M$, but no interior points of $C$ map to $\partial S$ or $M$. A curve is a connected multicurve. A strand in a multicurve $\phi: C\rightarrow S$ near a point $p$ is a component of the restriction of $\phi$ to an arbitrarily small disc around $p$. For any given arc, we choose two strands which contain the endpoints and do not intersect in $S\backslash M$, and we call these strands the ends of the arc. All multicurves are considered up to homotopy. So we can choose all the self-intersections to be transverse and all interior crossings are between only two strands.
\par A link is a transverse multicurve together with a choice of ordering of the strands at each interior crossing, which means a choice of which strand is \enquote{over} and which is \enquote{under}. Let $K_q=K[q^{\pm \frac{1}{2}}]$. Let $K_q^{Links}(S)$ denote the free $K_q$-module with basis given by the homotopy classes $[C]$ of links $C$ in $S$.
Then the skein module $SK_q(S)$ is defined to be $K_q^{Links}(S)/R$, where $R$ denotes the module of relations in $K_q^{Links}(S)$ generated by the following ``skein'' relation:
\begin{itemize}
	\item Contractible arcs are equivalent to 0;
	\item A contractible loop is equivalent to $-(q^2+q^{-2})\cdot \emptyset$, where $\emptyset$ denotes the empty link;
	\item The Kauffman skein relation.
\end{itemize}
The skein module $SK_q(S)$ is an associative algebra using the superposition product: if $X$ and $Y$ are links such that $X\cup Y$ has transverse crossings, then $X\cdot Y$ is equal to $q^{\frac{k}{2}}$ times the link $X\cup Y$ in which strands of $X$ always cross over strands of $Y$ at each crossing. The integer $k$ is defined as follows: for each arc $i$, let $\partial_1(i)$ and $\partial_2(i)$ denote the two ends of $i$ (for arbitrary numbering). Then given two arcs $i$, $j$, define 
\begin{align*}
	\Lambda(i,j)=\sum_{a,b\in \left\{1,2\right\}}\left\{
	\begin{aligned}
		0& \ \ \mbox{if $\partial_a(i)$  and  $\partial_b(j)$ have different endpoints}    \\
		1& \ \ \mbox{if  $\partial_a(i)$ is clockwise of  $\partial_b(j)$}          \\
		-1& \ \ \mbox{if  $\partial_a(i)$ is counterclockwise of  $\partial_b(j)$} 
	\end{aligned}
	\right.
\end{align*}
Then $k=\sum \Lambda(i,j)$, where the sum is over all pairs of arcs $i\in X$ and $j\in Y$.
\par This algebra is called the (\textbf{Kauffman}) \textbf{skein algebra} of $S$.
\begin{lemma}[\cite{mandel2023braceletsbasesthetabases}]
	Let $(S,M)$ be an unpunctured surface and $\Gamma$ be a triangulation. Then there is a corresponding skew-symmetric matrix $\Lambda$ satisfying $\Lambda B(\Gamma)=-I$. We have an inclusion
	\begin{equation*}
		\mathcal{A}_q(S,M) \subset SK_q(S)
	\end{equation*}
	Furthermore, this inclusion identifies simple arcs with cluster variables, boundary arcs being identified with the frozen variables. Triangulations correspond with clusters and mutations of seeds correspond to flips of triangulations. If each component of $S$ contains at least two markings, then the inclusion is actually an isomorphism.
\end{lemma}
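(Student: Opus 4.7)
The plan is to follow the strategy of \cite{Muller2016} and \cite{mandel2023braceletsbasesthetabases}, proving the statement in three stages: constructing the compatible pair from the triangulation, establishing the inclusion by matching mutations with skein resolutions of flips, and finally addressing surjectivity under the marking hypothesis.

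First I would construct the skew-symmetric matrix $\Lambda$ by taking $\Lambda(\tau_i,\tau_j)$ to be the pairing defined in the skein-algebra subsection applied to the arcs of $\Gamma$. The compatibility identity $\Lambda B(\Gamma)=-I$ would then be verified locally at each internal arc $\tau_k$. The $k$-th column of $B(\Gamma)$ records the signed count of arrows between $\tau_k$ and its neighbors in $Q(\Gamma)$; the $i$-th row of $\Lambda$ records the sum over endpoints of whether ends of $\tau_i$ lie clockwise or counterclockwise of ends of neighboring arcs at shared marked points. A case analysis over the (at most two) triangles adjacent to $\tau_k$ shows that all contributions from neighbors other than $\tau_k$ cancel in pairs, and the contribution from $\tau_k$ itself collapses to $-\delta_{ik}$; this establishes $(B(\Gamma),\Lambda)$ is compatible.

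Next I would define a $\mathbb{Z}[q^{\pm 1/2}]$-algebra homomorphism $\phi:\mathcal{T}\to SK_q(S)$ from the quantum torus of the initial seed by $X^{e_i}\mapsto \tau_i$, extended multiplicatively. The quantum torus relation $X^gX^h=q^{\Lambda(g,h)/2}X^{g+h}$ matches the superposition product in $SK_q(S)$ by construction of $\Lambda$, so $\phi$ is well defined. The crucial step is to show that for each $1\leq k\leq n$, the two terms appearing in the exchange relation for $X'_k$ in the seed mutation $\mu_k$ correspond, under $\phi$, to the two resolutions produced by applying the Kauffman skein relation to the crossing between $\tau_k$ and its flip $\tau_k^\ast$. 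Because flips of triangulations realize mutations of $Q(\Gamma)$ at the combinatorial level, an induction on mutation-sequence length then shows that every quantum cluster variable $X_\gamma$ is sent to the simple arc $\gamma$, and every quantum cluster monomial to the corresponding link in which the strands cross transversally. In particular $\phi$ restricts to an injective homomorphism $\mathcal{A}_q(S,M)\hookrightarrow SK_q(S)$, and the claimed identifications of arcs/cluster variables, triangulations/clusters, flips/mutations follow at once.

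For the isomorphism statement under the hypothesis that every boundary component contains at least two marked points, I would invoke the fact that any link in $S$ can, by successive applications of the Kauffman skein relation and contractible-arc/loop relations, be rewritten as a $K_q$-linear combination of products of simple arcs; the two-markings hypothesis ensures that no arc becomes forced into a monogon or digon that cannot be resolved into simple arcs. Since every simple arc already lies in the image of $\phi$ by the previous paragraph, surjectivity follows. The main obstacle will be the geometric identification of the Kauffman skein resolution of a flipped arc with the quantum exchange relation, which requires careful bookkeeping of the over/under choices at all crossings and of the $q$-powers produced by the superposition product, so that they match precisely the factors $q^{\Lambda(e_i,-e_k+\sum_l[b_{lk}]_+e_l)/2}$ appearing in the mutation formula; the reduction to simple arcs for surjectivity is likely to fail without the two-markings hypothesis, so this hypothesis must enter exactly at that step.
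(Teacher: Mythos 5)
This statement is quoted in the paper as a known result of \cite{Muller2016} and \cite{mandel2023braceletsbasesthetabases}; the paper gives no proof, so your sketch has to be measured against the argument in those references, and as it stands it has two genuine gaps. First, the map you propose goes the wrong way: a $\mathbb{Z}[q^{\pm 1/2}]$-algebra homomorphism $\phi:\mathcal{T}\to SK_q(S)$ with $X^{e_i}\mapsto \tau_i$ cannot exist, because the quantum torus is generated by invertible elements $X^{g}$, $g\in L$, while arcs are not invertible in the skein algebra. The actual argument runs in the opposite direction: one localizes $SK_q(S)$ at the (boundary and triangulation) arcs, shows this Ore localization is the quantum torus $\mathcal{T}_\Gamma$ attached to the seed of $\Gamma$, and then proves by induction on flips, using the Kauffman/Ptolemy resolution of the single crossing between $\tau_k$ and its flip, that every quantum cluster variable coincides with the class of a simple arc inside this common ambient algebra. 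Your ``crucial step'' is the right computation, but without repairing the direction of the embedding the inclusion $\mathcal{A}_q(S,M)\subset SK_q(S)$ is not actually constructed. (Also, quantum cluster monomials correspond to links of pairwise non-crossing arcs, not to arbitrary transversally crossing products.)

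Second, the surjectivity argument is not correct as stated: resolving crossings via the skein and contractibility relations reduces any link to a $K_q$-combination of products of \emph{simple arcs and simple closed curves}, and the closed curves (e.g.\ the core of an annulus) cannot be eliminated by further skein moves; showing that they nonetheless lie in the cluster algebra is the hard part, and it is precisely here, not in resolving monogons or digons, that the hypothesis of at least two markings per component enters. In \cite{Muller2016} the route is the sandwich $\mathcal{A}_q\subseteq SK_q(S)[\partial^{-1}]\subseteq \mathcal{U}_q$, where the second inclusion is obtained by expanding links Laurent-wise with respect to every triangulation, together with the equality $\mathcal{A}_q=\mathcal{U}_q$ proved via local acyclicity, which is where the marking hypothesis is used; \cite{mandel2023braceletsbasesthetabases} instead identifies closed curves with bracelet/theta elements of the cluster algebra. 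Your outline would need to be rebuilt around one of these mechanisms; the compatibility computation $\Lambda B(\Gamma)=-I$ in your first step is fine as a local check around the two triangles adjacent to $\tau_k$ and agrees with the references.
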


\section{A MULTIPLICATION FORMULA WITH ONE DIMENSIONAL EXTENSION SPACE}
In this section, we provide a multiplication formula between two string modules with one-dimensional extension space. We explicitly give all the strings of these modules. The extension space was studied in \cite{Canakcı_Pauksztello_Schroll_2021}. 
\begin{definition}[\cite{Canakcı_Pauksztello_Schroll_2021}]
	Let $v$ and $w$ be strings.
	\begin{enumerate}
		\item (Arrow extension) If there exists $a\in Q_1$ such that $u=wa^{-1}v$ is a string, then there is a non-split short exact sequence
		\begin{equation*}
			0\rightarrow M(w)\rightarrow M(u)\rightarrow M(v)\rightarrow 0
		\end{equation*}
		\item (Overlap extension) Suppose that $v=v_Lbma^{-1}v_R$ and $w=w_Ld^{-1}mcw_R$ with $a,b,c,d\in Q_1$ and $m,v_L,v_R,w_L,w_R$ (possibly trivial) strings such that
		\begin{enumerate}
			\item[(i)] if $a=\emptyset$, then $c\neq \emptyset$;
			\item[(ii)] if $b=\emptyset$, then $d\neq \emptyset$;
			\item[(iii)] if $m$ is a trivial string, then $ac\in I$ and $bd\in I$ (whenever they exist, subject to the constraints above).
		\end{enumerate}
		Then there exists a non-split short exact sequence
		\begin{equation*}
			0\rightarrow M(w)\rightarrow M(u)\oplus M(u')\rightarrow M(v)\rightarrow 0
		\end{equation*}
		where $u=v_Lbmcw_R$ and $u'=w_Ld^{-1}ma^{-1}v_R$.
	\end{enumerate}
\end{definition}
\begin{theorem}[\cite{Canakcı_Pauksztello_Schroll_2021}]
	Let $A$ be a gentle algebra, and $v$ and $w$ be strings. The collection of arrow and overlap extensions between $M(w)$ and $M(v)$ form a basis for $Ext^1_A(M(v),M(w))$.
\end{theorem}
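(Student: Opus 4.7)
The plan is to reduce the claim to a dimension count plus an explicit identification, using the Auslander--Reiten formula
\[
\mathrm{Ext}^1_A(M(v),M(w)) \;\cong\; D\,\overline{\mathrm{Hom}}_A(M(w),\tau M(v)),
\]
combined with the Butler--Ringel description of $\tau$ on string modules and Crawley--Boevey's graph-map basis for Hom spaces between string modules.

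First I would recall the Butler--Ringel hook/cohook construction, which realises $\tau M(v)$ as a direct sum of string modules whose strings are obtained from $v$ by explicit modifications at its two endpoints (there are no band summands since the input is a single string module). Second, I would invoke Crawley--Boevey's theorem that $\mathrm{Hom}(M(w),M(u))$ has a basis of \emph{graph maps}, each specified by a factor substring of $w$ and a sub-substring of $u$ that agree up to direction. Applying this summand by summand to $\tau M(v)$ gives a basis of $\mathrm{Hom}(M(w),\tau M(v))$, and the subspace of maps factoring through an injective is spanned exactly by the graph maps whose image lies entirely in the hook or cohook part introduced when forming $\tau M(v)$; quotienting by these leaves a basis of $\overline{\mathrm{Hom}}(M(w),\tau M(v))$.

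Third, I would set up a bijection between these surviving graph maps and the disjoint union of arrow extensions and overlap extensions. A graph map whose common subword is empty but whose image crosses a newly added hook/cohook arrow $a$ corresponds to the arrow extension with $u = wa^{-1}v$. A graph map whose common subword is a non-trivial string $m$ corresponds to an overlap extension with overlap $m$: the letters $a,b,c,d$ of the overlap definition are precisely the letters of $v$ and $w$ just outside $m$, and the boundary conditions (forcing $c \neq \emptyset$ when $a = \emptyset$, and $ac \in I$, $bd \in I$ when $m$ is trivial) translate exactly into the conditions under which the corresponding graph map is non-zero in the stable quotient. I would then verify that the short exact sequence defining each arrow/overlap extension represents the class dual, under the AR formula, to the matched graph map, by reading the extension off its AR dual in the standard way.

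The main obstacle is the third step: a careful case analysis that tracks which graph maps survive in the stable Hom, matches each survivor with exactly one listed extension, and handles degenerate cases at endpoints and at trivial overlaps without double-counting. Once this combinatorial matching is in place, linear independence of the extensions is inherited from linear independence of graph maps, and spanning is automatic from the dimension equality, which finishes the proof.
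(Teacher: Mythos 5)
This statement is quoted from the cited reference (\c{C}anak\c{c}{\i}--Pauksztello--Schroll); the paper you were given does not prove it, so there is no internal proof to compare against --- it is background imported wholesale. Judged on its own terms, your outline follows a genuinely different route from the cited authors, who work homologically with the combinatorial description of morphisms between string objects (graph maps and singleton maps in the sense of Arnesen--Laking--Pauksztello) rather than through the Auslander--Reiten translate; your plan via $\mathrm{Ext}^1_A(M(v),M(w))\cong D\,\overline{\mathrm{Hom}}_A(M(w),\tau M(v))$, Butler--Ringel hooks/cohooks, and Crawley--Boevey's graph-map basis is a recognized alternative strategy and could in principle be carried out.

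However, as written it is a plan rather than a proof, and the gap sits exactly where the theorem lives. Your third step --- deciding which graph maps survive in the stable quotient, matching each survivor with one arrow or overlap extension, and verifying via the AR pairing that the matched extension is (up to triangularity) dual to that graph map --- is asserted, not argued. In particular: (i) the claim that the maps factoring through injectives are ``exactly the graph maps whose image lies entirely in the hook or cohook part'' is not obviously correct and needs a genuine argument (factorization through an injective is a property of the map, not merely of where its image sits, and the injective envelope of $\tau M(v)$ must be brought in explicitly); (ii) linear independence of the arrow and overlap extensions is not automatically ``inherited'' from linear independence of graph maps --- you must compute the duality pairing between each short exact sequence and each basis element of $\overline{\mathrm{Hom}}(M(w),\tau M(v))$ and show the resulting matrix is invertible (e.g.\ triangular with nonzero diagonal); without this, the bijection only gives equality of cardinalities, not that the extensions span. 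There are also small inaccuracies to repair: $\tau M(v)$ of an indecomposable non-projective string module is a single string module, not a direct sum of several; the case $M(v)$ projective (where $\tau M(v)=0$) and the degenerate endpoint/trivial-overlap cases need separate, explicit treatment rather than being folded into ``a careful case analysis''. Until step three is actually executed, the proposal does not establish the theorem.
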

\begin{definition}[\cite{CANAKCI20171}]
	Given a string $v$,
	\begin{enumerate}
		\item If $v$ is inverse, then $_hv$ is the trivial string corresponding to $e(v)$. Otherwise, $_hv$ is obtained from $v$ by deleting the first direct arrow in $v$ and the inverse string preceding it.
		\item If $v$ is direct, then $_cv$ is the trivial string corresponding to $e(v)$. Otherwise, $_cv$ is obtained from $v$ by deleting the first inverse arrow in $v$ and the direct string preceding it.
		\item If $v$ is direct, then $v_h$ is the trivial string corresponding to $s(v)$. Otherwise, $v_h$ is obtained from $v$ by deleting the last inverse arrow in $v$ and the direct string succeeding it.
		\item If $v$ is inverse, then $v_c$ is the trivial string corresponding to $s(v)$. Otherwise, $v_c$ is obtained from $v$ by deleting the last direct arrow in $v$ and the inverse string succeeding it.
	\end{enumerate}
\end{definition}

\subsection{Arrow extension}
Let $v$ and $w$ have an arrow extension. Figure \ref{arrow} shows the arrow extension on the surface \cite{CANAKCI20171}. 
In particular, $u_1=wa^{-1}v$ is the string of the middle term of the extension. $u_2$ is the trivial string corresponding to some vertex $i_0$. $u_3= {_c}v$ and $u_4=v_h$.
\begin{lemma}
	$vw=q^{\frac{\Lambda(v,w)-\Lambda(u_1,u_2)}{2}+1}[u_1][u_2]+q^{\frac{\Lambda(v,w)-\Lambda(u_3,u_4)}{2}-1}[u_3][u_4]$
\end{lemma}
\begin{proof}
	By definition
	\begin{align*}
		[v][w]&=q^{\frac{\Lambda(v,w)}{2}}[v\cup w] \\
		[u_1][u_2]&=q^{\frac{\Lambda(u_1,u_2)}{2}}[u_1\cup u_2] \\
		[u_3][u_4]&=q^{\frac{\Lambda(u_3,u_4)}{2}}[u_3\cup u_4]
	\end{align*}
	and the skein relation
	\begin{equation*}
		[v\cup w]=q[u_1\cup u_2]+q^{-1}[u_3\cup u_4]
	\end{equation*}
\end{proof}
\begin{figure}[!htbp]
	\centering
	\includegraphics[scale=0.15]{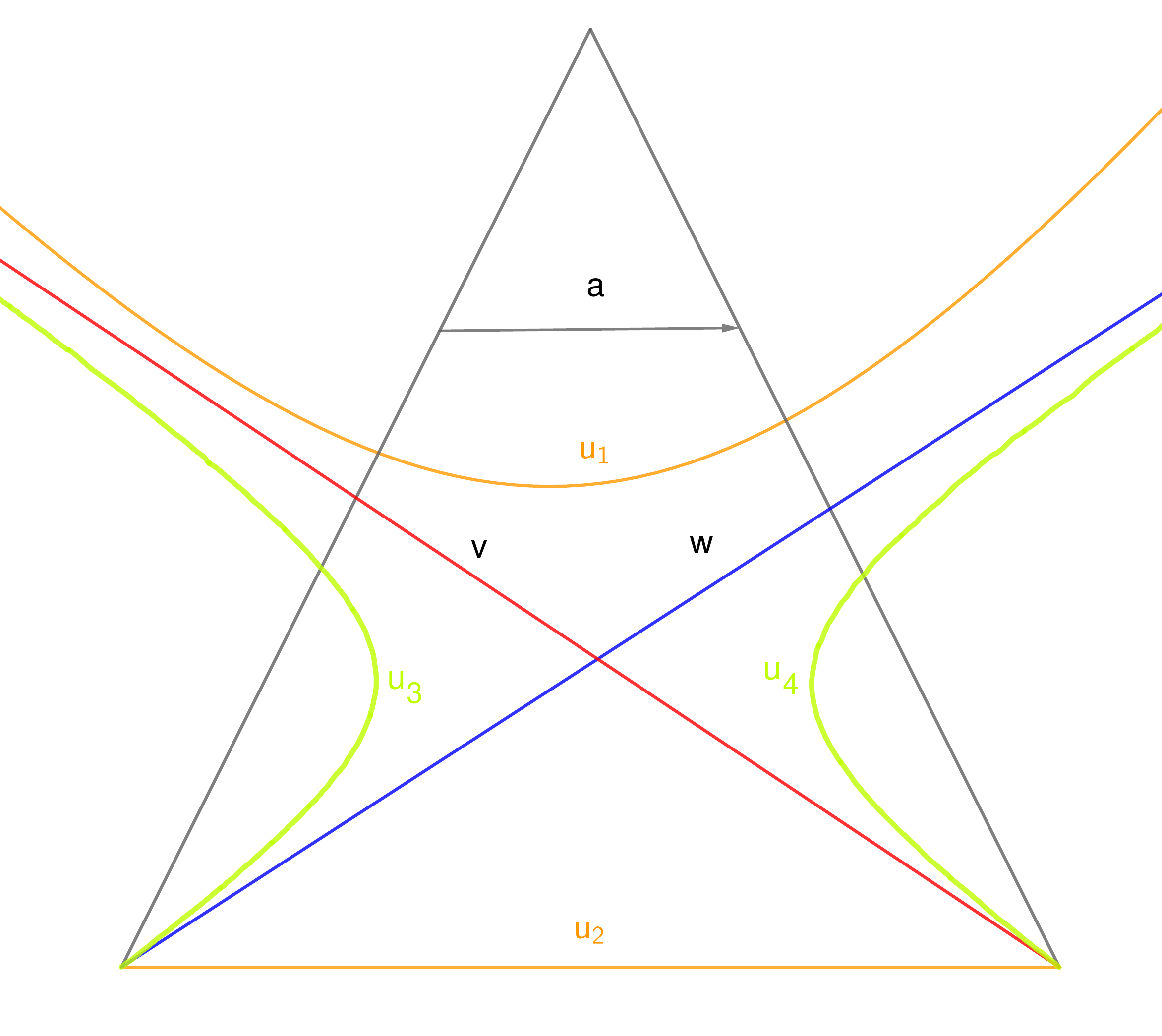}
	\caption{Arrow extension}
	\label{arrow}
\end{figure}
If we identify each arc with the corresponding string module, then $[v]=X_{M(v)}$ where we use the same letter for the arc and string. So we have the following:
\begin{theorem}
	If two strings $v$ and $w$ have only one arrow extension, then we have a multiplication formula
	\begin{equation*}
		X_{M(v)}X_{M(w)}=q^{\frac{\Lambda(v,w)-\Lambda(u_1,u_2)}{2}+1}X_{M(u_1)}X_{M(u_2)}+q^{\frac{\Lambda(v,w)-\Lambda(u_3,u_4)}{2}-1}X_{M(u_3)}X_{M(u_4)}
	\end{equation*}
\end{theorem}
\subsection{Overlap extension}
Let $v$ and $w$ have an overlap extension. Recall that in this case, $v=v_Lbma^{-1}v_R$ and $w=w_Ld^{-1}mcw_R$. Figure \ref{overlap} shows the overlap extension on the surface. From the diagram, we can see $u_1=v_Lbmcw_R$ and $u_2=w_Ld^{-1}ma^{-1}v_R$.
\begin{equation*}
	u_3=\left\{
	\begin{aligned}
		v_Lf^{-1}w_L \ \ \ & if \ b\neq \emptyset,d\neq \emptyset \\
		(w_L)_h \ \ \ & if \ b=\emptyset,d\neq \emptyset \\
		(v_L)_c \ \ \ & if \ b\neq \emptyset,d=\emptyset
	\end{aligned}
	\right.
\end{equation*}
\begin{equation*}
	u_4=\left\{
	\begin{aligned}
		v_Re^{-1}w_R \ \ \ & if \ a\neq \emptyset,c\neq \emptyset \\
		_h(w_R) \ \ \ & if \ a=\emptyset,c\neq \emptyset \\
		_c(v_R) \ \ \ & if \ a\neq \emptyset,c=\emptyset
	\end{aligned}
	\right.
\end{equation*}
\begin{figure}[h]
	\centering
	\includegraphics[scale=0.15]{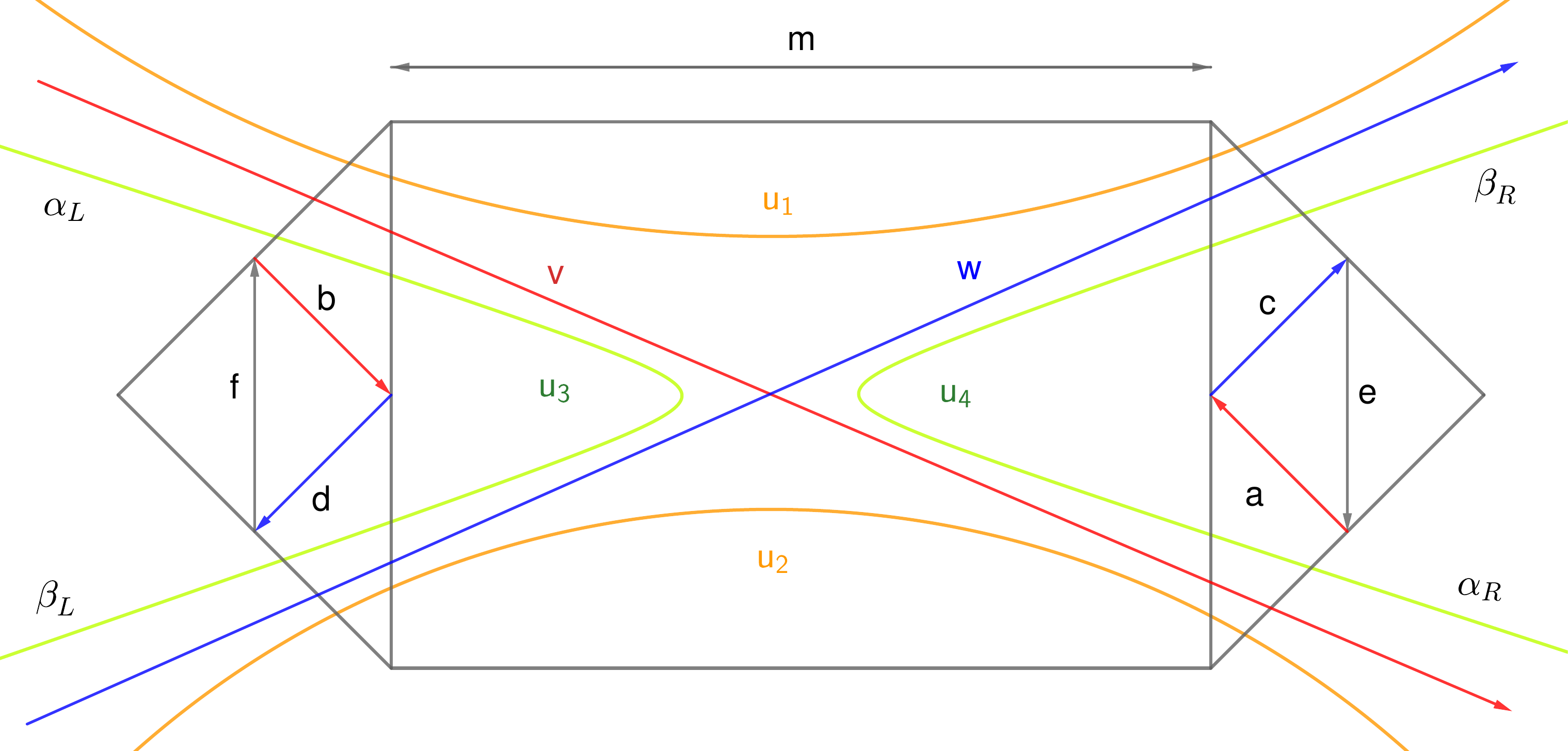}
	\caption{Overlap extension}
	\label{overlap}
\end{figure}
Using the skein relation, we have
\begin{lemma}
	$vw=q^{\frac{\Lambda(v,w)-\Lambda(u_1,u_2)}{2}+1}[u_1][u_2]+q^{\frac{\Lambda(v,w)-\Lambda(u_3,u_4)}{2}-1}[u_3][u_4]$
\end{lemma}
Then if we identify each arc with the corresponding string module, the following is true: 
\begin{theorem}
	If two strings $v$ and $w$ have only one overlap extension, then we have a multiplication formula 
	\begin{equation*}
		X_{M(v)}X_{M(w)}=q^{\frac{\Lambda(v,w)-\Lambda(u_1,u_2)}{2}+1}X_{M(u_1)}X_{M(u_2)}+q^{\frac{\Lambda(v,w)-\Lambda(u_3,u_4)}{2}-1}X_{M(u_3)}X_{M(u_4)}
	\end{equation*}
\end{theorem}

\section{The Kronecker Case}
For the case when the quiver is $1\rightrightarrows 2$, \cite{CANAKCI2020105132} and \cite{Huang2021AnEF} both gave the expansion formula of cluster variables in this quantum cluster algebra. In this section, we show that they are actually the same.
\subsection{Results in \cite{CANAKCI2020105132}}
First, observe that the snake graph corresponding to arcs of this type has alternating face weights (diagonal labels) 1 and 2.
\begin{definition}
	The snake graph $\mathcal{G}_n$ is the following straight snake graph consisting of $2s+1$ tiles with alternating face weights $1$ and $2$, with exactly $s+1$ tiles of weight~$1$ and exactly $s$ tiles of weight~$2$. The snake graph $\mathcal{H}_s$ is obtained from $\mathcal{G}_s$ by removing the last tile (with weight 1).
\end{definition}
\noindent Denote the $2s+1$ tiles of the snake graph $\mathcal{G}_s$ by
\begin{equation*}
	G_{-s},G_{-(s-1)},\dots,G_{s-1},G_s
\end{equation*}
Similarly, for $\mathcal{H}_s$
\begin{equation*}
	H_{-s},H_{-(s-1)},\dots,H_{s-2},H_{s-1}
\end{equation*}
\begin{definition}
	Let $s\geq 0$, define a function on the tiles of the snake graph:
	\begin{enumerate}
		\item $\alpha(G_i)=i$ if $G_i$ has weight 1 and $\alpha(G_i)=-i$ if $G_i$ has weight 2.
		\item $\alpha(H_i)=i+1$ if $H_i$ has weight 1 and $\alpha(H_i)=-i$ if $H_i$ has weight 2.
	\end{enumerate}
\end{definition}
Then for snake graph $\mathcal{G}$ which may be either $\mathcal{G}_s$ or $\mathcal{H}_s$, let $P_-(\mathcal{G})$ be the minimal perfect matching. For every perfect matching $P$, the symmetric difference encloses a set of tiles of $\mathcal{G}$. Let $Twist(P)$ be the set of tiles enclosed. Define a map
\begin{equation*}
	\alpha: \mathcal{P}(\mathcal{G})\rightarrow \mathbb{Z}, P\mapsto \sum_{G\in Twist(P)} \alpha(G)
\end{equation*}
\begin{definition}[The expansion formula]
	For $s\geq 0$, let 
	\begin{equation*}
		r_s=\sum_{P\in \mathcal{P}(\mathcal{G}_s)}q^{\frac{\alpha(P)}{2}}X(P)
	\end{equation*}
\end{definition}
\begin{remark}
	In \cite{CANAKCI2020105132}, they also gave the expansion formula for $\mathcal{H}_s$ 
	\begin{equation*}
		\sum_{P\in \mathcal{P}(\mathcal{H}_s)}q^{\frac{\alpha(P)}{2}}X(P)
	\end{equation*}
	These elements play a key role even though they are not cluster variables.
\end{remark}
\subsection{Equality}
We want to show that the expansion formula above equals the expansion formula in \cite{Huang2021AnEF}, which is 
\begin{equation*}
	\sum_{P\in \mathcal{P}(\mathcal{G}_s)}q^{\frac{v(P)}{2}}X(P)
\end{equation*}
Recall that for $P\in \mathcal{P}(\mathcal{G}_s)$, the symmetric difference actually gives the exponent of $X(P)$, which is the dimension vector $e$ of canonical submodule $M(P)$:
\begin{equation*}
	X(P)=X^{g+B_\Gamma e}
\end{equation*}
where $g$ is the exponent of $X(P_-)$. Therefore, if we can show the following equality is true, then the two expansion formulas are equal.
\begin{equation*}
	\sum_{P:dimP=(u,w)}q^{\frac{\alpha(P)}{2}}=\sum_{P:dimP=(u,w)}q^{\frac{v(P)}{2}} \tag{*}
\end{equation*}
Actually, $u$ and $w$ count the number of tiles with diagonal 1 and 2 in $Twist(P)$. We prove $(*)$ by induction on the number of tiles. The equality is obviously true for $\mathcal{G}_1$ and $\mathcal{H}_1$.
\subsection{In $\mathcal{G}_s$}
Let $P$ be a perfect matching of $\mathcal{G}_s$. If $Twist(P)$ does not contain the last tile, then it can be viewed as a perfect matching of $\mathcal{H}_s$; if $Twist(P)$ contains the last tile, then it can be viewed as a perfect matching of $\mathcal{G}_{s-1}$. The snake graphs and the values of $\alpha$ are shown in Figure \ref{gs}. They are the snake graph of $\mathcal{G}_s,\mathcal{H}_s,\mathcal{G}_{s-1}$ in order, the edges of the minimal perfect matching are shown in blue and the values of $\alpha$ are on the edges.
\begin{figure}[htbp]
	\centering
	\includegraphics[scale=0.1]{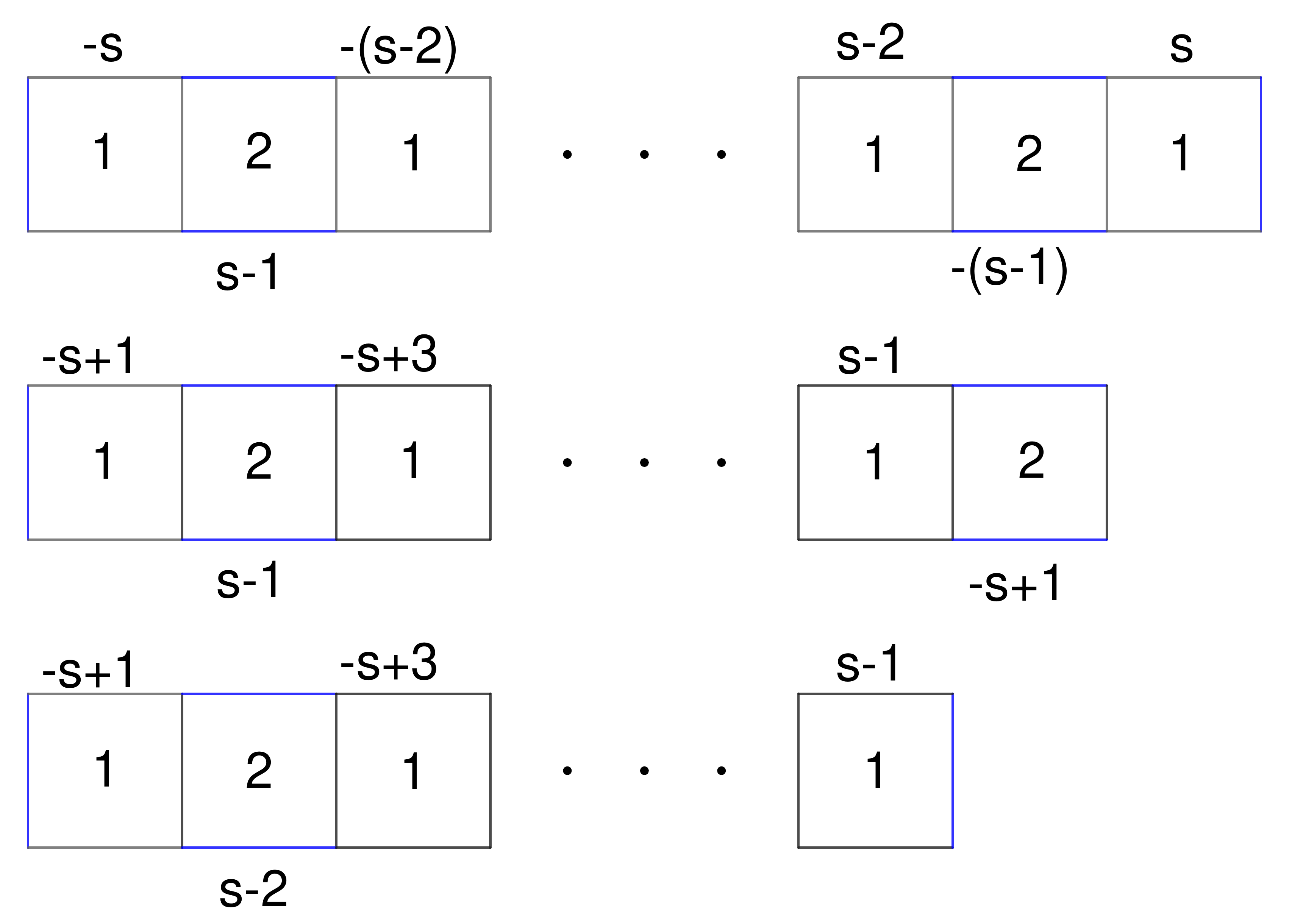}
	\caption{Snake graph and values of $\alpha$}
	\label{gs}
\end{figure}
\begin{lemma}
	Let $P$ be a perfect matching of $\mathcal{G}_s$ with dimension vector $(u,w)$ such that $Twist(P)$ does not contain the last tile. Then $P'$ is a perfect matching of $\mathcal{H}_s$ corresponding to $P$. In addition, $\alpha(P)=\alpha(P')-u$ and $v(P)=v(P')-u$.
\end{lemma}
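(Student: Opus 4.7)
The plan is to establish the two equalities separately, with the $\alpha$ identity being essentially a direct calculation and the $v$ identity requiring induction on the twist distance from the minimal matching.

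For the $\alpha$ identity, I would argue as follows. Under the correspondence $P \leftrightarrow P'$, the enclosed tile sets $Twist(P)$ and $Twist(P')$ are naturally identified: since the last tile $G_s$ of $\mathcal{G}_s$ is not in $Twist(P)$ by hypothesis, both symmetric differences $(P_-(\mathcal{G}_s) \cup P)\setminus (P_-(\mathcal{G}_s)\cap P)$ and $(P_-(\mathcal{H}_s) \cup P')\setminus (P_-(\mathcal{H}_s)\cap P')$ enclose the same collection of tiles (indexed within $\{-s,\ldots,s-1\}$). Comparing $\alpha$ values, on weight-$2$ tiles both graphs assign $-i$, while on weight-$1$ tiles $\alpha(H_i) = i+1$ but $\alpha(G_i) = i$, giving a difference of $1$. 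Summing over $Twist(P)$ yields $\alpha(P') - \alpha(P) = u$, as $u$ counts weight-$1$ tiles in $Twist(P)$.

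For the $v$ identity, I would induct on the number of twists required to reach $P$ from $P_-(\mathcal{G}_s)$. The base case $P = P_-(\mathcal{G}_s)$ gives $P' = P_-(\mathcal{H}_s)$, $u = 0$, and $v(P) = v(P') = 0$ by the initial conditions of Theorem \ref{mapv}. For the inductive step, suppose $P$ is obtained from $\tilde P$ by twisting at a tile $G_j$ with $j \neq s$ (so that the hypothesis $G_s \notin Twist$ is preserved), and $P'$ is obtained from $\tilde P'$ by twisting at the corresponding tile $H_j$. By the recursion relation,
\begin{equation*}
v(P) - v(\tilde P) = \Omega(G_j, P), \qquad v(P') - v(\tilde P') = \Omega(H_j, P').
\end{equation*}
Assuming $v(\tilde P) = v(\tilde P') - u(\tilde P)$, the desired identity $v(P) = v(P') - u(P)$ reduces to showing
\begin{equation*}
\Omega(H_j, P') - \Omega(G_j, P) = u(P) - u(\tilde P).
\end{equation*}

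The key observation to finish the inductive step is that $\mathcal{G}_s$ differs from $\mathcal{H}_s$ only by the extra weight-$1$ tile $G_s$, and this tile contributes an additional diagonal of weight-$1$ label to the right of every earlier tile. Consequently, in the formula $\Omega(p_j, P) = \pm[n^+ - m^+ - n^- + m^-]$, the counts $n^\pm$ are unchanged in passing from $\mathcal{G}_s$ to $\mathcal{H}_s$ (since $G_s \notin Twist(P)$, the edges of $G_s$ in $P$ agree with those in $P_-$ and do not contribute to $P\cap E(G^+_{p_j})$ any extra weight-$1$ labeled edges), while $m^+$ increases by exactly $1$ in $\mathcal{G}_s$ versus $\mathcal{H}_s$ precisely when $p_j$ is a weight-$1$ tile. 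A short case analysis on whether $G_j$ has weight $1$ or $2$, and on the sign determined by $j \in P$ or $j \notin P$, then yields $\Omega(H_j, P') - \Omega(G_j, P) = \pm 1 = u(P) - u(\tilde P)$ when $G_j$ has weight $1$, and equality $0 = 0$ when $G_j$ has weight $2$. The main obstacle is this bookkeeping of signs in the case analysis, but it is mechanical once the $m^\pm$ versus $n^\pm$ behaviour under removal of the last tile is pinned down.
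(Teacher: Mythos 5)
Your proposal is correct and follows essentially the same route as the paper: the $\alpha$-identity is the same tile-by-tile comparison of $\alpha$-values over the common twist set, and your induction along a twist sequence from the minimal matchings, comparing the $\Omega$-increments step by step, is precisely the paper's ``same procedure'' argument, with the extra merit that you justify via the $m^{\pm}$ and $n^{\pm}$ counts (the last tile adds one weight-$1$ diagonal to every $G^{+}$, and contributes no relevant matched edges) why the increments differ by $\pm 1$ at weight-$1$ twists and agree at weight-$2$ twists, a point the paper asserts without detail. The only point you leave implicit --- that the twist sequence can be chosen so that all intermediate matchings avoid the last tile --- is also implicit in the paper and is harmless, since one can always remove tiles from the twist set one at a time.
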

\begin{proof}
	The correspondence is obvious by definition. We know $P$ and $P'$ have the same edge except for the last tile labeled 1. So the dimension vector of $P'$ is $(u,w)$. The twist tiles of $P$ and $P'$ are the same. Also, there are $u$ tiles labeled 1 and $w$ tiles labeled 2. The values of tiles labeled 1 differ by $-1$ and the values of tiles labeled 2 are the same. Therefore $\alpha(P)=\alpha(P')-u$.
	\par For the latter, let $P_-$ and $P'_-$ be the minimal perfect matchings of $\mathcal{G}_s$ and $\mathcal{H}_s$. Then $v(P)-v(P_-)$ and $v(P')-v(P'_-)$ have the same procedure: twist the tile labeled 1 $u$ times and labeled 2 $w$ times. When twisting at tiles labeled 1, they differ by $-1$ and when twisting at tiles labeled 2, they are the same. So $v(P)=v(P')-u$.
\end{proof}
\begin{lemma}
	Let $P$ be a perfect matching of $\mathcal{G}_s$ with dimension vector $(u,w)$ such that $Twist(P)$ contains the last tile. Then $P''$ is a perfect matching of $\mathcal{G}_{s-1}$ corresponding to $P$. In addition, $\alpha(P)=\alpha(P'')-u+w+1$ and $v(P)=v(P'')-u+w+1$.
\end{lemma}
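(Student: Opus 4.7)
The plan is to mirror the previous lemma, treating the case where the last tile of $\mathcal{G}_s$ lies in $\mathrm{Twist}(P)$ by a two-tile reduction rather than a one-tile reduction.

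First, I would pin down the correspondence. A combinatorial inspection of $P_-(\mathcal{G}_s)$ near the end of the snake graph shows that $P_-$ uses only a single edge on the last tile $T_s$, so no symmetric difference with $P_-$ can enclose $T_s$ in isolation; consequently $T_s \in \mathrm{Twist}(P)$ forces $T_{s-1} \in \mathrm{Twist}(P)$ as well. I would then define $P''$ as the matching of $\mathcal{G}_{s-1}$ whose enclosed-tile set corresponds to $\mathrm{Twist}(P) \setminus \{T_{s-1}, T_s\}$ under the index shift $T_i \leftrightarrow T'_{i+1}$ identifying the first $2s-1$ tiles of $\mathcal{G}_s$ with all tiles of $\mathcal{G}_{s-1}$. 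In particular $\dim P'' = (u-1, w-1)$.

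The $\alpha$-identity then reduces to bookkeeping. Under the shift $T_i \leftrightarrow T'_{i+1}$, every weight-$1$ tile gains $+1$ and every weight-$2$ tile loses $1$ in $\alpha$-value. Summing over the $u-1$ weight-$1$ and $w-1$ weight-$2$ common tiles yields a contribution of $-(u-1)+(w-1) = -u+w$ to $\alpha(P)-\alpha(P'')$, while the two removed tiles contribute $\alpha(T_s)+\alpha(T_{s-1}) = s + (1-s) = 1$ to $\alpha(P)$ alone. Combining gives $\alpha(P)-\alpha(P'') = 1 - u + w$, as required.

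For the $v$-identity I would follow the strategy of the previous lemma: produce a twist sequence from $P_-(\mathcal{G}_s)$ to $P$ that, under $T_i \leftrightarrow T'_{i+1}$, first reproduces a twist sequence from $P_-(\mathcal{G}_{s-1})$ to $P''$ and then performs two terminal twists, at $T_{s-1}$ and at $T_s$ (in that order, which is forced by the two-edge requirement, since $P_-$ has only one edge on $T_s$). For each common twist, the quantities $m^\pm$ and $n^\pm$ in $\mathcal{G}_s$ differ from those in $\mathcal{G}_{s-1}$ only through the two extra diagonals at $T_{s-1}$ and $T_s$; a direct case check using Definition \ref{omega} should show that each common weight-$1$ twist contributes $-1$ to $v(P)-v(P'')$ and each common weight-$2$ twist contributes $+1$, for a total of $-u+w$ from the common part. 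Since $v(P_-) = v(P''_-) = 0$, the identity $v(P) = v(P'') - u + w + 1$ then follows once the $\Omega$-contributions of the two terminal twists are shown to sum to $1$.

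The main obstacle is precisely this last step. I would compute $\Omega(T_{s-1},\cdot)$ and $\Omega(T_s,\cdot)$ directly from Definition \ref{omega}: at the moment of the terminal twists, the matching already agrees with the configuration obtained from $P''$ on the first $2s-1$ tiles, so the relevant $n^\pm$ and $m^\pm$ values for labels $1$ and $2$ collapse to a small explicit case, and the arithmetic should yield $\Omega(T_{s-1},\cdot) + \Omega(T_s,\cdot) = 1$, completing the proof.
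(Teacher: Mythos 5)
Your reduction step and your $\alpha$-computation are correct and essentially identical to the paper's: the paper likewise observes that $P$ and $P''$ agree except on the last two tiles, that the two removed tiles contribute $-(s-1)+s=1$, and that the index shift changes the $\alpha$-values of the $u-1$ weight-$1$ and $w-1$ weight-$2$ common tiles by $-1$ and $+1$ respectively. The genuine gap is in your treatment of $v$, and it comes precisely from your decision to perform the two terminal twists \emph{last}. With the last two tiles still carrying the minimal matching, the extra data seen by a common twist consists of one extra diagonal of each label together with the two matched edges of $T_{s-1}$, which are labelled $1$; Definition \ref{omega} then gives a difference of $+1$ for each common weight-$1$ twist and $-1$ for each common weight-$2$ twist — the opposite of what you assert — and correspondingly $\Omega(T_{s-1},\cdot)+\Omega(T_s,\cdot)$ evaluated at the end of the sequence is not the constant $1$ but depends on $P''$ through $n^{\pm}$. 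For instance in $\mathcal{G}_2$, take $Twist(P)$ to be the second tile from the left together with the last two tiles (so $u=1$, $w=2$): the common twist differs from its $\mathcal{G}_1$ counterpart by $-1$ (not $+1$), and the two terminal twists contribute $1+2=3$ (not $1$); your claimed values give the correct total $-u+w+1$ only because the two errors cancel, so carrying out the ``direct case check'' you defer would not close the argument as planned. Moreover, your ordering need not even exist: if $Twist(P)$ also contains the tile immediately preceding $T_{s-1}$ (e.g.\ the run of the last four tiles in $\mathcal{G}_2$), that tile can never be twisted while $T_{s-1},T_s$ still carry $P_-$, because the two minimal edges on $T_{s-1}$ occupy the shared vertices; so ``all common twists first, then the two terminal twists'' is not a legal twist sequence in general.

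The paper's proof avoids both difficulties by twisting the last two tiles \emph{first}: starting from $P_-(\mathcal{G}_s)$ this costs exactly $v(Q)=1$, the resulting matching restricted to the first $2s-1$ tiles is $P_-(\mathcal{G}_{s-1})$, and only then are the two twist sequences compared tile by tile; in that configuration the extra region carries two matched edges labelled $2$, so the per-twist differences really are $-1$ for weight-$1$ twists and $+1$ for weight-$2$ twists, giving $v(P)-1=v(P'')-(u-1)+(w-1)$. You should reorganize your $v$-argument in that order (or else prove the correct, $P''$-dependent values for your ordering, which are not the ones you state).
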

\begin{proof}
	The correspondence is obvious by definition. We know $P$ and $P''$ have the same edges except  for the last two tiles. So the dimension vector of $P''$ is $(u-1,w-1)$. The twist tiles of $P$ and $P''$ differ by the last two tiles and $\alpha$ maps these tiles to $-(s-1)+s$. Also there are $u-1$ tiles labeled 1 and $w-1$ tiles labeled 2. The values of tiles labeled 1 differ by $-1$ and the values of tiles labeled 2 differ by 1. Therefore $\alpha(P)=\alpha(P'')+1-(u-1)+w-1=\alpha(P'')-u+w+1$.
	\par For the latter, let $P_-$ and $P''_-$ be the minimal perfect matchings of $\mathcal{G}_s$ and $\mathcal{G}_{s-1}$. Then let $Q$ be the perfect matching obtained from $P_-$ by twisting at the last two tiles. By computation, $v(Q)=1$. Then $v(P)-v(Q)$ and $v(P'')-v(P''_-)$ have the same procedure except the last two tiles: twist the tile labeled 1 $u-1$ times and labeled 2 $w-1$ times. When twisting at tiles labeled 1, they differ by $-1$ and when twisting at tiles labeled 2, they differ by $1$. 
	So $v(P)-1=v(P'')-(u-1)+w-1$.
\end{proof}
$\textbf{Proof of (*)}$: Induction on the number of tiles. Assume that $\mathcal{H}_{s}$ and $\mathcal{G}_{s-1}$ satisfy the above.
\begin{align*}
	\sum_{P:dimP=(u,w)}q^{\frac{\alpha(P)}{2}}&=\sum_{1\notin Twist(P)}q^{\frac{\alpha(P)}{2}}+\sum_{1\in Twist(P)}q^{\frac{\alpha(P)}{2}} \\
	&=\sum_{P'}q^{\frac{\alpha(P')-u}{2}}+\sum_{P''}q^{\frac{\alpha(P'')-u+w+1}{2}} \\
	&=\sum_{P'}q^{\frac{v(P')-u}{2}}+\sum_{P''}q^{\frac{v(P'')-u+w+1}{2}} \\
	&=\sum_{1\notin Twist(P)}q^{\frac{v(P)}{2}}+\sum_{1\in Twist(P)}q^{\frac{v(P)}{2}} \\
	&=\sum_{P:dimP=(u,w)}q^{\frac{v(P)}{2}}
\end{align*}
\subsection{In $\mathcal{H}_s$}Let $P$ be a perfect matching of $\mathcal{H}_s$. If $Twist(P)$ contains the last tile, then it can be seen as a perfect matching of $\mathcal{G}_{s-1}$; if $Twist(P)$ does not contain the last tile, then it can be seen as a perfect matching of $\mathcal{H}_{s-1}$. The snake graph and the values of $\alpha$ are shown in Figure \ref{hs}. They are the snake graph of $\mathcal{H}_s,\mathcal{G}_{s-1},\mathcal{H}_{s-1}$ in order, the edges of the minimal perfect matching are shown in blue and the values of $\alpha$ are on the edges.
\begin{figure}[htbp]
	\centering
	\includegraphics[scale=0.1]{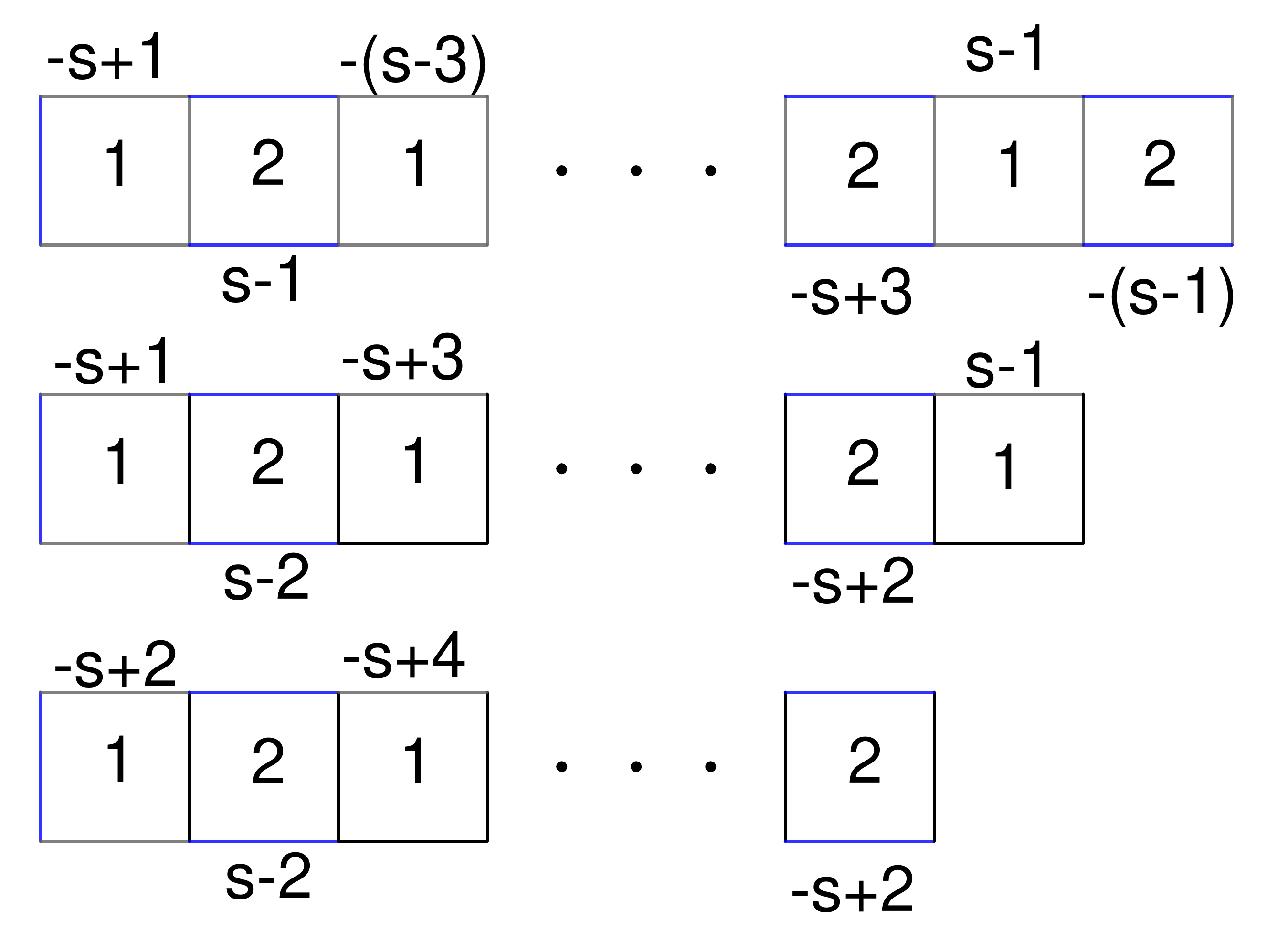}
	\caption{Snake graph and values of $\alpha$}
	\label{hs}
\end{figure}
\begin{lemma}
	Let $P$ be a perfect matching of $\mathcal{H}_s$ with dimension vector $(u,w)$ such that $Twist(P)$ contains the last tile. Then $P'$ is a perfect matching of $\mathcal{G}_{s-1}$ corresponding to $P$. In addition, $\alpha(P)=\alpha(P')-s+w$ and $v(P)=v(P')+s-w$.
\end{lemma}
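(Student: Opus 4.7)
The proof will follow the template of the preceding lemma for $\mathcal{G}_s$ with last tile in $Twist(P)$, adapted to the pair $\mathcal{H}_s$ and $\mathcal{G}_{s-1}$. First I will set up the correspondence: physically, $\mathcal{G}_{s-1}$ is the sub-snake-graph of $\mathcal{H}_s$ obtained by deleting the last tile $H_{s-1}$ (weight $2$), and tiles are identified by $H_i\leftrightarrow G_{i+1}$. Given $P$ with $H_{s-1}\in Twist(P)$, I define $P'$ to be the unique matching of $\mathcal{G}_{s-1}$ whose twist set is $Twist(P)\setminus\{H_{s-1}\}$; since $H_{s-1}$ is a single weight-$2$ tile, this makes the dimension vector of $P'$ equal to $(u,w-1)$.

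For the $\alpha$-identity I compare tile-values under $H_i\leftrightarrow G_{i+1}$. Weight-$1$ tiles satisfy $\alpha(H_i)=i+1=\alpha(G_{i+1})$, while weight-$2$ tiles satisfy $\alpha(H_i)=-i=\alpha(G_{i+1})+1$. The twisted tiles of $P'$ consist of $u$ weight-$1$ and $w-1$ weight-$2$ tiles, so the common portion of the sum defining $\alpha(P)$ equals $\alpha(P')+(w-1)$. Adding $\alpha(H_{s-1})=-(s-1)$ for the extra twisted tile gives $\alpha(P)=\alpha(P')+(w-1)-(s-1)=\alpha(P')-s+w$.

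For the $v$-identity I mimic the strategy of the preceding lemma. Let $Q$ be the matching of $\mathcal{H}_s$ obtained from $P_-(\mathcal{H}_s)$ by the unique sequence of twists that puts $H_{s-1}$ alone into the twist set, and compute $v(Q)$ by a direct application of Definition~\ref{omega}. Then the sequences of twists $Q\to P$ in $\mathcal{H}_s$ and $P'_-\to P'$ in $\mathcal{G}_{s-1}$ act on precisely the same common tiles, namely $u$ weight-$1$ and $w-1$ weight-$2$ tiles. At each such step the $\Omega$-value in $\mathcal{H}_s$ differs from its counterpart in $\mathcal{G}_{s-1}$ by a controlled shift coming from the extra tile $H_{s-1}$, which alters the counts $n^{\pm}$ and $m^{\pm}$ of Definition~\ref{d1} in a way that depends only on the twisted tile's weight. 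Summing these per-twist shifts and combining with $v(Q)$ should yield $v(P)-v(P')=s-w$.

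The main obstacle is the $v$-calculation: correctly identifying $Q$, evaluating $v(Q)$, and tallying the per-twist $\Omega$-shifts. Unlike the preceding $\mathcal{G}_s$ lemma, the identities $\alpha(P)=\alpha(P')-s+w$ and $v(P)=v(P')+s-w$ have opposite signs, so the per-twist $v$-shifts cannot simply mirror the per-tile $\alpha$-shifts; instead they must combine with $v(Q)$ to flip the sign. This sign asymmetry reflects the fact that $\mathcal{H}_s$ does not correspond to a cluster variable, and verifying the $v$-identity will require a careful case-by-case analysis of how appending $H_{s-1}$ to the end of the snake graph changes $\Omega$ at each preceding twist.
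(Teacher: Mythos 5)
Your construction of the correspondence and your proof of the $\alpha$-identity are correct and agree with the paper: under $H_i\leftrightarrow G_{i+1}$ the weight-$1$ values coincide and the weight-$2$ values of $\mathcal{H}_s$ exceed those of $\mathcal{G}_{s-1}$ by $1$, so summing over the $u$ weight-$1$ and $w-1$ weight-$2$ common twisted tiles and adding $\alpha(H_{s-1})=-(s-1)$ gives $\alpha(P)=\alpha(P')+(w-1)-(s-1)=\alpha(P')-s+w$. Your strategy for the $v$-identity is also exactly the paper's: pass through the matching $Q$ whose twist set is the last tile alone, evaluate $v(Q)$, and compare the per-twist $\Omega$-values along the parallel twist sequences $Q\to P$ in $\mathcal{H}_s$ and $P'_-\to P'$ in $\mathcal{G}_{s-1}$.

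The gap is that for the $v$-identity you stop precisely where the content of the lemma lies: you neither evaluate $v(Q)$ nor determine the per-twist shifts, concluding only that the bookkeeping ``should yield'' $v(P)-v(P')=s-w$ and explicitly deferring the case analysis. The paper's proof consists of exactly the two computations you defer: $Q$ is obtained from $P_-(\mathcal{H}_s)$ by a single twist at the last tile and a direct computation with Definition \ref{omega} gives $v(Q)=s-1$; and along the parallel sequences the $\Omega$-values agree at every twist on a tile with diagonal $1$ and differ by $-1$ at every twist on a tile with diagonal $2$ (the extra tile $H_{s-1}$ lies in the subgraph $G^{+}$ of each such tile and carries diagonal label $2$, so it shifts the count $m^{+}(\tau_2)$ by one while contributing no edges labelled $2$). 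With these values, $v(P)=v(Q)+\bigl(v(P')-v(P'_-)\bigr)-(w-1)=v(P')+s-w$, using $v(P'_-)=0$. Your remark about the sign asymmetry is consistent with this outcome (the extra-tile contributions $-(s-1)$ for $\alpha$ and $+(s-1)$ for $v$, and the per-tile shifts $+1$ versus $-1$ on diagonal-$2$ tiles, are negatives of one another), but until you actually carry out the evaluation of $v(Q)$ and the per-twist comparison, the second identity is asserted rather than proved.
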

\begin{proof}
	The correspondence is obvious by definition. We know $P$ and $P'$ have the same edges except for the last tile labeled 1. So the dimension vector of $P'$ is $(u,w-1)$. The twist tiles of $P$ and $P'$ differ by the last tile and $\alpha$ maps this tile to $-s+1$. Also there are $u$ tiles labeled 1 and $w-1$ tiles labeled 2. The values of tiles labeled 1 are the same and the values of tiles labeled 2 differ by $1$. Therefore $\alpha(P)=\alpha(P')-s+1+w-1$.
	\par For the latter, let $P_-$ and $P'_-$ be the minimal perfect matchings of $\mathcal{H}_s$ and $\mathcal{G}_{s-1}$. Then let $Q$ be the perfect matching obtained from $P_-$ by twisting at the last tile. By computation, $v(Q)=s-1$. Then $v(P)-v(Q)$ and $v(P')-v(P'_-)$ have the same procedure: twist the tile labeled 1 $u$ times and labeled 2 $w-1$ times. When twisting at tiles labeled 1, they are the same and when twisting at tiles labeled 2, they differ by $-1$. So $v(P)=v(P')+s-1-(w-1)$.
\end{proof}
\begin{lemma}
	Let $P$ be a perfect matching of $\mathcal{H}_s$ with dimension vector $(u,w)$ such that $Twist(P)$ does not contain the last tile. Then $P''$ is a perfect matching of $\mathcal{H}_{s-1}$ corresponding to $P$. In addition, $\alpha(P)=\alpha(P'')-u+w$ and $v(P)=v(P'')+u-w$.
\end{lemma}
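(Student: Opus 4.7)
The strategy parallels the preceding lemma. I would carry out three steps in order: set up the correspondence $P \mapsto P''$ by restriction of $P$ to the first $2s-2$ tiles of $\mathcal{H}_s$; compare $\alpha(P)$ with $\alpha(P'')$ via the reindexing of tiles; and compare $v(P)$ with $v(P'')$ by transporting a twist sequence from $P''_- \to P''$ to $P_- \to P$ in $\mathcal{H}_s$.

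For the correspondence, since the last tile of $\mathcal{H}_s$ (weight $2$) is not in $Twist(P)$, the matching $P$ coincides with $P_-$ on this tile. Inspecting Figure~\ref{hs}, the edges of $P_-$ on the last tile force a specific choice on the edge shared with the second-to-last tile, with the effect that the second-to-last tile (weight $1$) is also absent from $Twist(P)$. The restriction of $P$ to the first $2s-2$ tiles is therefore a well-defined perfect matching $P''$ of $\mathcal{H}_{s-1}$, and under the natural tile identification $Twist(P) = Twist(P'')$, so $\dim P'' = (u,w)$.

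For the $\alpha$-comparison, the identification sends $H_i \in \mathcal{H}_s$ to $H_{i+1} \in \mathcal{H}_{s-1}$, shifting indices by $+1$. From the definitions $\alpha(H_i) = i+1$ for weight-$1$ tiles and $\alpha(H_i) = -i$ for weight-$2$ tiles, a tile's $\alpha$-value increases by $+1$ upon passage from $\mathcal{H}_s$ to $\mathcal{H}_{s-1}$ if it has weight $1$, and decreases by $1$ if it has weight $2$. Summing over the $u$ weight-$1$ and $w$ weight-$2$ tiles in $Twist(P) = Twist(P'')$ yields $\alpha(P'') - \alpha(P) = u - w$, equivalent to $\alpha(P) = \alpha(P'') - u + w$.

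For the $v$-comparison, any twist sequence $P''_- \to P''$ lifts to the same-labeled sequence $P_- \to P$ in $\mathcal{H}_s$, since $P_-$ restricts to $P''_-$ and the lifted sequence touches no tile beyond the first $2s-2$. At each step, a twist on a tile of weight $\tau_k$ contributes an increment $\Omega$ whose value in $\mathcal{H}_s$ differs from its counterpart in $\mathcal{H}_{s-1}$ only through corrections induced by the two extra rightmost tiles of $\mathcal{H}_s$ (one weight $1$, one weight $2$), which shift the counts $m^+$ and $n^+$ appearing in Definition~\ref{omega}. A case analysis via the last-two-tile formulas for $n^\pm$ shows that the net correction is $+1$ at each weight-$1$ twist and $-1$ at each weight-$2$ twist, summing along the sequence to $v(P) - v(P'') = u - w$. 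The principal obstacle is precisely this global bookkeeping for $v$: since $\Omega$ depends on counts across the entire right tail, one must confirm that the two extra tiles contribute the claimed fixed offset at every intermediate twist irrespective of the twisting order.
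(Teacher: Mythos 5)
Your proposal follows essentially the same route as the paper's proof: the same restriction correspondence $P\mapsto P''$ preserving the twist set and dimension vector, the same index-shift computation giving $\alpha(P)=\alpha(P'')-u+w$, and the same comparison of $\Omega$-increments along a transported twist sequence yielding $v(P)-v(P'')=u-w$. The per-twist offsets ($+1$ at weight-$1$ tiles, $-1$ at weight-$2$ tiles) that you flag as the remaining bookkeeping are likewise asserted without detailed case analysis in the paper, and since $v$ is well defined it suffices to verify them for one convenient ordering of the twists.
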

\begin{proof}
	The correspondence is obvious by definition. We know $P$ and $P''$ have the same edges except for the last two tiles. So the dimension vector of $P''$ is $(u,w)$. The twist tiles of $P$ and $P''$ are the same and there are $u$ tiles labeled 1 and $w$ tiles labeled 2. The values of tiles labeled 1 differ by $-1$ and the values of tiles labeled 2 differ by 1. Therefore $\alpha(P)=\alpha(P'')-u+w$.
	\par For the latter, let $P_-$ and $P''_-$ be the minimal perfect matchings of $\mathcal{G}_s$ and $\mathcal{G}_{s-1}$. Then $v(P)-v(P_-)$ and $v(P'')-v(P''_-)$ have the same procedure: twist the tile labeled 1 $u$ times and labeled 2 $w$ times. When twisting at tiles labeled 1, they differ by $1$ and when twisting at tiles labeled 2, they differ by $-1$. 
	So $v(P)-1=v(P'')+u-w$.
\end{proof}
$\textbf{Proof of (*)}$: Induction on the number of tiles. Assume that $\mathcal{G}_{s-1}$ and $\mathcal{H}_{s-1}$ satisfy the above.
\begin{align*}
	\sum_{P:dimP=(u,w)}q^{\frac{\alpha(P)}{2}}&=\sum_{2\in Twist(P)}q^{\frac{\alpha(P)}{2}}+\sum_{2\notin Twist(P)}q^{\frac{\alpha(P)}{2}} \\
	&=\sum_{P'}q^{\frac{\alpha(P')-s+w}{2}}+\sum_{P''}q^{\frac{\alpha(P'')-u+w}{2}} \\
	&=\sum_{P'}q^{\frac{v(P')-s+w}{2}}+\sum_{P''}q^{\frac{v(P'')-u+w}{2}} \\
	&=\sum_{P'}q^{\frac{-v(P')-s+w}{2}}+\sum_{P''}q^{\frac{-v(P'')-u+w}{2}} \\
	&=\sum_{2\in Twist(P)}q^{-\frac{v(P)}{2}}+\sum_{2\notin Twist(P)}q^{-\frac{v(P)}{2}} \\
	&=\sum_{P:dimP=(u,w)}q^{-\frac{v(P)}{2}} \\
	&=\sum_{P:dimP=(u,w)}q^{\frac{v(P)}{2}}
\end{align*}
Here we use the bar-invariant property of the quantum cluster variables.

{\small		
	}

\end{document}